\definecolor{myBlue}{rgb}{0.0,0.0,0.55}
\definecolor{green}{rgb}{0.0,0.7,0.2}
  \newcounter{mnote}
  \let\oldmarginpar\marginpar
    \renewcommand\marginpar[1]{\-\oldmarginpar[\raggedleft\footnotesize #1]%
    {\raggedright\footnotesize #1}}
\newtheorem{theorem}{Theorem}[section]
\newtheorem{lemma}[theorem]{Lemma}
\newtheorem{remark}[theorem]{Remark}
\newcommand{\dt}{\,\Delta t}
\newcommand{\dd}{\,{\rm d}}
\newcommand{\bs}{\boldsymbol}
\newcommand{\la}{\langle}
\newcommand{\ra}{\rangle}
\newcommand{\vertiii}[1]{{\left\vert\kern-0.25ex\left\vert\kern-0.25ex\left\vert #1 
    \right\vert\kern-0.25ex\right\vert\kern-0.25ex\right\vert}}
\newcommand{\curl}{{\rm curl\,}}
\renewcommand{\div}{\operatorname{div}}
\newcommand{\grad}{{\rm grad\,}}
\DeclareMathOperator*{\tr}{tr}
\DeclareMathOperator*{\rot}{rot}
\begin{document}
\title{Energy-preserving mixed finite element methods for the Hodge wave equation}
\author{Yongke Wu and Yanhong Bai}
\date{\today}
\thanks{Y. Wu was supported by the National Natural Science Foundation of China (11971094 and 11501088). Y. Bai was supported by the National Natural Science Foundation of China (11701481).}

\address[Y.~Wu]{School of Mathematical Sciences, University of Electronic Science and Technology of China, Chengdu 611731, China.}
\email{wuyongke1982@sina.com}

\address[Y.~Bai]{School of Science, Xihua University, Chengdu 610039, China.}
\email{baiyanhong1982@126.com}

\subjclass[2010]{
65M60; 
65M12; 
65J08
}

\begin{abstract}
Energy-preserving numerical methods for solving the Hodge wave equation is developed in this paper. Based on the de Rham complex, the Hodge wave equation can be formulated as a first-order system and mixed finite element methods using finite element exterior calculus is used to discretize the space. A continuous time Galerkin method, which can be viewed as a modification of the Crank-Nicolson method, is used to discretize the time which results in a full discrete method preserving the energy exactly when the source term is vanished. A projection based operator is used to establish the optimal order convergence of the proposed methods. Numerical experiments are present to support the theoretical results. 
\end{abstract}

\keywords{the Hodge wave equation,~energy conservation,~de Rham complex,~optimal error estimates}

\maketitle

\section{Introduction}

We consider energy-preserving numerical methods for solving the Hodge wave equation, the hyperbolic equation in $\mathbb R^n$ associated to the Hodge Laplacian of differential $k$-forms for $0\leq k \leq n$. The initial-boundary value problem we study is: Find $u:\ (0,T]\mapsto H_{0}\Lambda^k(\Omega)$ satisfying 
\begin{align}\label{eq:hodgewave-1}
u_{tt} + (\dd\delta +\delta \dd) u  & = f \qquad \text{in }\Omega \times (0,T], 
\end{align}
with homogeneous boundary conditions
\begin{align} 
\label{eq:hodgewave-2}
\tr(u) = 0,\quad \tr(\star \dd u)& = 0\qquad \text{on } \partial\Omega \times (0,T], 
\end{align}
and initial conditions
\begin{align}
\label{eq:hodgewave-3}
u(\cdot,0) = u_0(\cdot),\quad u_t(\cdot,0) & = u_1(\cdot)\qquad  \text{in } \Omega.
\end{align}
Here $\Omega \subset \mathbb R^n$ is a domain homomorphism to a ball with piecewise smooth and Lipschitz boundary. The unknown $u$ is a time dependent differential $k$-form on $\Omega$, $u_t$ and $u_{tt}$ denote its partial derivatives with respect to time variable, and $\dd$, $\delta$, $\star$, and $\tr$ denote exterior derivative, co-derivative, Hodge star, and the trace operator, respectively; see Section \ref{sec:pr} for precise definitions. We assume that $T$ is a finite positive real number denoting the ending time. 
%

Many physical problems can be described by \eqref{eq:hodgewave-1}, such as the mathematical models of sound waves ($n = 3$ and $k = 0$), electromagnetic waves ($n = 3$ and $k = 1$), structural vibration ($n = 3$ and $k = 2$) and so on. There are many theoretical analyses of finite element method for \eqref{eq:hodgewave-1} in the special case $n = 2$ or $3$ and $k = 0$ or $k = n-1$; see \cite{Dupont1973L2,Baker1976Error,geveci1988application,Cowsat1990A,French1996A,Rivi2003A,Glowinski2006A,Jenkins2007Numerical,Chung2009OPTIMAL,Kirby_2014,Griesmaier2014} and the references therein. The pioneer work on mixed finite element methods \cite{Brezzi;Fortin1991} for the general form of the Hodge wave equation \eqref{eq:hodgewave-1} can be found in Quenneville-B\'elair's Ph.~D.~thesis \cite{Quenneville-Belair2015}; see also \cite{Arnold2018}. In this work, he has presented 
(1) the abstract Hodge wave equation in the mixed form,
(2) the semi-discretization in space for solving the Hodge wave equation
(3) the existence and uniqueness of the solution for the semi-discretization in space, 
(4) the error estimates in the $\|\cdot\|_{L^\infty(L^2)}$ norm for the semi-discretization in space based on the elliptic projection operator. 

In the present work, we shall give more thorough analysis of the mixed finite element method developed in \cite{Quenneville-Belair2015,Arnold2018}. Introduce a $(k - 1)$-form $\sigma = \delta u$ and a $(k+1)$-form $\omega = \dd u$ with standard modification for $k = 0$ or $k = n$, and a $k$-form $\mu = u_{t}$.
The first order formulation of \eqref{eq:hodgewave-1} reads as: find $\sigma \in H_{0}\Lambda^{-}$, $\mu \in H_{0}\Lambda$, and $\omega \in H_{0}\Lambda^{+}$ 
such that
\begin{align}
\label{eq:1}
\langle \sigma_{t},\tau\rangle - \langle\dd^{-}\tau,\mu\rangle  & = 0\qquad\qquad\forall~~\tau \in H_{0}\Lambda^{-}, \\
\label{eq:2}
\langle \mu_{t},v\rangle + \langle \dd^{-}\sigma,v\rangle + \langle \omega,\dd v\rangle & = \langle f,v\rangle\,\qquad \forall~~v\in H_{0}\Lambda, \\
\label{eq:3}
\langle \omega_{t},\phi\rangle - \langle \dd \mu,\phi \rangle & = 0\qquad\qquad \forall~~\phi \in H_{0}\Lambda^{+},
\end{align}
with initial conditions 
$$
\sigma_{0} = \delta u_{0},\quad \mu_{0} = u_{1},\quad \omega_{0} = \dd u_{0}.
$$
Comparing with \cite{Quenneville-Belair2015}, the main contributions of this paper are as follows. Firstly, we use the skew-symmetric property of the formulation \eqref{eq:1}-\eqref{eq:3} to get  the following energy estimates
\begin{align}
\label{Establity} \sup\limits_{0 \leq t \leq T}E(t) & \leq E(0) + 2\int_0^T\|f(\cdot,s)\|\dd s, \\
\label{Hstablity} \sup\limits_{0 \leq t \leq T} H(t) & \leq H(0) + 4\|f\|_{L^\infty(L^2)} + 2\int_0^T\|f_t(\cdot,s)\|\dd s,
\end{align}
with
\begin{align*}
E(t) &= (\|\sigma(\cdot,t)\|^2 + \|\mu(\cdot,t)\|^2 + \|\omega(\cdot,t)\|^2)^{1/2},\\
H(t) & = (\|\dd^-\sigma(\cdot,t)\|^2 + \|\dd\mu(\cdot,t)\|^2 + \|\delta \mu(\cdot,t)\|^2 + \|\delta^+\omega(\cdot,t)\|^2)^{1/2}.
\end{align*}
These energy estimates imply the existence and uniqueness of solution for \eqref{eq:1}-\eqref{eq:3}; see Remark \ref{re:uni-exi-solu}. When \eqref{eq:1}-\eqref{eq:3} is self-conserve, i.e., $f = 0$, the inequality \eqref{Establity}-\eqref{Hstablity} become equalities which implies the energies $E$ and $H$ are preserved exactly; see Remark \ref{re:en-pre-exact}. 
Due to the structure preserving properties of the finite element exterior calculus (FEEC)~\cite{Arnold;Falk;Winther2006,Arnold;Falk;Winther2010,Arnold2018}, the semi-discretization in space also inherit the skew-symmetric property of the spatial differential terms, and thus the energy conservation is preserved naturally. We then use the continuous time Galerkin method~\cite{French1996A} to give unconditioned energy conservation schemes. Here 
we follow the approach in  \cite{geveci1988application,French1996A,Kirby_2014}, where the energy estimates has been derived for scalar wave equations but not for Hodge wave equations. As we know, energy conservation numerical schemes can have a crucial influence on the quality of the numerical simulations. Especially, in long-time simulations, energy-preserving can have a dramatic effect on stability and global error growth. 

Secondly, we obtain the optimal convergence order of the error estimates in both $L^2$-norm and $\|\mathcal A(\cdot)\|_{L^\infty(L^2)}$-norm for the semi- and full-discrete mixed finite element methods, where $\mathcal A$ is a skew-symmetric operator defined in Section \ref{sec:pr}. Such result has been derived for scalar wave equation \cite{geveci1988application,French1996A,Kirby_2014} but generalization to general Hodge wave equation is non-trivial. Technically, the canonical interpolation operators $\pi_{h}$ used in \cite{geveci1988application,French1996A,Kirby_2014} cannot be commutated with the discrete co-derivative operator $\delta_{h}$, and the $L^{2}$ projection operator $Q_{h}$ cannot be commutated with the exterior derivative operators $\dd$. Using these standard operators in the convergence analysis will lead to the lost of the convergence order. To overcome this difficulty, we choose a projection based interpolation operator $I_{h}$ briefly mentioned in \cite[Proposition 5.44]{Christiansen2011} and redefine it based on the Hodge decomposition. Such projection based operators has been introduced for $H^1, H(\curl)$ and $H(\div)$ spaces in \cite{Demkowiz2000,Demkowiz2003,Demkowiz2005,Oden1989}, where the authors have proved that these projection based operators made the de Rahm diagram commute and had the quasi-optimal interpolation error bound for $hp$ finite element spaces. Although this projection-based quasi-interpolation operator is not new, the properties we are going to prove are not fully explored in the literature. Specifically, we shall prove that (1) $I_h$ is commuted with $\delta_{h}$, (2) $I_{h}$ is stable in both $\|\dd(\cdot)\|$ and $\|\delta_{h}(\cdot)\|$ norms, (3) $I_{h}$ is the $L^{2}$ orthogonal projection to the space $\mathfrak Z_{0,h}$, (4) $I_{h}$ is an orthogonal projection operator with respect to the inner-product $\la\dd(\cdot),\dd(\cdot)\ra$, (5) $I_{h}$ has the same approximation properties as the classical interpolation operators; see Lemma \ref{eq:I-h-orth} - \ref{lem:app-I-h}. 
By using these properties of the projection-based operator $I_h$, we get the optimal error estimates for both the semi- and full- discretization with respect to both $\|\cdot\|_{L^\infty(L^2)}$ and $\|\mathcal A(\cdot)\|_{L^\infty(L^2)}$ norms (the detail definition of these norms can be found in Section \ref{sec:semi}), while recall that \cite{Quenneville-Belair2015} only give the error estimates of $\|\cdot\|_{L^\infty(L^2)}$ for the semi-discretization in space and as the line of Quenneville's proof, it seems difficulty to get the error estimate of the energy norm $\|\mathcal A(\cdot)\|_{L^\infty(L^2)}$. But the control of the energy norm is very important, since the $L^2$-norm is possible small but the energy norm is larger due to the small oscillation in the error. Furthermore our error estimate, comparing with \cite{Quenneville-Belair2015}  is robust to $T$ in the sense that the factor $T$ is absent on the error estimates; see Theorem \ref{the:err-semi-L2}, \ref{the:err-semi-A}, \ref{the:err-full-L2} and \ref{the:err-full-A}. Such error estimates imply that our algorithms are robust for long time problems and the numerical experiment supports this result; see Table \ref{EX1-3}.

What remains of this paper is organized as follow. In Section 2 we introduce the required background on finite element exterior calculus (FEEC) and the Hodge wave equation. We obtain the mixed formulation of the Hodge wave equation and get the energy conservation estimates. Section 3, we briefly introduce the finite element spaces on $k$-forms, give the semi-discrete form of the Hodge wave equation, introduce a projection-based quasi-interpolation operator and explore properties of this operator, obtain the energy estimates of the semi-discrete form, and get the optimal error estimates of the semi-discrete form. In section 4, the full-discrete form of the Hodge wave equation is obtained, the energy estimates and the optimal error estimates are obtained. Section 5 give some numerical experiments to confirm our theoretical results.  

Throughout this paper, $i$, $h$ and $\dt$ denote the time level, the mesh size and the time step size, respectively. The capital $C$ may be different in different places, denotes a positive constant which is independent on $i$, $h$ and $\dt$. We denote by $\|\cdot\|_{m,p}$ the norm of the classical Sobolev spaces $W^{m,p}\Lambda^{k}(\Omega)$, $1\leq p \leq \infty$ and $0\leq k \leq n$. If $ p = 2$, we write $\|\cdot\|_{m,p}$ simply as $\|\cdot\|_{m}$ and denote by $|\cdot|_{m}$ the semi-norm in $W^{m,2}\Lambda^{k}(\Omega)$. In addition, for any Sobolev space $Y$, we define the space $L^{p}([a,b],Y)$ with norm $\|f\|_{L^{p}(Y)} = \left(\int_{a}^{b}\|f(\cdot,t)\|_{Y}^{p} \dd t \right)^{1/p} $, and if $p = \infty$, the integral is replaced by the essential supremum.

\section{Preliminaries}\label{sec:pr}
In this section, we follow the convention of \cite{Arnold;Falk;Winther2006,Arnold;Falk;Winther2010,Arnold2018} to introduce necessary background of finite element exterior calculus. Then, we introduce the Hodge wave equation and its mixed formulation. Finally, we get the energy conservation estimates for this mixed form.

\subsection{de Rham complex}
Let $\Omega \subset \mathbb R^n$ ($n \geq 2$) be a bounded Lipschitz domain. For a given integer $0\leq k \leq n$, $\Lambda^k(\Omega)$ represents the linear space of all smooth $k$-forms on $\Omega$. For any $\omega \in \Lambda^k(\Omega)$, $\omega$ can be written as
$$
\omega = \sum\limits_{1\leq \sigma_1 < \cdots<\sigma_k \leq n} a_\sigma \dd x^{\sigma_1} \wedge \cdots \wedge\dd x^{\sigma_k},
$$
with $a_\sigma \in C^\infty(\Omega)$ and $\wedge$ the wedge product. As $\Omega$ is a flat domain in $\mathbb R^n$, we can identify each tangent space of $\Omega$ with $\mathbb R^n$. Given an $\omega \in \Lambda^k(\Omega)$ and vectors $v_1,~v_2,\cdots,~v_k \in \mathbb R^{n}$, we have that the map $\bs x \in \Omega \mapsto \omega_{\bs x}(v_{1},v_{2},\cdots,v_{k}) \in \mathbb R$ is a smooth map (infinitely differentiable).


We define the exterior derivative  $\dd^{k}:~\Lambda^{k}(\Omega) \rightarrow \Lambda^{k+1}(\Omega)$ as
$$
\dd^{k}\omega_{x}(v_{1},v_{2},\cdots,v_{k+1}) = \sum\limits_{j = 1}^{k+1}(-1)^{j+1}\partial_{v_{j}}\omega_{x}(v_{1},\cdots,\hat{v}_{j},\cdots,v_{k+1}),
$$
where the hat is used to indicate a suppressed argument. By the definition of $\dd^{k}$, it is easy to see that $\dd^{k}$ is a sequence of differential operators satisfying that the range of $\dd^{k}$ lies in the domain of $\dd^{k+1}$, i.e., $\dd^{k+1} \circ \dd^{k} = 0$ for $k = 0,1,\cdots,n-1$. For convenience of notation, we shall skip the superscript $k$ if there is no confusion. 

Let $\text{vol}$ be the unique volume form in $\Lambda^{k}(\Omega)$, define the $L^{2}$-inner product of any two differential $k$-forms on $\Omega$ as the integral of their pointwise inner product:
$$
\la\omega,\mu\ra = \int_{\Omega}\la\omega_{x},\mu_{x}\ra\text{vol}.
$$
The completion of $\Lambda^{k}(\Omega)$ under the corresponding norm defines the Hilbert space $L^{2}\Lambda^{k}(\Omega)$. The domain of the exterior derivative $\dd^{k}$ can be enlarged to
$$
H\Lambda^{k}(\Omega) = \{\omega \in L^{2}\Lambda^{k}(\Omega):~\dd\omega \in L^{2}\Lambda^{k+1}(\Omega) \}.
$$
$H\Lambda^{k}(\Omega)$ is a Hilbert space with inner product $\langle\omega,\mu\rangle + \langle \dd\omega,\dd\mu\rangle$ and associated graph norm $\|\cdot\|_{H\Lambda}$. The de Rham complex
\begin{equation}\label{eq:deRham-1}
\begin{CD}
H\Lambda^{0}(\Omega)@>{\dd}>>  H\Lambda^{1}(\Omega)  @>{\dd}>> \cdots @>{\dd}>> H \Lambda^{n-1}(\Omega)@>{\dd}>> H\Lambda^{n}(\Omega) 
\end{CD}
\end{equation}
is then bounded in the sense that $\dd:~H\Lambda^{k}(\Omega) \rightarrow H\Lambda^{k+1}(\Omega)$ is a bounded operator.

For any smooth manifold $M$ and any $x \in M$, we use $T_{x}M$ to denote the tangential space of $M$ at $x$. For any smooth $k$-form $\omega \in \Lambda^{k}(\Omega)$, we define $\tr\omega \in \Lambda^{k}(\partial M)$ as
$$
\tr\omega(v_{1},v_{2},\cdots,v_{k}) = \omega(v_{1},v_{2},\cdots,v_{k})
$$
for tangential vectors $v_{i} \in T_{x}\partial M \subset T_{x}M$ ($i = 1,2,\cdots,k$). This operator can be extended continuous to Lipschitz domain $\Omega$, also denote by $\tr:~H^{1}\Lambda^{k}(\Omega) \rightarrow H^{1/2}\Lambda^{k}(\partial\Omega)$ and $\tr:~H\Lambda^{k}(\Omega) \rightarrow H^{-1/2}\Lambda^{k}(\partial \Omega)$. Define
\begin{align*}
H_{0}\Lambda^{k}(\Omega) & = \{ \omega \in H\Lambda^{k}(\Omega):~\tr\omega = 0\text{ on }\partial\Omega \}, \\
H_{0}^{1}\Lambda^{k}(\Omega) & = \{ \omega \in H^{1}\Lambda^{k}(\Omega):~\tr\omega = 0\text{ on }\partial\Omega \}.
\end{align*}
In the following sections, we will focus on the de Rham complex with homogeneous trace
\begin{equation}\label{eq:deRham-prime}
\begin{CD}
H_{0}\Lambda^{0}(\Omega)@>{\dd}>> H_{0}\Lambda^{1} (\Omega) @>{\dd}>>\cdots @>{\dd}>> H_0\Lambda^{n-1} (\Omega)@>{\dd}>> H_0\Lambda^n(\Omega) 
\end{CD}
\end{equation}

In order to define the dual complex, we start with the Hodge star operator $\star:~\Lambda^{k}(\Omega) \rightarrow \Lambda^{n-k}(\Omega)$,
$$
\int_{\Omega}\omega \wedge \mu = \langle \star\omega,\mu\rangle,\qquad \forall~~\omega \in \Lambda^{k}(\Omega),~~\mu\in \Lambda^{n-k}(\Omega).
$$
The coderivative operator $\delta^{k}:~\Lambda^{k}(\Omega) \rightarrow \Lambda^{k-1}(\Omega)$ is defined as
$$
\delta^{k} \omega = (-1)^{k(n-k+1)}\star \dd^{n-k} \star \omega.
$$
$\dd^{k-1}$ and $\delta^{k}$ are related by the Stokes theorem
$$
\langle \dd\omega,\mu\rangle = \langle \omega,\delta\mu\rangle + \int_{\partial\Omega} \tr\omega \wedge \tr(\star\mu), \qquad \omega \in \Lambda^{k-1}(\Omega),~~\mu \in \Lambda^{k}(\Omega).
$$
%
%
We define the spaces
\begin{align*}
H^{*}\Lambda^{k}(\Omega) & = \{ \omega \in L^{2}\Lambda^{k}(\Omega):~\delta \omega \in L^{2}\Lambda^{k-1} (\Omega)\},\\
H_{0}^{*}\Lambda^{k}(\Omega) & = \{ \omega \in H^{*}\Lambda^{k}(\Omega):~\tr \star\omega = 0\text{ on }\partial\Omega\}.
\end{align*}
Treat $\dd:~H_{0}\Lambda^{k}(\Omega) \subset L^{2}\Lambda^{k}(\Omega) \rightarrow L^{2}\Lambda^{k+1}(\Omega)$ as an unbounded and densely defined operator. Then Stokes theorem implies that $\delta:~H^{*}\Lambda^{k+1}(\Omega)\subset L^{2}\Lambda^{k+1}(\Omega) \rightarrow L^{2}\Lambda^{k}(\Omega)$ is the adjoint of $\dd$ as
\begin{equation}
\label{eq:delta}
\langle \dd \omega,\mu\rangle = \langle\omega,\delta\mu\rangle,\qquad\forall~~\omega \in H_{0}\Lambda^{k}(\Omega),~~\mu\in H^{*}\Lambda^{k+1}(\Omega).
\end{equation}
We have a dual sequence of \eqref{eq:deRham-prime}
\begin{equation}\label{eq:deRham}
\begin{CD}
 H^{*}\Lambda^0(\Omega) @<{\delta}<<   H^{*}\Lambda^1(\Omega)    @<{\delta}<<\cdots @< {\delta}<<   H^{*}\Lambda^{n-1}(\Omega) @<{\delta}<< H^{*}\Lambda^n(\Omega).
\end{CD}
\end{equation}

Let $\mathfrak Z_{0}^{k}$ be the kernel of $\dd$ in the space $H_{0}\Lambda^{k}(\Omega)$, then $\mathfrak Z_{0}^{k}$ can be decomposed as $\mathfrak Z_{0}^{k} = \mathfrak B_{0}^{k} \oplus^{\bot_{L^{2}}} \mathfrak H_{0}^{k}$, where $\mathfrak B_{0}^{k}$ is the range of $\dd^{k-1}$, i.e., $\mathfrak B_{0}^{k} = \dd (H_{0}\Lambda^{k-1}(\Omega))$ and $\mathfrak H_{0}^{k}$ is the space of harmonic forms, i.e., $\mathfrak H_{0}^{k} = \{\omega\in H_{0}\Lambda^{k}(\Omega) \cap H^{*}\Lambda^{k}(\Omega):~~\dd\omega = 0 \text{ and } \delta\omega = 0\}$, $\oplus^{\bot_{L^{2}}}$ means that the decomposition is orthogonal in the sense of the $L^{2}$-inner product. The following Hodge decomposition has been established in \cite[page 22]{Arnold;Falk;Winther2006}:
$$
L^{2}\Lambda^{k}(\Omega) = \mathfrak B_{0}^{k}\oplus^{\bot_{L^{2}}} \mathfrak H_{0}^{k} \oplus^{\bot_{L^{2}}} \delta H^{*}\Lambda^{k+1}(\Omega).
$$
Denote $\mathfrak K^{k}$ as the $L^{2}$ orthogonal complement of $\mathfrak Z_{0}^{k}$ in $H_{0}\Lambda^{k}(\Omega)$, i.e., $\mathfrak K^{k} = H_{0}\Lambda^{k}(\Omega) \cap \delta H^{*}\Lambda^{k+1}(\Omega)$. Then we have the Hodge decomposition of $H_{0}\Lambda^{k}(\Omega)$:
\begin{equation}\label{eq:hodge}
H_{0}\Lambda^{k}(\Omega) = \mathfrak Z_{0}^{k} \oplus^{\bot_{L^{2}}}\mathfrak K^{k}= \mathfrak B_{0}^{k} \oplus^{\bot_{L^{2}}} \mathfrak H_{0}^{k} \oplus^{\bot_{L^{2}}} \mathfrak K^{k}.
\end{equation}
It should be point out that when $k =0$, we have $\mathfrak Z_{0}^{0} = \{0\}$ and $\mathfrak K^{-1} = \{0\}$. When $k = n$, we have $\mathfrak K^{n} = \{0\}$.

In the following sections, when spaces of the consecutive differential forms are involved, we use the short sequences
\begin{equation}
\label{eq:shortsqe}
\begin{CD}
H_{0}\Lambda^{-}(\Omega) @>{\dd^{-}} >> H_{0}\Lambda(\Omega) @>{\dd}>> H_{0}\Lambda^{+}(\Omega)
\end{CD}
\end{equation}
or the one with the Hodge decomposition
\begin{equation}
\label{eq:shortsqe_hodge}
\begin{CD}
\mathfrak B_{0}^{-} \oplus^{\bot_{L^{2}}} \mathfrak H_{0}^{-} \oplus^{\bot_{L^{2}}} \mathfrak K^{-} @>{\dd^{-}} >> \mathfrak B_{0} \oplus^{\bot_{L^{2}}} \mathfrak H_{0} \oplus^{\bot_{L^{2}}} \mathfrak K @>{\dd} >>\mathfrak B_{0}^{+} \oplus^{\bot_{L^{2}}} \mathfrak H_{0}^{+} \oplus^{\bot_{L^{2}}} \mathfrak K^{+}.
\end{CD}
\end{equation}

In this paper, we consider the domain $\Omega$ with zero Betti numbers, namely, we impose the following assumption on the domain $\Omega$:
\begin{description}
  \item[(A)] We assume that $\Omega$ is simple in the sense that $\dim\mathfrak H_{0}^{k} = 0$ for all $1\leq k \leq n-1$.
\end{description} 

\subsection{The Hodge wave equation}
The Hodge wave equation reads as given $f:~(0,T)\mapsto L^{2}\Lambda^{k}$, find $u \in H^{2}((0,T),H_{0}\Lambda^{k} \cap H^{*}\Lambda^{k})$ such that
\begin{equation}\label{eq:HodgeWave}
u_{tt} + \mathcal Lu = f\qquad \text{in }\Omega,
\end{equation} 
where $\mathcal L = \dd^{-}\delta + \delta^{+}\dd$ is called the Hodge Laplacian operator \cite{Arnold;Falk;Winther2010}, with the initial conditions
\begin{equation}\label{eq:initialHodgeWave}
u(\cdot,0) = u_{0}(\cdot),\qquad u_{t}(\cdot,0) = u_{1}(\cdot).
\end{equation}


For easy to preserve the energy exactly, we will use mixed method to discrete \eqref{eq:HodgeWave}. Introduce a $(k - 1)$-form $\sigma = \delta u$ and a $(k+1)$-form $\omega = \dd u$ with standard modification for $k = 0$ or $k = n$, and a $k$-form $\mu = u_{t}$.
The mixed formulation \cite{Quenneville-Belair2015} of the Hodge wave equation \eqref{eq:HodgeWave} is: given $f \in L^{2}((0,T),L^{2}\Lambda)$, find $(\sigma,\mu,\omega):~(0,T] \mapsto H_{0}\Lambda^{-}\times H_{0}\Lambda \times H_{0}\Lambda^{+} := \bs W$ such that
\begin{align}
\label{eq:mix1}
\langle \sigma_{t},\tau\rangle - \langle\dd^{-}\tau,\mu\rangle  & = 0\qquad\qquad\forall~~\tau \in H_{0}\Lambda^{-}, \\
\label{eq:mix2}
\langle \mu_{t},v\rangle + \langle \dd^{-}\sigma,v\rangle + \langle \omega,\dd v\rangle & = \langle f,v\rangle\,\qquad \forall~~v\in H_{0}\Lambda^{k}, \\
\label{eq:mix3}
\langle \omega_{t},\phi\rangle - \langle \dd \mu,\phi \rangle & = 0\qquad\qquad \forall~~\phi \in H_{0}\Lambda^{+},
\end{align}
with initial conditions
$$
\sigma(\cdot,0) = \delta u_{0},\quad \mu(\cdot,0) = u_{1}(\cdot),\quad \omega(\cdot,0) = \dd u_{0}(\cdot).
$$
Denoted by 
$$
\mathcal A = \begin{pmatrix}
0 & \delta & 0 \\ -\dd^{-} & 0 & - \delta^{+} \\ 0 & \dd & 0
\end{pmatrix}.
$$
 
The existence of solutions for the mixed formulation \eqref{eq:mix1}-\eqref{eq:mix3} can be found in \cite{Quenneville-Belair2015} and it can also be obtained by Picard Theorem since the operator 
$$
\mathcal A:~H_{0}\Lambda^{-} \times (H_{0}\Lambda \cap H^{*}\Lambda) \times (H_{0}\Lambda^{+} \cap H^{*}\Lambda^{+}) \rightarrow L^{2}\Lambda^{-} \times L^{2} \Lambda \times L^{2}\Lambda^{+}
$$
is bounded. To prove the uniqueness of the solution, we need the energy estimates. We introduce a basic inequality.
\begin{lemma}{{\rm (}\cite[Lemma 1]{Kirby_2014}{\rm )}}\label{lem:greatwall}
Suppose that a real number $x$ satisfies the quadratic inequality 
$$
x^{2} \leq \gamma^{2} + \beta x
$$
for $\beta,~\gamma \geq 0$ and $\beta^{2} + \gamma^{2} >0$. Then
$$
x \leq \beta + \gamma.
$$
\end{lemma}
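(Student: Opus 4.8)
The plan is to read the conclusion directly off the roots of the quadratic hidden in the hypothesis. First I would rewrite the assumed inequality $x^{2} \leq \gamma^{2} + \beta x$ as
\[
g(x) := x^{2} - \beta x - \gamma^{2} \leq 0,
\]
a monic quadratic in $x$. Its discriminant is $\beta^{2} + 4\gamma^{2}$, which is strictly positive precisely because of the standing hypothesis $\beta^{2} + \gamma^{2} > 0$, so $g$ has two distinct real roots
\[
t_{\pm} = \frac{\beta \pm \sqrt{\beta^{2} + 4\gamma^{2}}}{2}.
\]
Since the leading coefficient is positive, the parabola opens upward, hence the solution set $\{x : g(x) \leq 0\}$ is exactly the closed interval $[t_{-}, t_{+}]$. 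In particular the given $x$ satisfies $x \leq t_{+}$, so it suffices to bound the larger root.

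The second step is to show $t_{+} \leq \beta + \gamma$, which amounts to the elementary estimate $\sqrt{\beta^{2} + 4\gamma^{2}} \leq \beta + 2\gamma$. Here is where I would use the sign hypotheses $\beta, \gamma \geq 0$: both sides are nonnegative, so squaring is an equivalence, and it reduces the claim to $\beta^{2} + 4\gamma^{2} \leq \beta^{2} + 4\beta\gamma + 4\gamma^{2}$, i.e.\ to $0 \leq 4\beta\gamma$, which holds. Substituting back gives $t_{+} \leq \tfrac{1}{2}\bigl(\beta + (\beta + 2\gamma)\bigr) = \beta + \gamma$, and combining with $x \leq t_{+}$ yields $x \leq \beta + \gamma$, as claimed.

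As a shorter alternative I would keep in reserve a contradiction argument that avoids the square root entirely: evaluate $g(\beta + \gamma) = (\beta+\gamma)^{2} - \beta(\beta+\gamma) - \gamma^{2} = \beta\gamma \geq 0$, and note that the vertex of $g$ sits at $x = \beta/2 < \beta + \gamma$ (the strictness again coming from $\beta^{2} + \gamma^{2} > 0$). Thus $g$ is strictly increasing on $[\beta+\gamma, \infty)$, so if one had $x > \beta + \gamma$ then $g(x) > g(\beta+\gamma) \geq 0$, contradicting $g(x) \leq 0$.

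There is essentially no hard part in this lemma; it is a one-variable algebraic fact. The only places that genuinely require attention are making sure I select the upper root $t_{+}$ rather than $t_{-}$ when inverting the inequality $g(x) \leq 0$, and checking that the sign conditions $\beta, \gamma \geq 0$ are exactly what legitimize the squaring step $\sqrt{\beta^{2} + 4\gamma^{2}} \leq \beta + 2\gamma$. The auxiliary hypothesis $\beta^{2} + \gamma^{2} > 0$ only serves to guarantee a strictly positive discriminant (equivalently, strict monotonicity at $\beta + \gamma$ in the alternative argument); the conclusion in fact persists in the degenerate case $\beta = \gamma = 0$, where $x^{2} \leq 0$ forces $x = 0 = \beta + \gamma$.
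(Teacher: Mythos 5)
Your proof is correct, and in fact the paper offers no proof of its own to compare against: Lemma \ref{lem:greatwall} is simply quoted from \cite[Lemma 1]{Kirby_2014} and used as a black box. Both of your arguments are sound. In the root-based version, the discriminant $\beta^{2}+4\gamma^{2}\geq \beta^{2}+\gamma^{2}>0$ is indeed positive, the solution set of $g(x)\leq 0$ is the interval between the roots, and the reduction of $t_{+}\leq\beta+\gamma$ to $\sqrt{\beta^{2}+4\gamma^{2}}\leq\beta+2\gamma$ followed by squaring (legitimate since both sides are nonnegative) lands on $0\leq 4\beta\gamma$, which is exactly where the sign hypotheses enter. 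The reserve argument is equally valid: $g(\beta+\gamma)=\beta\gamma\geq 0$ together with strict increase of $g$ to the right of the vertex $\beta/2<\beta+\gamma$ rules out $x>\beta+\gamma$. Your closing observation that the hypothesis $\beta^{2}+\gamma^{2}>0$ is not actually needed for the conclusion (the degenerate case forces $x=0$) is also correct, and is a mild sharpening of the statement as quoted. Nothing is missing.
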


We define two energies of the mixed formulation \eqref{eq:mix1}-\eqref{eq:mix3} as
$$
E(t) = \left(\|\sigma(\cdot,t)\|^{2} + \|\mu(\cdot,t)\|^{2} + \|\omega(\cdot,t)\|^{2} \right)^{1/2}
$$
and 
$$
H(t) =\left( \|\dd^{-}\sigma(\cdot,t)\|^{2} + \|\dd \mu(\cdot,t)\|^{2}  + \|\delta\mu(\cdot,t)\|^{2}+ \|\delta^{+}\omega(\cdot,t)\|^{2} \right)^{1/2}.
$$
We have the following energy estimates.
\begin{theorem}\label{the:energy_continuous}
Let $\bs u = (\sigma,\mu,\omega)^{\intercal}\in \bs W$ be the solution of the mixed formulation \eqref{eq:mix1}-\eqref{eq:mix3}. Provided $f \in L^{1}((0,T),L^{2}\Lambda)$, we have the energy bound
\begin{equation}\label{eq:energy1}
\sup\limits_{0\leq s\leq T} E(t) \leq E(0) + 2\int_{0}^{T}\|f(\cdot,s)\| \dd s.
\end{equation}
Furthermore, if $f\in W^{1,1}((0,T),L^{2}\Lambda)$, we have the bound
\begin{equation}\label{eq:energy2}
\sup\limits_{0\leq t \leq T}H(t) \leq H(0) + 4\|f\|_{L^\infty(L^2)} +2\int_{0}^{T} \|f_{t}(\cdot,s)\|\dd s.
\end{equation}
When $f = 0$, the inequalities become equalities and thus we have the energy conservation 
$$
E(t) = E(0),  H(t) = H(0), \quad \forall t>0.
$$

\end{theorem}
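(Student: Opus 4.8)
The plan is to exploit the skew-symmetry of the spatial operators together with the quadratic Gronwall-type inequality of Lemma~\ref{lem:greatwall}. For the first bound \eqref{eq:energy1}, I would test \eqref{eq:mix1} with $\tau=\sigma$, \eqref{eq:mix2} with $v=\mu$, and \eqref{eq:mix3} with $\phi=\omega$, and then add the three identities. The cross terms $\mp\langle\dd^{-}\sigma,\mu\rangle$ and $\pm\langle\omega,\dd\mu\rangle$ cancel in pairs, leaving $\langle\sigma_{t},\sigma\rangle+\langle\mu_{t},\mu\rangle+\langle\omega_{t},\omega\rangle=\langle f,\mu\rangle$, that is $\frac12\frac{d}{dt}E(t)^{2}=\langle f,\mu\rangle$. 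Integrating in time and using Cauchy--Schwarz with $\|\mu(\cdot,s)\|\le E(s)\le \sup_{[0,T]}E$ gives $E(t)^{2}\le E(0)^{2}+2(\sup_{[0,T]}E)\int_{0}^{T}\|f(\cdot,s)\|\dd s$. Taking the supremum over $t\in[0,T]$ casts this as $x^{2}\le\gamma^{2}+\beta x$ with $x=\sup_{[0,T]}E$, $\gamma=E(0)$ and $\beta=2\int_{0}^{T}\|f(\cdot,s)\|\dd s$, so Lemma~\ref{lem:greatwall} yields \eqref{eq:energy1}; the factor $2$ is produced precisely by this lemma.

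For the second bound \eqref{eq:energy2}, I would differentiate \eqref{eq:mix1}--\eqref{eq:mix3} in time. The differentiated system has the identical structure with $(\sigma,\mu,\omega)$ replaced by $(\sigma_{t},\mu_{t},\omega_{t})$ and $f$ by $f_{t}$, so the first part already gives $\sup_{[0,T]}\tilde E\le \tilde E(0)+2\int_{0}^{T}\|f_{t}(\cdot,s)\|\dd s$, where $\tilde E^{2}=\|\sigma_{t}\|^{2}+\|\mu_{t}\|^{2}+\|\omega_{t}\|^{2}$. It then remains to compare $\tilde E$ with $H$. Reading \eqref{eq:mix1}--\eqref{eq:mix3} in strong form (using density of $H_{0}\Lambda$ in $L^{2}\Lambda$ and the adjoint relation \eqref{eq:delta}) gives $\sigma_{t}=\delta\mu$, $\omega_{t}=\dd\mu$, and $\mu_{t}=-\dd^{-}\sigma-\delta^{+}\omega+f$. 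Because $\dd\circ\dd^{-}=0$, relation \eqref{eq:delta} gives $\langle\dd^{-}\sigma,\delta^{+}\omega\rangle=\langle\dd\dd^{-}\sigma,\omega\rangle=0$, which is the orthogonality between the range of $\dd^{-}$ and $\delta H^{*}\Lambda^{+}$ reflected in the Hodge decomposition \eqref{eq:hodge}. Hence $\|\dd^{-}\sigma\|^{2}+\|\delta^{+}\omega\|^{2}=\|\dd^{-}\sigma+\delta^{+}\omega\|^{2}=\|\mu_{t}-f\|^{2}$, and therefore $H(t)^{2}=\|\sigma_{t}\|^{2}+\|\omega_{t}\|^{2}+\|\mu_{t}-f\|^{2}$.

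Viewing $\tilde E(t)$ and $H(t)$ as the Euclidean norms of $(\sigma_{t},\omega_{t},\mu_{t})$ and $(\sigma_{t},\omega_{t},\mu_{t}-f)$ in the product space $L^{2}\times L^{2}\times L^{2}$, the triangle inequality gives both $H(t)\le\tilde E(t)+\|f(\cdot,t)\|$ and $\tilde E(0)\le H(0)+\|f(\cdot,0)\|$. Chaining these with the bound on $\sup_{[0,T]}\tilde E$ and estimating $\|f(\cdot,t)\|\le\|f\|_{L^{\infty}(L^{2})}$ yields $\sup_{[0,T]}H\le H(0)+2\|f\|_{L^{\infty}(L^{2})}+2\int_{0}^{T}\|f_{t}(\cdot,s)\|\dd s$, which is sharper than, and hence implies, \eqref{eq:energy2}. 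Finally, when $f=0$ the identity $\frac12\frac{d}{dt}E^{2}=\langle f,\mu\rangle$ forces $E$ to be constant; the differentiated system then has vanishing source, so $\tilde E$ is constant, and since $H=\tilde E$ when $f=0$ this gives $H(t)=H(0)$, i.e.\ both inequalities collapse to equalities.

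I expect the main obstacle to be the rigorous justification of the time differentiation and of the passage to the strong-form relations: one must secure enough temporal and spatial regularity of $(\sigma,\mu,\omega)$ to differentiate \eqref{eq:mix1}--\eqref{eq:mix3} in $t$ and to interpret $\mu_{t}=-\dd^{-}\sigma-\delta^{+}\omega+f$ pointwise in $L^{2}$, which is where the hypotheses $f\in W^{1,1}((0,T),L^{2}\Lambda)$ and $u\in H^{2}((0,T),\cdot)$ enter. By comparison, the algebra is routine once the skew-symmetry cancellation and the $\dd\circ\dd=0$ orthogonality are in hand.
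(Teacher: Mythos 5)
Your proof of \eqref{eq:energy1} is exactly the paper's argument: test with $(\sigma,\mu,\omega)$, use skew-symmetry to get $\tfrac12\tfrac{\dd}{\dd t}E^2=\langle f,\mu\rangle$, integrate, and close with Lemma~\ref{lem:greatwall}. For \eqref{eq:energy2}, however, you take a genuinely different route. The paper does not differentiate the system in time; instead it tests \eqref{eq:mix1}--\eqref{eq:mix3} directly with $\tau=\delta\mu_t$, $v=-\dd^-\sigma_t-\delta^+\omega_t$, $\phi=\dd\mu_t$, obtains $\tfrac12\tfrac{\dd}{\dd t}H^2=\langle f,\dd^-\sigma_t+\delta^+\omega_t\rangle$, and then integrates by parts in $t$ to trade the time derivative on the solution for one on $f$, which is where the $4\|f\|_{L^\infty(L^2)}$ and $2\int_0^T\|f_t\|$ terms arise before applying Lemma~\ref{lem:greatwall}. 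You instead differentiate the system, reuse the first estimate for $\tilde E=(\|\sigma_t\|^2+\|\mu_t\|^2+\|\omega_t\|^2)^{1/2}$ with source $f_t$, and then convert between $\tilde E$ and $H$ via the strong-form identities $\sigma_t=\delta\mu$, $\omega_t=\dd\mu$, $\mu_t=f-\dd^-\sigma-\delta^+\omega$ together with the orthogonality $\langle\dd^-\sigma,\delta^+\omega\rangle=0$ coming from $\dd\circ\dd^-=0$ and \eqref{eq:delta} (consistent with the Hodge decomposition \eqref{eq:hodge}). This is correct and in fact yields the sharper constant $2\|f\|_{L^\infty(L^2)}$ in place of $4\|f\|_{L^\infty(L^2)}$, so it implies \eqref{eq:energy2}; the price is that you must justify differentiating the weak formulation in time and passing to the strong form, whereas the paper's test-function choice buries the analogous regularity requirements (e.g.\ $\delta\mu_t\in H_0\Lambda^-$, $\dd^-\sigma_t+\delta^+\omega_t\in H_0\Lambda$) in the admissibility of its test functions --- neither proof is more rigorous than the other on this point. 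The observation that your identity $H^2=\|\sigma_t\|^2+\|\omega_t\|^2+\|\mu_t-f\|^2$ immediately gives $H=\tilde E$ and hence exact conservation when $f=0$ matches the paper's conclusion.
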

\begin{proof}
Taking $\tau = \sigma$, $v = \mu$ and $\phi = \omega$ in \eqref{eq:mix1} - \eqref{eq:mix3}
and adding them together, we obtain
$$
\frac{1}{2}\frac{\dd }{\dd t} E^{2}(t) = \langle f,\mu\rangle.
$$
Integrate the above equation on the interval $(0,s)$, for any $s \in (0,T]$, we have
\begin{align*}
E^{2}(s) & = E^{2}(0) + 2 \int_{0}^{s}\langle f,\mu\rangle \dd t \\
& \leq E^{2}(0) + 2 \sup\limits_{0 \leq t \leq T}E(t) \int_{0}^{T}\|f\| \dd t.
\end{align*}
Then \eqref{eq:energy1} follows by Lemma \ref{lem:greatwall}.

Taking $\tau = \delta \mu_{t}$, $v = -\dd^{-}\sigma_{t} -\delta^{+}\omega_{t}$ and $\phi = \dd\mu_{t}$ in \eqref{eq:mix1}-\eqref{eq:mix3}, we have
\begin{align*}
\langle\dd^{-}\sigma_{t},\mu_{t}\rangle - \frac{1}{2}\frac{\dd}{\dd t}\|\delta\mu\|^{2} & = 0, \\
-\langle \mu_{t},\dd^{-}\sigma_{t}\rangle - \langle \dd\mu_{t},\omega_{t}\rangle -\frac{1}{2}\frac{\dd }{\dd t}\|\dd^{-}\sigma\|^{2} - \frac{1}{2}\frac{\dd }{\dd t}\|\delta^{+}\omega\|^{2} & = -\langle f ,\dd^{-}\sigma_{t} + \delta^{+}\omega_{t} \rangle,\\
\langle \omega_{t},\dd\mu_{t}\rangle - \frac{1}{2}\frac{\dd}{\dd t}\|\dd\mu\|^{2} & = 0.
\end{align*}
Add the above equations together, we obtain
$$
\frac{1}{2}\frac{\dd}{\dd t} H^{2}(t) = \langle f,\dd^{-}\sigma_{t} + \delta^{+}\omega_{t} \rangle.
$$
Pick any $0 \leq s \leq T$ and integrate from $0$ to $s$ to obtain
\begin{align*}
H^{2}(s) & = H^{2}(0) + 2 \int_{0}^{s}\langle f,\dd^{-}\sigma_{t} + \delta^{+}\omega_{t}\rangle \dd t\\
& = H^{2}(0) + 2\langle f(\cdot,s),\dd^{-}\sigma(\cdot,s) + \delta^{+}\omega(\cdot,s)\rangle - 2\langle f(\cdot,0),\dd^{-}\sigma(\cdot,0) + \delta^{+}\omega(\cdot,0)\rangle\\
&\quad - 2\int_{0}^{s}\langle f_{t},\dd^{-}\sigma + \delta^{+}\omega\rangle \dd t \\
& \leq H^{2}(0) + \sup\limits_{0\leq t\leq T} H(t) \left(4\|f\|_{L^\infty(L^2)} + 2\int_{0}^{T} \|f_{t}\|\dd t \right).
\end{align*}
Then the desired inequality \eqref{eq:energy2} follows from Lemma \ref{lem:greatwall}. 
\end{proof}

\begin{remark}\label{re:en-pre-exact}
When the source term $f$ of \eqref{eq:HodgeWave} equal $0$, i.e., \eqref{eq:HodgeWave} is a self-conserve system, Theorem \ref{the:energy_continuous} implies that the mixed form \eqref{eq:mix1}-\eqref{eq:mix3} preserves the energies $E$ and $H$ exactly.
\end{remark}
\begin{remark}\label{re:uni-exi-solu}
Theorem \ref{the:energy_continuous} implies the uniqueness of solution of \eqref{eq:mix1}-\eqref{eq:mix3} in the space $\bs W$. Together with the existence of solutions in the space $H_{0}\Lambda^{-} \times (H_{0}\Lambda \cap H^{*}\Lambda) \times (H_{0}\Lambda^{+}\cap H^{*}\Lambda^{+})$, we obtain that \eqref{eq:mix1}-\eqref{eq:mix3} have a unique solution $\bs u = (\sigma,\mu,\omega)^{\intercal}$ in the space $H_{0}\Lambda^{-} \times (H_{0}\Lambda \cap H^{*}\Lambda) \times (H_{0}\Lambda^{+}\cap H^{*}\Lambda^{+})$ and for any $t \in (0,T]$ satisfying 
\begin{equation}\label{eq:hodge-operator-form}
\la\bs u_{t},\bs v\ra + \la\mathcal A\bs u,\bs v\ra = \la\bs F,\bs v\ra\qquad\forall~~\bs v \in \bs W,
\end{equation}
with $\bs F = (0,~f,~0)^{\intercal}$.
\end{remark}

\section{Semi-discretization of the Hodge wave equation}
\label{sec:semi}

In this section, we will introduce mixed finite element methods developed in \cite{Quenneville-Belair2015,Arnold2018} for the spatial discretization of the Hodge wave equation \eqref{eq:HodgeWave}, and give the energy estimates and optimal error estimates.

\subsection{Finite element spaces}
Let $\mathcal T_{h}$ be a shape regular triangulation of $\Omega$. For each $n$-simplex $K \in \mathcal T_{h}$, we define $h_{K} = |K|^{1/n}$ and $h = \max\limits_{K \in \mathcal T_{h}} h_{K}$. For completeness, we briefly introduce the construction of finite element spaces following \cite{Arnold;Falk;Winther2006,Arnold2018}.

Denote $\mathcal P_{r}(\mathbb R^{n})$ as the space of polynomials in $n$ variables of degree at most $r$ and $\mathcal H_{r}(\mathbb R^{n})$ as the space of homogeneous polynomial functions of degree $r$. Spaces of polynomial differential forms $\mathcal P_{r}\Lambda^{k}(\mathbb R^{n})$ and $\mathcal H_{r}(\mathbb R^{n})$ can be defined by using the corresponding polynomial as the coefficients. We will suppress $\mathbb R^{n}$ from the notation for simplicity. For each integer $r \geq n$, we have the polynomial subcomplex of the de Rham complex
$$
\begin{CD}
0 @>{}>>  \mathcal P_{r}\Lambda^0 @>{\dd}>> \mathcal P_{r-1}\Lambda^1    @>{\dd}>> \cdots @>{\dd}>> \mathcal P_{r-n}\Lambda^{n} @>{}>> 0. 
\end{CD}
$$
Given a point $x \in \mathbb R^{n}$, treat $x$ as a vector in the tangential space $T_{x}\mathbb R^{n}$ and define the Koszul operator $\kappa:~\Lambda^{k}(\mathbb R^{n}) \rightarrow \Lambda^{k-1}(\mathbb R^{n})$ as 
$$
(\kappa\omega)_{x}(v_{1},v_{2},\cdots,v_{k-1}) = \omega_{x}(x,v_{1},v_{2},\cdots,v_{k-1}).
$$
This $\kappa$ satisfying the identity $\kappa\dd + \dd\kappa = (k+r){\rm id}$ \cite[Theorem 3.1]{Arnold;Falk;Winther2006} on the space $\mathcal H_{r}\Lambda^{k}$ and there is a direct sum
$$
\mathcal H_{r}\Lambda^{k} = \kappa \mathcal H_{r-1}\Lambda^{k+1} \oplus \dd\mathcal H_{r+1}\Lambda^{k-1}.
$$
Based on the decomposition, the incomplete polynomial differential form can be introduced as 
$$
\mathcal P_{r}^{-}\Lambda^{k} = \mathcal P_{r-1}\Lambda^{k} + \kappa \mathcal H_{r-1}\Lambda^{k+1}
$$
and, for $r\geq 1$, have the following subcomplex of the de Rham complex
$$
\begin{CD}
0 @>{}>>  \mathcal P_{r}^{-}\Lambda^0 @>{\dd}>> \mathcal P_{r}^{-}\Lambda^1    @>{\dd}>> \cdots @>{\dd}>> \mathcal P_{r}^{-}\Lambda^{n} @>{}>> 0. 
\end{CD}
$$

For each simplex $K \in\mathcal T_{h}$, denote $\mathcal P_{r}\Lambda^{k}(K)$ or $\mathcal P_{r}^{-}\Lambda^{k}(K)$ as the spaces of $k$ forms obtained by restricting the forms $\mathcal P_{r}\Lambda^{k}(\mathbb R^{n})$ or $\mathcal P_{r}^{-}\Lambda^{k}(\mathbb R^{n})$, respectively, to $K$. We then obtain the finite element spaces
\begin{align*}
\mathcal P_{r}\Lambda^{k}(\mathcal T_{h}) & = \{\omega \in H\Lambda^{k}(\Omega):~~\omega|_{K} \in \mathcal P_{r}\Lambda^{k}(K),~~\forall ~~K \in \mathcal T_{h}\}, \\
\mathcal P_{r}^{-}\Lambda^{k}(\mathcal T_{h}) & = \{\omega \in H\Lambda^{k}(\Omega):~~\omega|_{K} \in \mathcal P_{r}^{-}\Lambda^{k}(K),~~\forall ~~K \in \mathcal T_{h}\}.
\end{align*}
We choose $V_{h}^{k} = \mathcal P_{r}\Lambda^{k}(\mathcal T_{h}) \cap H_{0}\Lambda^{k}$ 
or $V_{h}^{k} = \mathcal P_{r}^{-}\Lambda^{k}(\mathcal T_{h}) \cap H_{0}\Lambda^{k}$ 
so that $(V_{h}^{k},\dd)$ forms a subcomplex of $(H_{0}\Lambda^{k}(\Omega),\dd)$. For the consecutive spaces, we shall use short sequence 
$$
\begin{CD}
V_{h}^{-} @>{\dd^{-}}>>  V_{h} @>{\dd}>> V_{h}^{+}. 
\end{CD}
$$
The discrete coderivative $\delta_{h}:~V_{h} \rightarrow V_{h}^{-}$ is defined as the $L^{2}$-adjoint of $\dd^{-}:~V_{h}^{-} \rightarrow V_{h}$, i.e., for any given $\omega_{h} \in V_{h}$, $\delta_{h}\omega_{h} $ is the unique element in $V_{h}^{-}$ such that
\begin{equation}\label{eq:delta_h}
\la\delta_{h}\omega_{h},v_{h}\ra  = \la \omega_{h},\dd^{-}v_{h}\ra\qquad \forall~~v_{h} \in V_{h}^{-}.
\end{equation}

The discrete Hodge decomposition of $V_{h}^{k}$ is 
\begin{equation}
\label{eq:hodge_dec_dis}
V_{h}^{k} = \mathfrak Z_{0,h} \oplus^{\bot_{L^{2}}} \mathfrak K_{h},
\end{equation}
where $\mathfrak Z_{0,h} = \ker(\dd) \cap V_{h} = \dd^{-}\bs V_{h}^{-} \subset \mathfrak Z_{0}$ and $\mathfrak K_{h} = \delta_{h}^{+} V_{h}^{+}$ is the $L^{2}$ orthogonal complement of $\mathfrak Z_{h,0}$ in $V_{h}$. Generally $\mathfrak K_{h} \not\subset \mathfrak K$, since $\delta_{h}$ is not a conforming discretization of $\delta$. It should be point out that when $k = 0$, we have $\mathfrak Z_{0,h} = \{0\}$ and $\mathfrak K_{h}^{-} = \{0\}$. When $k = n$, we have $\mathfrak K_{h} = \{0\}$. 

We have the following discrete Poincar\'e inequality; cf. \cite[Theorem 5.11]{Arnold;Falk;Winther2006}
\begin{lemma}[discrete Poincar\'e inequality for $\dd$]\label{lem:poincare-dis}
There is a positive constant $C_{p}$, independent of $h$, such that
\begin{equation}\label{eq:poincare-dis}
\|\omega_{h}\|  \leq C_{p}\|\dd\omega_{h}\|\qquad\forall ~~\omega_{h} \in \mathfrak K_{h}.
\end{equation}
\end{lemma}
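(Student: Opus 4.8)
The plan is to reduce the discrete estimate to the continuous Poincar\'e inequality by transferring a continuous representative into $V_h$ with a uniformly bounded commuting cochain projection $\pi_h$, exactly along the lines of the proof of \cite[Theorem 5.11]{Arnold;Falk;Winther2006}. First I would record the continuous counterpart: since the domain is simple, the complex \eqref{eq:deRham-prime} is a closed Hilbert complex, so $\dd$ has closed range and the restriction $\dd:\mathfrak K^{k}\to\mathfrak B_{0}^{k+1}$ is a bounded bijection (it is injective because $\mathfrak K^{k}\cap\mathfrak Z_{0}^{k}=\{0\}$ by \eqref{eq:hodge}, and surjective because $\dd$ annihilates $\mathfrak Z_{0}^{k}$). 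By the open mapping theorem this yields a constant $C_{P}$, depending only on $\Omega$ and $k$, with $\|\eta\|\le C_{P}\|\dd\eta\|$ for all $\eta\in\mathfrak K^{k}$.

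Given $\omega_{h}\in\mathfrak K_{h}$, my next step is to build a continuous form with the same exterior derivative. Because $\omega_{h}\in V_{h}\subset H_{0}\Lambda^{k}$, we have $\dd\omega_{h}\in\dd H_{0}\Lambda^{k}=\mathfrak B_{0}^{k+1}$, so the bijectivity above furnishes a unique $\eta\in\mathfrak K^{k}$ with $\dd\eta=\dd\omega_{h}$, and the continuous Poincar\'e inequality gives $\|\eta\|\le C_{P}\|\dd\omega_{h}\|$.

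Then I would project $\eta$ back into $V_{h}$ using the bounded cochain projection $\pi_{h}\colon L^{2}\Lambda^{\bullet}\to V_{h}^{\bullet}$ of finite element exterior calculus, which commutes with $\dd$, restricts to the identity on $V_{h}$, and is $L^{2}$-bounded uniformly in $h$ (see \cite{Arnold;Falk;Winther2006,Arnold;Falk;Winther2010}). Since $\dd\omega_{h}\in V_{h}^{+}$, commutativity and the projection property give $\dd\pi_{h}\eta=\pi_{h}\dd\eta=\pi_{h}\dd\omega_{h}=\dd\omega_{h}$, so $\pi_{h}\eta-\omega_{h}\in\ker\dd\cap V_{h}=\mathfrak Z_{0,h}$. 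As $\omega_{h}\in\mathfrak K_{h}$ is $L^{2}$-orthogonal to $\mathfrak Z_{0,h}$ by the decomposition \eqref{eq:hodge_dec_dis}, testing $\omega_{h}$ against $\pi_{h}\eta-\omega_{h}$ gives
\begin{equation*}
\|\omega_{h}\|^{2}=\la\omega_{h},\omega_{h}\ra=\la\omega_{h},\pi_{h}\eta\ra\le\|\omega_{h}\|\,\|\pi_{h}\eta\|,
\end{equation*}
whence $\|\omega_{h}\|\le\|\pi_{h}\eta\|\le C\|\eta\|\le C\,C_{P}\|\dd\omega_{h}\|$, which is \eqref{eq:poincare-dis} with $C_{p}=C\,C_{P}$ independent of $h$.

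The only genuinely delicate ingredient is the uniformly bounded commuting projection $\pi_{h}$: the canonical interpolant enjoys the commuting property but is neither defined on all of $L^{2}\Lambda^{k}$ nor $L^{2}$-stable, so one cannot apply it directly to $\eta\in\mathfrak K^{k}$; instead one must invoke the smoothed bounded cochain projection. I would cite this construction rather than reproduce it, after which the argument is just the elementary orthogonality and duality estimate displayed above. (One may equally use only $H\Lambda$-boundedness of $\pi_{h}$, bounding $\|\pi_{h}\eta\|\le C\|\eta\|_{H\Lambda}$ and controlling $\|\eta\|_{H\Lambda}^{2}=\|\eta\|^{2}+\|\dd\omega_{h}\|^{2}\le(1+C_{P}^{2})\|\dd\omega_{h}\|^{2}$, which yields the same conclusion.)
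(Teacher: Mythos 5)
Your proof is correct and coincides with the standard argument: the paper gives no proof of this lemma, simply citing \cite[Theorem 5.11]{Arnold;Falk;Winther2006}, and your chain (continuous Poincar\'e inequality on $\mathfrak K^{k}$ via closed range, lifting $\dd\omega_{h}$ to $\eta\in\mathfrak K^{k}$, then using a uniformly bounded commuting cochain projection together with the $L^{2}$-orthogonality of $\mathfrak K_{h}$ to $\mathfrak Z_{0,h}$) is precisely the proof of that cited theorem. Your remark that the canonical interpolant must be replaced by the smoothed bounded cochain projection is the right caveat and matches the cited literature.
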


Since $\delta_{h}$ is the adjoint operator of $\dd^{-}:~V_{h}^{-} \rightarrow V_{h}$, we have the following discrete Poincar\'e inequality for $\delta_{h}$ as well; cf. \cite{Chen2016MultiGrid} and \cite[Lemma 4]{Chenwu2017}.
\begin{lemma}[discrete Poincar\'e inequality for $\delta_{h}$]
Let $C_{p}$ be the constant in \eqref{eq:poincare-dis}. Then we have
$$
\|\omega_{h} \|  \leq C_{p}\|\delta_{h} \omega_{h}\|\qquad \forall~~\omega_{h} \in \mathfrak Z_{0,h}.
$$
\end{lemma}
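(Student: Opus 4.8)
The plan is to derive this dual Poincar\'e inequality directly from the already established inequality for $\dd$ (Lemma \ref{lem:poincare-dis}) by exploiting the adjoint relation \eqref{eq:delta_h} between $\delta_h$ and $\dd^-$, applied one position lower in the complex. The starting observation is that, by the discrete Hodge decomposition \eqref{eq:hodge_dec_dis} together with the identity $\mathfrak Z_{0,h} = \dd^- V_h^-$ (which relies on Assumption (A), forcing the discrete harmonic forms to vanish), every element of $\mathfrak Z_{0,h}$ lies in the range of $\dd^-$. This lets me trade a norm of $\omega_h$ for a norm of a $\dd^-$-preimage, and it is on that preimage that the Poincar\'e inequality for $\dd$ can be invoked.

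Concretely, first I would fix $\omega_h \in \mathfrak Z_{0,h}$ and choose a preimage $\tau_h \in V_h^-$ with $\dd^- \tau_h = \omega_h$; projecting $\tau_h$ $L^2$-orthogonally onto the complement of $\ker \dd^-$ in $V_h^-$, I may assume $\tau_h$ lies in that complement without altering $\dd^- \tau_h$. Second, I would compute
$$\|\omega_h\|^2 = \la \dd^- \tau_h, \omega_h\ra = \la \tau_h, \delta_h \omega_h\ra \leq \|\tau_h\|\,\|\delta_h \omega_h\|,$$
where the middle equality is exactly the defining adjoint property \eqref{eq:delta_h} of $\delta_h$ and the last step is Cauchy--Schwarz. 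Third, since $\tau_h$ sits in the orthogonal complement of $\ker\dd^-$ in $V_h^-$, the Poincar\'e inequality of Lemma \ref{lem:poincare-dis} (applied at the level of $V_h^-$, with the same constant $C_p$) gives $\|\tau_h\| \leq C_p \|\dd^- \tau_h\| = C_p \|\omega_h\|$. Substituting this bound and cancelling one factor of $\|\omega_h\|$ (the case $\omega_h = 0$ being trivial) yields $\|\omega_h\| \leq C_p \|\delta_h \omega_h\|$, as claimed. The degenerate case $k = 0$, where $\mathfrak Z_{0,h} = \{0\}$, is vacuous.

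The only genuine subtlety — and thus the main thing to get right — is the bookkeeping of where each inequality lives in the complex: Lemma \ref{lem:poincare-dis} is stated on $\mathfrak K_h \subset V_h$, whereas here I must use its analogue on the orthogonal complement of $\ker\dd^-$ inside $V_h^-$, and I should note that the discrete Poincar\'e constant can be taken uniform across the levels of the finite element de Rham subcomplex, so that writing a single $C_p$ is legitimate. Everything else is a short adjoint-plus-Cauchy--Schwarz argument, and no new analytic estimate is required.
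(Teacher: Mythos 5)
Your proposal is correct and is precisely the adjointness argument the paper has in mind: the paper gives no written proof, merely noting that the inequality follows because $\delta_{h}$ is the $L^{2}$-adjoint of $\dd^{-}$ and citing \cite{Chen2016MultiGrid} and \cite[Lemma 4]{Chenwu2017}, where this duality-plus-Cauchy--Schwarz argument is carried out. Your bookkeeping of the level shift (using the Poincar\'e inequality on $\mathfrak K_{h}^{-}\subset V_{h}^{-}$ with a uniform constant) and of the surjectivity $\mathfrak Z_{0,h}=\dd^{-}V_{h}^{-}$ under Assumption (A) is exactly right.
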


\subsection{A projection-based quasi-interpolation operator}
In this section, we introduce a projection-based quasi-interpolation operator briefly mentioned in \cite[Proposition 5.44]{Christiansen2011} which is a generalization of projection based operators introduced for $H^1, H(\curl)$ and $H(\div)$ spaces in \cite{Demkowiz2000,Demkowiz2003,Demkowiz2005,Oden1989}. We redefine this operator based on the Hodge decomposition and prove more properties of this operator: it is commuted with $\delta_{h}$, stable in both $\|\dd(\cdot)\|$ and $\|\delta_{h}(\cdot)\|$ norms, a $L^{2}$ orthogonal projection to the space $\mathfrak Z_{0,h}$, an orthogonal projection operator in the inner product $\la \dd(\cdot),\dd(\cdot)\ra$ on the subspace $\mathfrak K_h$, and has the same approximation properties as the classical interpolation operators.

For any given $v\in H_{0}\Lambda(\Omega)$, define $P_{h} v \in \mathfrak K_{h}$ such that
\begin{equation}
\label{eq:p-h-def}
\la \dd P_{h} v,\dd \phi_{h}\ra = \la\dd v,\dd\phi_{h}\ra \qquad\forall~~\phi_{h} \in \mathfrak K_{h}.
\end{equation}
Equation \eqref{eq:p-h-def} determines $P_{h}v \in \mathfrak K_{h}$ uniquely since the Poincar\'e inequality \eqref{eq:poincare-dis} implies $\la\dd(\cdot),\dd(\cdot)\ra$ is an inner product on the subspace $\mathfrak K_{h}$. 

For any $v \in H_{0}\Lambda(\Omega)$, the Hodge decomposition \eqref{eq:hodge} implies that there exist $v_{1} \in \mathfrak K^{-}$ and $v_{2} \in \mathfrak K$ such that
$$
v = \dd^{-} v_{1} \oplus^{\bot_{L^{2}}} v_{2} 
$$
The projection-based quasi-interpolation operator $I_{h}:~H_{0}\Lambda(\Omega) \rightarrow V_{h}$ is defined as:
\begin{equation}
\label{eq:quasi-int}
I_{h} v = \dd^{-} P_{h}^{-} v_{1}   \oplus^{\bot_{L^{2}}} P_{h} v_{2}.
\end{equation}
We have the following properties.
\begin{lemma}\label{eq:I-h-orth}
For any $v\in H_{0}\Lambda(\Omega)$, there hold
$$
\la I_{h} v,\dd^{-} \phi_{h} \ra = \la v,\dd^{-} \phi_{h} \ra\qquad \forall ~~\phi_{h} \in V_{h}^{-},
$$
and
$$
\la \dd I_{h} v,\dd\psi_{h}\ra = \la \dd v,\dd\psi_{h} \ra \qquad\forall~~\psi_{h} \in V_{h}.
$$
Here we denote $V_{h}^{-1} = \{0\}$.
\end{lemma}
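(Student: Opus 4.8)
The plan is to prove the two identities in reverse order: I would establish the $\dd$-orthogonality statement first, since it follows almost directly from the construction, and then deduce the $\dd^-$-orthogonality statement from its analogue one level down in the complex. The only ingredients required are the complex property $\dd\circ\dd^- = 0$, the two Hodge decompositions (the continuous one \eqref{eq:hodge} and the discrete one \eqref{eq:hodge_dec_dis}), and the defining Galerkin relation \eqref{eq:p-h-def} for $P_h$ together with its level-$(k-1)$ analogue for $P_h^-$ (which is well defined by the same Poincar\'e inequality \eqref{eq:poincare-dis} applied one degree lower).

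For the second identity, I would first apply $\dd$ to the definition \eqref{eq:quasi-int}. Because $\dd\dd^- P_h^- v_1 = 0$, this collapses to $\dd I_h v = \dd P_h v_2$. Given any test function $\psi_h \in V_h$, I decompose it via \eqref{eq:hodge_dec_dis} as $\psi_h = z_h \oplus \phi_h$ with $z_h \in \mathfrak Z_{0,h}$ and $\phi_h \in \mathfrak K_h$; since $\dd z_h = 0$, we may replace $\dd \psi_h$ by $\dd \phi_h$. The defining relation \eqref{eq:p-h-def} then yields $\la \dd P_h v_2, \dd \phi_h\ra = \la \dd v_2, \dd \phi_h\ra$. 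Finally, writing $v = \dd^- v_1 \oplus^{\bot_{L^2}} v_2$ and using $\dd\dd^- v_1 = 0$ gives $\dd v = \dd v_2$, so $\la \dd v_2, \dd \phi_h\ra = \la \dd v, \dd \phi_h\ra = \la \dd v, \dd \psi_h\ra$. Chaining these equalities proves $\la \dd I_h v, \dd \psi_h\ra = \la \dd v, \dd \psi_h\ra$.

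For the first identity, I would expand $I_h v = \dd^- P_h^- v_1 \oplus^{\bot_{L^2}} P_h v_2$ and test against $\dd^- \phi_h$ with $\phi_h \in V_h^-$. Since $\dd^- \phi_h \in \mathfrak Z_{0,h}$ while $P_h v_2 \in \mathfrak K_h$, the discrete orthogonality in \eqref{eq:hodge_dec_dis} annihilates the second term, leaving $\la I_h v, \dd^- \phi_h\ra = \la \dd^- P_h^- v_1, \dd^- \phi_h\ra$. On the other side, $\dd^- \phi_h \in \mathfrak Z_{0,h} \subset \mathfrak Z_0$, so the continuous orthogonality in \eqref{eq:hodge} makes the $v_2$-part drop out, giving $\la v, \dd^- \phi_h\ra = \la \dd^- v_1, \dd^- \phi_h\ra$. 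It then remains to match $\la \dd^- P_h^- v_1, \dd^- \phi_h\ra$ with $\la \dd^- v_1, \dd^- \phi_h\ra$: decomposing $\phi_h$ in $V_h^-$ and reducing $\dd^- \phi_h$ to its $\mathfrak K_h^-$-component (the kernel part is killed by $\dd^-$), the defining relation for $P_h^-$ closes the argument.

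The step I expect to require the most care is keeping the two orthogonality structures straight: the $\mathfrak K_h$-component must be removed using the \emph{discrete} decomposition \eqref{eq:hodge_dec_dis}, whereas the $v_2$-component must be removed using the \emph{continuous} decomposition \eqref{eq:hodge}, and the two are compatible precisely because $\mathfrak Z_{0,h} = \dd^- V_h^- \subset \mathfrak B_0 \subset \mathfrak Z_0$. A secondary point is that \eqref{eq:p-h-def} is stated only for test functions in $\mathfrak K_h$, so in both identities one must first reduce a general element of $V_h$ (resp. $V_h^-$) to its $\mathfrak K$-component before \eqref{eq:p-h-def} can be invoked; this reduction is legitimate exactly because $\dd$ (resp. $\dd^-$) annihilates the kernel summand. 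The degenerate case $k = 0$, in which $V_h^{-1} = \{0\}$ renders the first identity vacuous, needs only a one-line remark.
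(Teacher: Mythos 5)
Your proposal is correct and follows essentially the same route as the paper's proof: both identities are obtained by reducing the discrete test function to its $\mathfrak K_h$- (resp.\ $\mathfrak K_h^-$-) component via the discrete Hodge decomposition \eqref{eq:hodge_dec_dis}, invoking the defining Galerkin relation for $P_h$ (resp.\ $P_h^-$), and using the continuous decomposition $v = \dd^- v_1 \oplus^{\bot_{L^2}} v_2$ together with $\dd\circ\dd^- = 0$ to pass back to $v$. The only difference is cosmetic ordering — you prove the $\dd$-identity first and work from both ends toward the middle, while the paper writes each identity as a single chain of equalities — so no further comparison is needed.
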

\begin{proof}
For any $\phi_{h} \in V_{h}^{-}$, the discrete Hodge decomposition \eqref{eq:hodge_dec_dis} implies that there exists $\phi_{h,1} \in \mathfrak K_{h}^{-}$ such that $\dd^{-} \phi_{h} = \dd^{-} \phi_{h,1}$, therefore 
$$
\la I_{h }v,\dd^{-} \phi_{h} \ra = \la\dd^{-} P_{h}^{-} v_{1},\dd^{-}\phi_{h,1} \ra = \la \dd^{-} v_{1},\dd^{-} \phi_{h,1}\ra = \la v,\dd^{-}\phi_{1,h}\ra = \la v,\dd^{-}\phi_{h}\ra.
$$

For any $\psi_{h} \in V_{h}$, the discrete Hodge decomposition \eqref{eq:hodge_dec_dis} implies that there exists $\psi_{h,1} \in \mathfrak K_{h}$ such that $\dd\psi_{h} = \dd\psi_{h,1}$, 
$$
\la \dd I_{h}v,\dd\psi_{h}\ra = \la \dd P_{h} v_{2},\dd\psi_{h,1}\ra = \la \dd v_{2},\dd \psi_{h,1}\ra = \la \dd v,\dd\psi_{h} \ra.
$$
Then, the desired results are obtained.
\end{proof}

We have the following stability results of $I_{h}$.
\begin{lemma}\label{lem:bd-I-h}
We have the following stability results of $I_{h}$:
\begin{enumerate}
  \item For any $v \in H_{0}\Lambda(\Omega)$, there holds
$$
 \|\dd I_{h}v\| \leq \|\dd v\|.
$$
  \item For any $v \in H_{0}\Lambda(\Omega) \cap H^{*}\Lambda(\Omega)$, it holds 
  $$
  \delta_{h} I_{h} v = Q_{h}^{-}\delta v,
  $$
  where $Q_{h}:~L^{2}\Lambda(\Omega) \rightarrow V_{h}$ is the $L^{2}$ projection operator. Therefore
  $$
  \|\delta_{h}I_{h}v\|  \leq \|\delta v\|.
  $$
\end{enumerate}
\end{lemma}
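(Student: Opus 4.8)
The plan is to derive both bounds directly from the orthogonality identities of Lemma \ref{eq:I-h-orth}, the definition \eqref{eq:delta_h} of the discrete coderivative $\delta_{h}$, and the adjoint relation \eqref{eq:delta} between $\dd$ and $\delta$; no new interpolation estimate is required, so the whole lemma should reduce to short consequences of what is already proved.

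For part (1), I would note that $I_{h}v \in V_{h}$, which makes it a legitimate test function in the second identity of Lemma \ref{eq:I-h-orth}. Taking $\psi_{h} = I_{h}v$ there gives $\|\dd I_{h}v\|^{2} = \la \dd I_{h}v,\dd I_{h}v\ra = \la \dd v,\dd I_{h}v\ra$, and Cauchy--Schwarz yields $\|\dd I_{h}v\|^{2} \leq \|\dd v\|\,\|\dd I_{h}v\|$. Dividing by $\|\dd I_{h}v\|$ (the case $\dd I_{h}v = 0$ being trivial) produces the claim. This step is entirely routine.

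For part (2), the strategy is to show that $\delta_{h}I_{h}v$ and $Q_{h}^{-}\delta v$ solve the same variational problem in $V_{h}^{-}$. Starting from the definition \eqref{eq:delta_h} of $\delta_{h}$, for every $w_{h} \in V_{h}^{-}$ we have $\la \delta_{h}I_{h}v,w_{h}\ra = \la I_{h}v,\dd^{-}w_{h}\ra$. The first identity of Lemma \ref{eq:I-h-orth} then replaces $I_{h}v$ by $v$ on the right, giving $\la I_{h}v,\dd^{-}w_{h}\ra = \la v,\dd^{-}w_{h}\ra$. Since $v \in H^{*}\Lambda(\Omega)$, the adjoint relation \eqref{eq:delta} (applied with $\omega = w_{h}$ and $\mu = v$) converts this into $\la \dd^{-}w_{h},v\ra = \la w_{h},\delta v\ra$. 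Hence $\la \delta_{h}I_{h}v,w_{h}\ra = \la \delta v,w_{h}\ra$ for all $w_{h} \in V_{h}^{-}$, which is precisely the characterization of the $L^{2}$ projection $Q_{h}^{-}\delta v$; as both $\delta_{h}I_{h}v$ and $Q_{h}^{-}\delta v$ lie in $V_{h}^{-}$, they must coincide. The norm bound then follows from the contraction property of the $L^{2}$ projection, $\|\delta_{h}I_{h}v\| = \|Q_{h}^{-}\delta v\| \leq \|\delta v\|$.

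The only place demanding real care is the hypothesis $v \in H^{*}\Lambda(\Omega)$ in part (2): it is exactly what makes $\delta v$ well-defined in $L^{2}\Lambda^{-}$ and what legitimizes the integration-by-parts identity \eqref{eq:delta}, without which the middle equality $\la v,\dd^{-}w_{h}\ra = \la \delta v,w_{h}\ra$ is meaningless. Beyond tracking the differential-form degrees so that \eqref{eq:delta} is invoked in the correct direction, I anticipate no genuine obstacle; both statements are essentially one-line corollaries of the previously established orthogonality relations.
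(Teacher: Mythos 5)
Your proof is correct, and it is a mild but genuine reorganization of the paper's argument: where the paper goes back to the Hodge decomposition $v = \dd^- v_1 \oplus^{\bot_{L^2}} v_2$ and the defining property of $P_h$ in both parts, you channel everything through the two identities of Lemma \ref{eq:I-h-orth}. For part (1) the two routes are essentially the same computation (the paper's bound $\|\dd P_h v_2\|\le\|\dd v_2\|$ is exactly your Galerkin-orthogonality-plus-Cauchy--Schwarz step in disguise). For part (2) your version is actually cleaner: the paper unfolds $I_h v = \dd^- P_h^- v_1 \oplus^{\bot_{L^2}} P_h v_2$, tests only against $\phi_h \in \mathfrak K_h^-$, and then must invoke the discrete orthogonality $\mathfrak Z_{0,h}^- \bot \mathfrak K_h^-$ (together with the implicit fact that $Q_h^-\delta v$ has no component in $\mathfrak Z_{0,h}^-$) to upgrade agreement on $\mathfrak K_h^-$ to equality in $V_h^-$; by testing against all $w_h \in V_h^-$ via the first identity of Lemma \ref{eq:I-h-orth} and the adjoint relation \eqref{eq:delta}, you identify $\delta_h I_h v$ with $Q_h^-\delta v$ in one pass and skip that last step entirely. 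What the paper's decomposition-based route buys is that it does not presuppose Lemma \ref{eq:I-h-orth}; what yours buys is brevity and a proof that remains valid verbatim for any quasi-interpolant satisfying those two orthogonality identities.
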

\begin{proof}
(1) For any $v \in H_{0}\Lambda(\Omega)$, there exist $v_{1} \in \mathfrak K^{-}$ and $v_{2} \in \mathfrak K$ such that
$$
v = \dd^{-} v_{1}  \oplus^{\bot_{L^{2}}} v_{2}.
$$
We have
\begin{align*}
\|\dd I_{h} v\| = \|\dd P_{h} v_{2}\| \leq \|\dd v_{2}\| = \|\dd v\|.
\end{align*}

(2) For any $v \in H_{0}\Lambda(\Omega) \cap H^{*}\Lambda(\Omega)$. Using the facts that $P_h v_2 \in \delta_h^+ V_h^+$, $\delta_h\delta_h^+ = 0$ and $\delta_{h}I_{h} v \in \mathfrak K_{h}^{-}$, for any $\phi_{h} \in \mathfrak K_{h}^{-}$, we have
\begin{align*}
\la \delta_{h} I_{h}v,\phi_{h} \ra & = \la \delta_{h} \dd^{-}P_{h}^{-}v_{1},\phi_{h} \ra = \la \dd^{-}P_{h}^{-}v_{1} ,\dd^{-} \phi_{h} \ra \\
& = \la \dd^{-} v_{1},\dd^{-}\phi_{h} \ra = \la v,\dd^{-}\phi_{h}\ra \\
& = \la \delta v,\phi_{h}\ra  = \la Q_{h}^{-}\delta v, \phi_{h}\ra.
\end{align*}
Using the orthogonality result $\mathfrak Z_{0,h}^{-}\bot\mathfrak K_{h}^{-}$, we get the desired result.
\end{proof}

To get approximation properties of the projection-based quasi-interpolation operator $I_h$, we need the de Rham complexes for smooth differential forms established in \cite{Costabel2010a} and the following Sobolev embedding result
\begin{equation}\label{eq:ass-omega}
H_0\Lambda \cap H^*\Lambda \hookrightarrow H^1\Lambda,
\end{equation}
which holds when $\Omega$ is convex Lipschitz domain. 

\begin{lemma}\label{lem:app-I-h}
Assume that $\Omega$ is smooth enough such that \eqref{eq:ass-omega} holds, then for any $v \in H_{0}\Lambda(\Omega) \cap H^{r+1}\Lambda(\Omega)$ with $r\geq 1$, we have
\begin{align}
\label{eq:app-1}
\|v - I_{h} v\| & \lesssim h^{l} \|v\|_{l}\qquad\text{for}\quad 1 \leq l \leq r, \\
\label{eq:app-2}
\|\dd(v - I_{h}v)\| & \lesssim h^{l}\|\dd v\|_{l}\qquad\text{for}\quad 1\leq l \leq r.
\end{align}
\end{lemma}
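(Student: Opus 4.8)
The plan is to establish the two bounds separately: \eqref{eq:app-2} follows from a best-approximation identity for the $\dd$-seminorm, whereas \eqref{eq:app-1} requires a duality argument built on the regularity of the Hodge decomposition.

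First I would prove \eqref{eq:app-2}. Writing $v=\dd^-v_1\oplus^{\bot_{L^2}}v_2$ as in \eqref{eq:quasi-int} and using $\dd\dd^-=0$, one gets $\dd(v-I_hv)=\dd(v_2-P_hv_2)$ and $\dd v=\dd v_2$. The defining relation \eqref{eq:p-h-def} says that $\dd P_hv_2$ is the $L^2$-orthogonal projection of $\dd v_2$ onto $\dd\mathfrak K_h$; since $V_h=\mathfrak Z_{0,h}\oplus^{\bot_{L^2}}\mathfrak K_h$ and $\dd$ vanishes on $\mathfrak Z_{0,h}$, we have $\dd\mathfrak K_h=\dd V_h$ and hence $\|\dd(v-I_hv)\|=\min_{w_h\in V_h}\|\dd v-\dd w_h\|$. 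Taking $w_h=\pi_h v$ with $\pi_h$ the canonical interpolation, which commutes with $\dd$, the minimum is bounded by $\|\dd v-\pi_h(\dd v)\|$, and the classical estimate for $\pi_h$ applied to $\dd v$ yields $\|\dd(v-I_hv)\|\lesssim h^l\|\dd v\|_l$, i.e. \eqref{eq:app-2}.

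For \eqref{eq:app-1} I would first invoke the regularity of the Hodge decomposition: using the smooth de Rham complexes of \cite{Costabel2010a} together with the embedding \eqref{eq:ass-omega}, the components $\dd^-v_1$ and $v_2$ of $v$ inherit the regularity of $v$, with $\|\dd^-v_1\|_l+\|v_2\|_l\lesssim\|v\|_l$, and $v_1$ admits a regular potential amenable to the canonical interpolation. Splitting $v-I_hv=\dd^-(v_1-P_h^-v_1)+(v_2-P_hv_2)$, the exact part is controlled exactly as above: $\dd^-P_h^-v_1$ is the best $L^2$-approximation of the closed form $\dd^-v_1$ from $\dd^-V_h^-$, so by the commuting interpolation $\|\dd^-(v_1-P_h^-v_1)\|\le\|\dd^-v_1-\pi_h(\dd^-v_1)\|\lesssim h^l\|\dd^-v_1\|_l\lesssim h^l\|v\|_l$.

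The remaining co-exact term $\|v_2-P_hv_2\|$ is the crux, and here a plain energy/Poincar\'e bound loses one power of $h$: the discrete Poincar\'e inequality \eqref{eq:poincare-dis} only gives $\|\cdot\|\le C_p\|\dd\cdot\|$ on $\mathfrak K_h$, so combining it with Step~1 at regularity level $l-1$ would leave the suboptimal $h^{l-1}\|v\|_l$. I would therefore run an Aubin--Nitsche duality argument for the $\la\dd(\cdot),\dd(\cdot)\ra$-Galerkin projection $P_h$ on $\mathfrak K$: introduce the dual problem associated with $\delta\dd$, use the elliptic regularity provided by \eqref{eq:ass-omega} to recover one power of $h$, and combine with the energy bound from Step~1 applied at level $l-1$ to obtain $\|v_2-P_hv_2\|\lesssim h\,\|\dd(v_2-P_hv_2)\|\lesssim h^l\|v\|_l$. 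The main obstacle is precisely this duality step: because $\delta_h$ is nonconforming and $\mathfrak K_h\not\subset\mathfrak K$, the error $e=v_2-P_hv_2$ does not lie in $\mathfrak K$, so the argument must be supplemented by a consistency estimate for the nonconformity, and it relies essentially on the embedding \eqref{eq:ass-omega} and the smooth-complex regularity of \cite{Costabel2010a} to close the gain of one order.
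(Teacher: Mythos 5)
Your overall strategy coincides with the paper's: the same splitting $v=\dd^-v_1\oplus^{\bot_{L^2}}v_2$ with the regularity bound $\|v_2\|_l\lesssim\|v\|_l$ from \cite{Costabel2010a}, the identification of $\dd P_hv_2$ as the $L^2$-orthogonal projection of $\dd v_2$ onto $\dd V_h$ combined with the commuting canonical interpolation $\pi_h$ to get \eqref{eq:app-2}, and the reduction of \eqref{eq:app-1} to the two terms $\|\dd^-(v_1-P_h^-v_1)\|$ and $\|v_2-P_hv_2\|$. The only divergence is how the crux term $\|v_2-P_hv_2\|$ is closed: the paper observes that $v_2$ solves the auxiliary source problem \eqref{eq:aux-1} (and $v_1$ solves \eqref{eq:aux-2}), so that $P_hv_2$ and $P_h^-v_1$ are exactly the standard mixed finite element approximations of these problems, and then simply invokes the known $L^2$ error estimates from \cite{Brezzi;Fortin1991,Girault;Raviart2012}; you instead begin to reprove that estimate via Aubin--Nitsche duality and correctly flag, but do not resolve, the consistency term caused by $\mathfrak K_h\not\subset\mathfrak K$. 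That unfinished duality step is precisely the content of the cited mixed-FEM theory, so your argument can be completed either by carrying out that (routine but not trivial) nonconformity estimate or, as the paper does, by recognizing $P_hv_2$ as a standard mixed approximation and citing the literature. Your treatment of the exact part via best approximation in $\dd^-V_h^-$ is a slight simplification of the paper's corresponding citation and is correct.
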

\begin{proof}
We shall use the de Rham sequence in \cite{Costabel2010a}. Then for any $v \in H_{0}\Lambda(\Omega) \cap H^{r}\Lambda(\Omega)$, there exist $v_{1} \in \mathfrak K^{-} \cap H^{r}\Lambda^{-}(\Omega)$ and $v_{2} \in \mathfrak K \cap H^{r}\Lambda(\Omega)$ such that
$$
v = \dd^{-} v_{1} \oplus^{\bot_{L^{2}}} v_{2},
$$
and 
\begin{equation}\label{eq:noremRE}
\|v_2\|_l \lesssim \|v\|_l.
\end{equation}

Therefore,
$$
I_{h}v = \dd^{-} P_{h}^{-} v_{1} \oplus^{\bot_{L^{2}}} P_{h}v_{2}.
$$

Note that $v_{2}$ is the solution of the problem
\begin{equation}\label{eq:aux-1}
v_{2} = \delta^{+} s,\quad \dd v_{2} = q\quad\text{in}\quad\Omega,\qquad \tr s = 0\quad\text{on}\quad\partial\Omega.
\end{equation}
with
$q = \dd v$. Then $P_{h} v_{2}$ is the mixed finite element approximation of $v_{2}$ in $V_{h}$, the standard error estimates of the mixed finite element method \cite{Brezzi;Fortin1991,Girault;Raviart2012} implies that
$$
\|v_{2} - P_{h}v_{2}\| \lesssim h^{l}\|v_{2}\|_{l} 
\lesssim h^{l}\|v\|_{l},
$$ 
where in the second inequality, we have used \eqref{eq:noremRE}. 

Also note that $v_{1}$ is the solution of the problem
\begin{equation}
\label{eq:aux-2}
\delta\dd^{-} v_{1} = g,\quad \delta^{-} v_{1} = 0,\quad \tr v_{1}  = 0\quad\text{on}\quad \partial\Omega,
\end{equation}
with $g = \delta v$. The definition of $P_{h}^{-}$ implies that $P_{h}^{-}v_{1}$ is the mixed finite element approximation of $v_{1}$ in $V_{h}^{-}$, then the standard error estimates for the mixed finite element methods \cite{Brezzi;Fortin1991,Girault;Raviart2012} imply
$$
\|\dd^{-}(v_{1} - P_{h}^{-}v_{1})\|  \lesssim h^{l} \|\dd^{-}v_{1}\|_{l} \lesssim h^{l}\|g\|_{l-1}  = h^{l} \|\delta v\|_{l-1} \lesssim \|v\|_{l}.
$$
Therefore,
$$
\|v - I_{h} v\| \leq \|\dd^{-}(v_{1} - P_{h}^{-}v_{1})\|  + \|v_{2} - P_{h}v_{2}\| \lesssim h^{l}\|v\|_{l}.
$$

We turn to the estimates of \eqref{eq:app-2}. Since for any $\phi_{h} \in \mathfrak K_{h}$, it holds
\begin{align*}
\la\dd P_{h} v_{2},\dd\phi_{h}\ra & = \la\dd v_{2},\dd \phi_{h} \ra = \la \dd \pi_{h}v_{2} + \dd(I - \pi_{h})v_{2},\dd\phi_{h}\ra,
\end{align*}
where $\pi_h:~H_0\Lambda(\Omega)\cap H^r\Lambda(\Omega)\rightarrow V_h$ is the classical interpolation operator \cite{Arnold;Falk;Winther2006}.
Therefore, 
$$
\dd P_{h} v_{2} =  \dd\pi_{h} v_{2} + Q_{\mathfrak Z_{0,h}^{+}}\dd(I - \pi_{h})v_{2},
$$
where $Q_{\mathfrak Z_{0,h}^{+}}:~L^{2}\Lambda^{+} \rightarrow \mathfrak Z_{0,h}^{+}$ is the $L^{2}$ orthogonal projection operator.
Then, we have
\begin{align*}
\|\dd(v - I_{h}v)\| = \|\dd(v_{2} - P_{h}v_{2})\|
\leq \|(I - \pi_{h}^{+})\dd v_{2}\|  \lesssim h^{l} \|\dd v\|_{l}.
\end{align*}

\end{proof}

\subsection{Semi-discretization and error analysis}

The semi-discrete formulation \cite{Quenneville-Belair2015,Arnold2018} of \eqref{eq:mix1}-\eqref{eq:mix3} is: Given $f \in L^{2}((0,T),L^{2}\Lambda)$, find $\bs u_{h} = (\sigma_{h},\mu_{h},\omega_{h})^{\intercal}:~(0,T]\mapsto  V_{h}^{-} \times V_{h} \times V_{h}^{+} := \bs W_{h}$ such that
\begin{align}
\label{eq:mix1-dis}
\langle \sigma_{h,t},\tau_{h}\rangle - \langle\dd^{-}\tau_{h},\mu_{h}\rangle  & = 0\,\qquad\qquad\forall~~\tau_{h} \in V_{h}^{-}, \\
\label{eq:mix2-dis}
\langle \mu_{h,t},v_{h}\rangle + \langle \dd^{-}\sigma_{h},v_{h}\rangle + \langle \omega_{h},\dd v_{h}\rangle  & = \langle f,v_{h}\rangle\qquad \forall~~v_{h}\in V_{h}, \\
\label{eq:mix3-dis}
\langle \omega_{h,t},\phi_{h}\rangle - \langle \dd \mu_{h},\phi_{h} \rangle & = 0\,\qquad\qquad \forall~~\phi_{h} \in V_{h}^{+},
\end{align}
with initial values
$$
\sigma_{h}(\cdot,0) = I_{h}^{-}\delta u_{0},\quad \mu_{h}(\cdot,0) = I_{h}u_{1},\quad \omega_{h}(\cdot,0) = I_{h}^{+}\dd u_{0}. 
$$
Introduce
$$
\mathcal A_{h} = \begin{pmatrix}
0  & \delta_{h}  & 0 \\ -\dd^{-}   & 0  & -\delta_{h}^{+} \\
0  & \dd & 0
\end{pmatrix}
$$
\eqref{eq:mix1-dis} - \eqref{eq:mix3-dis} can be rewritten as
\begin{equation}
\label{eq:mix-semi-dis} 
\la \bs u_{h,t},\bs v_{h}\ra + \la\mathcal A_{h} \bs u_{h},\bs v_{h}\ra = \la\bs F,\bs v_{h} \ra \qquad\forall~~\bs v_{h} \in \bs W_{h}.
\end{equation}

Following the same line as the proof of Theorem \ref{the:energy_continuous}, we have the energy estimates.
\begin{theorem}\label{the:energy_continuous-dis}
Let $\bs u_{h} = (\sigma_{h},\mu_{h},\omega_{h})^{\intercal}\in \bs W_{h}$ be the solution of the mixed formulation \eqref{eq:mix1-dis}-\eqref{eq:mix3-dis} or \eqref{eq:mix-semi-dis}. Provided $f \in L^{1}((0,T),L^{2}\Lambda)$, we have the energy bound
\begin{equation}\label{eq:energy1-dis}
\sup\limits_{0\leq s\leq T} \|\bs u_{h}(\cdot,t)\| \leq \|\bs u_{h}(\cdot,0)\| + 2\int_{0}^{T}\|f(\cdot,s)\| \dd s.
\end{equation}
Furthermore, if $f\in W^{1,1}((0,T),L^{2}\Lambda)$, we have the bound
\begin{equation}\label{eq:energy2-dis}
\sup\limits_{0\leq t \leq T} \|\mathcal A_{h}\bs u_{h}(\cdot,t)\| \leq \|\mathcal A_{h}\bs u_{h}(\cdot,0)\| + 4\|f\|_{L^\infty(L^2)} + 2\int_{0}^{T} \|f_{t}\|\dd t.
\end{equation}
When $f = 0$, these inequalities become equalities and we have the energy conservation 
$$
\|\bs u_h(\cdot,t)\| = \|\bs u_h(\cdot,0)\|, and  \ \|\mathcal A_h\bs u_h(\cdot,t)\| = \|\mathcal A_h\bs u_h(\cdot,0)\|\quad \forall t > 0.
$$
\end{theorem}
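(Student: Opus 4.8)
The plan is to mirror the proof of Theorem~\ref{the:energy_continuous} line for line, replacing the continuous coderivatives $\delta,\delta^{+}$ by their discrete counterparts $\delta_{h},\delta_{h}^{+}$ and using the discrete adjoint relation \eqref{eq:delta_h} wherever the continuous proof invoked the integration-by-parts identity \eqref{eq:delta}. For the bound \eqref{eq:energy1-dis} I would test \eqref{eq:mix1-dis}--\eqref{eq:mix3-dis} with $\tau_{h}=\sigma_{h}$, $v_{h}=\mu_{h}$, $\phi_{h}=\omega_{h}$ and add the three identities. The spatial contributions cancel in pairs, namely $-\la\dd^{-}\sigma_{h},\mu_{h}\ra$ against $\la\dd^{-}\sigma_{h},\mu_{h}\ra$ and $\la\omega_{h},\dd\mu_{h}\ra$ against $-\la\dd\mu_{h},\omega_{h}\ra$, leaving
$$
\tfrac{1}{2}\tfrac{\dd}{\dd t}\|\bs u_{h}\|^{2}=\la f,\mu_{h}\ra .
$$
Integrating over $(0,s)$ and estimating $\la f,\mu_{h}\ra\le\|f\|\sup_{t}\|\bs u_{h}\|$ casts the result into the quadratic form $x^{2}\le\gamma^{2}+\beta x$ of Lemma~\ref{lem:greatwall}, from which \eqref{eq:energy1-dis} follows with $\gamma=\|\bs u_{h}(\cdot,0)\|$ and $\beta=2\int_{0}^{T}\|f\|\dd s$.

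For \eqref{eq:energy2-dis} the first step is to make $\|\mathcal A_{h}\bs u_{h}\|^{2}$ explicit. Computing $\mathcal A_{h}\bs u_{h}=(\delta_{h}\mu_{h},\,-\dd^{-}\sigma_{h}-\delta_{h}^{+}\omega_{h},\,\dd\mu_{h})^{\intercal}$ and using the discrete Hodge decomposition \eqref{eq:hodge_dec_dis}, which places $\dd^{-}\sigma_{h}\in\mathfrak Z_{0,h}$ and $\delta_{h}^{+}\omega_{h}\in\mathfrak K_{h}$ in $L^{2}$-orthogonal subspaces, the cross term drops and
$$
\|\mathcal A_{h}\bs u_{h}\|^{2}=\|\delta_{h}\mu_{h}\|^{2}+\|\dd^{-}\sigma_{h}\|^{2}+\|\delta_{h}^{+}\omega_{h}\|^{2}+\|\dd\mu_{h}\|^{2},
$$
the exact discrete analogue of $H^{2}$. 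I would then test \eqref{eq:mix1-dis}--\eqref{eq:mix3-dis} with $\tau_{h}=\delta_{h}\mu_{h,t}$, $v_{h}=-\dd^{-}\sigma_{h,t}-\delta_{h}^{+}\omega_{h,t}$ and $\phi_{h}=\dd\mu_{h,t}$, all admissible since $\delta_{h}\mu_{h,t}\in V_{h}^{-}$, $\dd^{-}\sigma_{h,t}+\delta_{h}^{+}\omega_{h,t}\in V_{h}$ and $\dd\mu_{h,t}\in V_{h}^{+}$. Applying \eqref{eq:delta_h} to rewrite $\la\sigma_{h,t},\delta_{h}\mu_{h,t}\ra=\la\dd^{-}\sigma_{h,t},\mu_{h,t}\ra$ and $\la\dd^{-}\delta_{h}\mu_{h,t},\mu_{h}\ra=\la\delta_{h}\mu_{h,t},\delta_{h}\mu_{h}\ra$, and likewise for the third equation, the mixed terms again cancel and the three identities add to
$$
\tfrac{1}{2}\tfrac{\dd}{\dd t}\|\mathcal A_{h}\bs u_{h}\|^{2}=\la f,\ \dd^{-}\sigma_{h,t}+\delta_{h}^{+}\omega_{h,t}\ra .
$$

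After integrating from $0$ to $s$, I would integrate by parts in time on the right-hand side exactly as in the continuous argument, producing the boundary terms $2\la f(\cdot,s),(\dd^{-}\sigma_{h}+\delta_{h}^{+}\omega_{h})(\cdot,s)\ra-2\la f(\cdot,0),(\dd^{-}\sigma_{h}+\delta_{h}^{+}\omega_{h})(\cdot,0)\ra$ together with $-2\int_{0}^{s}\la f_{t},\dd^{-}\sigma_{h}+\delta_{h}^{+}\omega_{h}\ra\dd t$. Since $\|\dd^{-}\sigma_{h}+\delta_{h}^{+}\omega_{h}\|\le\|\mathcal A_{h}\bs u_{h}\|$, each term is controlled by $\sup_{t}\|\mathcal A_{h}\bs u_{h}\|$ times the factor $4\|f\|_{L^{\infty}(L^{2})}+2\int_{0}^{T}\|f_{t}\|\dd t$, and Lemma~\ref{lem:greatwall} yields \eqref{eq:energy2-dis}; the conservation statement for $f=0$ is immediate because every right-hand side vanishes. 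The only genuine obstacle beyond bookkeeping is that the pairwise cancellation of the mixed terms and the orthogonality $\mathfrak Z_{0,h}\perp\mathfrak K_{h}$ (equivalently $\delta_{h}\delta_{h}^{+}=0$) must be justified from the discrete adjoint property \eqref{eq:delta_h} rather than from \eqref{eq:delta}, since $\delta_{h}$ is only a nonconforming discretization of $\delta$; this is precisely the structural property the FEEC framework guarantees and is what lets the semi-discretization inherit the skew-symmetry of $\mathcal A$.
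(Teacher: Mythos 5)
Your proposal is correct and follows exactly the route the paper intends: the paper's own ``proof'' of Theorem~\ref{the:energy_continuous-dis} is the single remark that it follows the same lines as Theorem~\ref{the:energy_continuous}, and you carry out that mirroring faithfully, with the right test functions, the discrete adjoint relation \eqref{eq:delta_h} in place of \eqref{eq:delta}, and the needed structural facts ($\dd\circ\dd^{-}=0$ and $\mathfrak Z_{0,h}\perp_{L^{2}}\mathfrak K_{h}$) correctly identified and justified. No gaps.
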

\begin{remark}
Since \eqref{eq:mix1-dis}-\eqref{eq:mix3-dis} (or \eqref{eq:mix-semi-dis}) is a linear system, Theorem \ref{the:energy_continuous-dis} implies the existence and uniqueness of the solution at any time level $t \in (0,T]$. Also, when the source term $f = 0$, Theorem \ref{the:energy_continuous-dis} implies that the energies $\|\bs u_{h}(\cdot,t)\|$ and $\|\mathcal A_{h}\bs u_{h}(\cdot,t)\|$ are preserved exactly.
\end{remark}

The rest of this section will focus on the error estimates of the semi-discretization \eqref{eq:mix1-dis}-\eqref{eq:mix3-dis} (or its simplified form \eqref{eq:mix-semi-dis}). We denote
$$
\mathcal I_{h} = \begin{pmatrix}
I_{h}^{-} & 0 & 0 \\
0 & I_{h} & 0 \\
0 & 0 & I_{h}^{+}
\end{pmatrix}.  
$$
Then for any $\bs v_{h} = (\tau_{h},v_{h},\phi_{h})^{\intercal} \in\bs W_{h}$, \eqref{eq:hodge-operator-form} is equivalent to
\begin{equation}\label{eq:hodge-inter-form}
\la\mathcal I_{h}\bs u_{t},\bs v_{h}\ra + \la \mathcal A_{h}\mathcal I_{h}\bs u,\bs v_{h}\ra  = \la\bs F,\bs v_{h}\ra + \la \Theta_{h,t},\bs v_{h} \ra + \la\mathcal A_{h}\mathcal I_{h}\bs u - \mathcal A\bs u,\bs v_{h}\ra 
\end{equation}
with
$$
\Theta_{h} = \mathcal I_{h}\bs u - \bs u.
$$
Using the properties of the projection-based quasi-interpolation operator $I_{h}$, we obtain
\begin{align*}
\la \mathcal A_{h} \mathcal I_{h} \bs u - \mathcal A \bs u,\bs v_{h} \ra & = \la I_{h}\mu - \mu,\dd^{-} \tau_{h} \ra - \la \dd^{-}(I_{h}^{-} - I)\sigma,v_{h}\ra \\
& \quad - \la I_{h}^{+} \omega - \omega,\dd v_{h} \ra + \la \dd (I_{h} - I) \mu,\phi_{h}\ra \\
& = -\la \dd^{-}(I_{h}^{-} - I)\sigma,v_{h}\ra + \la \dd (I_{h} - I)\mu,\phi_{h}\ra \\
& = \la \bs G,\bs V_{h}\ra,
\end{align*}
with $\bs G = (0,-\dd^{-}(\pi_{h}^{-} - I) \sigma,\dd (I_{h} - I)\mu)^{\intercal}$. Denote 
$$
\mathcal E_{h} = \mathcal I_{h}\bs u - \bs u_{h}
$$
and subtracting the semi-discrete form \eqref{eq:mix-semi-dis} from \eqref{eq:hodge-inter-form}, we get 
\begin{align}
\label{eq:err-semi}
\la\mathcal E_{h,t},\bs v_{h}\ra + \la\mathcal A_{h}\mathcal E_{h},\bs v_{h}\ra  = \la\Theta_{h,t} + \bs G,\bs v_{h}\ra\qquad\forall~~\bs v_{h} \in \bs W_{h}.
\end{align}

We have the following estimate of $\mathcal E_{h}$.
\begin{lemma}\label{lem:err-L-2-dis}
Suppose the exact solution $\bs u = (\sigma,\mu,\omega)$ of \eqref{eq:hodge-operator-form} has time derivatives $\sigma_{t} \in L^{1}((0,T),H^{r}\Lambda^{-})$, $\mu_{t} \in L^{1}((0,T),H^{r}\Lambda)$ and $\omega_{t} \in L^{1}((0,T),H^{r}\Lambda^{+})$ with $r \geq 1$. Then, for any $1\leq m \leq r$ and $t \in [0,T]$, we have the bound
$$
\|\mathcal E_{h}(\cdot,t)\| \lesssim h^{m} \int_{0}^{T} \left( \|\bs u_{t}\|_{m} + \|\dd^{-}\sigma\|_{m} + \|\dd\mu\|_{m}  \right) \dd t.
$$
\end{lemma}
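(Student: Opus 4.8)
The plan is to reproduce, at the discrete level and for the error, the energy argument already used for Theorem~\ref{the:energy_continuous-dis}. First I would test the error equation~\eqref{eq:err-semi} with the special choice $\bs v_h = \mathcal E_h$. The structural fact that makes this work is the skew-symmetry of $\mathcal A_h$ on $\bs W_h$: for $\bs w = (\sigma,\mu,\omega)^\intercal$ one computes
\[
\la \mathcal A_h \bs w,\bs w\ra = \la \delta_h\mu,\sigma\ra - \la\dd^-\sigma,\mu\ra - \la\delta_h^+\omega,\mu\ra + \la\dd\mu,\omega\ra ,
\]
and the defining adjoint relation~\eqref{eq:delta_h} for $\delta_h$ (together with its one-level-up analogue $\la\delta_h^+\omega,\mu\ra = \la\omega,\dd\mu\ra$) makes the terms cancel in pairs, so $\la\mathcal A_h\mathcal E_h,\mathcal E_h\ra = 0$. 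Thus the test reduces~\eqref{eq:err-semi} to $\tfrac12\tfrac{\dd}{\dd t}\|\mathcal E_h\|^2 = \la\Theta_{h,t}+\bs G,\mathcal E_h\ra$.

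Next I would integrate this identity over $(0,s)$ for arbitrary $s\in[0,T]$. The initial error vanishes: comparing $\mathcal E_h = \mathcal I_h\bs u - \bs u_h$ with the prescribed discrete data $\sigma_h(\cdot,0) = I_h^-\delta u_0$, $\mu_h(\cdot,0) = I_h u_1$, $\omega_h(\cdot,0) = I_h^+\dd u_0$ and the continuous data $\sigma_0 = \delta u_0$, $\mu_0 = u_1$, $\omega_0 = \dd u_0$ shows $\mathcal E_h(\cdot,0) = \mathcal I_h\bs u(\cdot,0) - \bs u_h(\cdot,0) = 0$. Applying Cauchy--Schwarz and then bounding by the supremum gives
\[
\|\mathcal E_h(\cdot,s)\|^2 \le \Big(2\int_0^T\|\Theta_{h,t}+\bs G\|\dd t\Big)\,\sup_{0\le t\le T}\|\mathcal E_h(\cdot,t)\| ,
\]
so that Lemma~\ref{lem:greatwall} (with $\gamma = 0$, $\beta = 2\int_0^T\|\Theta_{h,t}+\bs G\|\dd t$) yields $\|\mathcal E_h(\cdot,s)\| \le 2\int_0^T\|\Theta_{h,t}+\bs G\|\dd t$, exactly the factor $2$ appearing in the continuous energy bounds.

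It then remains to estimate the two driving terms via the approximation properties of Lemma~\ref{lem:app-I-h}. Since $\mathcal I_h$ commutes with $\partial_t$, we have $\Theta_{h,t} = (\mathcal I_h - I)\bs u_t$, and applying~\eqref{eq:app-1} componentwise gives $\|\Theta_{h,t}\| \lesssim h^m\|\bs u_t\|_m$. For $\bs G = (0,-\dd^-(I_h^- - I)\sigma,\dd(I_h - I)\mu)^\intercal$, the two nonzero entries are controlled by~\eqref{eq:app-2}: $\|\dd^-(I_h^- - I)\sigma\| \lesssim h^m\|\dd^-\sigma\|_m$ and $\|\dd(I_h - I)\mu\| \lesssim h^m\|\dd\mu\|_m$. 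Inserting these bounds into the displayed estimate and keeping the time integration produces the claimed inequality.

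I expect the one genuinely substantive step to be the skew-symmetry verification for $\mathcal A_h$, since it is precisely here that the discrete adjoint structure~\eqref{eq:delta_h} and the pairing of the off-diagonal blocks $\delta_h\!\leftrightarrow\!\dd^-$ and $\delta_h^+\!\leftrightarrow\!\dd$ are used; had $\delta_h$ been replaced by a nonadjoint discretization of $\delta$, the cancellation would fail and the clean $L^2$ control of $\mathcal E_h$ would be lost. The remaining steps are the routine Kirby-lemma energy trick and the direct application of the interpolation estimates established earlier.
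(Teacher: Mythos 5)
Your proposal is correct and follows essentially the same route as the paper: the paper simply invokes its semi-discrete energy estimate (Theorem~\ref{the:energy_continuous-dis}) applied to the error equation~\eqref{eq:err-semi} together with $\mathcal E_h(\cdot,0)=0$, whereas you unwind that citation by redoing the underlying energy argument (skew-symmetry of $\mathcal A_h$ via~\eqref{eq:delta_h}, integration in time, and Lemma~\ref{lem:greatwall}) explicitly. The concluding bounds on $\Theta_{h,t}$ and $\bs G$ via Lemma~\ref{lem:app-I-h} match the paper's final step.
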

\begin{proof}
The fact that $\mathcal E_{h}(\cdot,0) =0$ and Theorem \ref{the:energy_continuous-dis} implies 
$$
\sup\limits_{0\leq t \leq T}\|\mathcal E_{h}(\cdot,t)\| \leq 2\int_{0}^{T}\|\Theta_{h,t} + \bs G\|\dd t.
$$
Using the triangle inequality and the approximation properties of $I_{h}$, the desired result follows.
\end{proof}

We then obtain the following estimates by the triangle inequality, Lemma \ref{lem:app-I-h}, and \ref{the:energy_continuous-dis}.
\begin{theorem}\label{the:err-semi-L2}
Suppose the exact solution $\bs u = (\sigma,\mu,\omega)^{\intercal}$ of \eqref{eq:hodge-operator-form} has time derivatives $\sigma_{t} \in L^{1}((0,T),H^{r}\Lambda^{-})$, $\mu_{t} \in L^{1}((0,T),H^{r}\Lambda)$ and $\omega_{t} \in L^{1}((0,T),H^{r}\Lambda^{+})$ with $r \geq 1$. 
Let $\bs u_h $ be the exact solution of \eqref{eq:mix-semi-dis}.
Then, for any $1\leq m \leq r$ and $t \in [0,T]$, we have the bound
\begin{align*}
\|\bs u(\cdot,t) - \bs u_{h}(\cdot,t)\| & \lesssim h^{m}\left( \|\bs u\|_{L^{\infty}(H^{m})} +\int_{0}^{T} (\|\bs u_{t}\|_{m} + \|\dd^{-}\sigma\|_{m} + \|\dd\mu\|_{m})\dd t  \right) .
\end{align*}
\end{theorem}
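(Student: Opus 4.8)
The plan is to estimate $\bs u - \bs u_h$ by inserting the interpolant $\mathcal I_h\bs u$ and splitting
$$
\bs u - \bs u_h = (\bs u - \mathcal I_h\bs u) + (\mathcal I_h\bs u - \bs u_h) = -\Theta_h + \mathcal E_h,
$$
then bounding each piece separately and recombining by the triangle inequality. The whole argument rests on results already in hand: the approximation property of the projection-based operator (Lemma \ref{lem:app-I-h}) controls the interpolation part $\Theta_h$, while the discrete energy stability (Theorem \ref{the:energy_continuous-dis}), applied through Lemma \ref{lem:err-L-2-dis}, controls the error-projection part $\mathcal E_h$.

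For the interpolation term I would apply Lemma \ref{lem:app-I-h} componentwise. Since $\mathcal I_h = \operatorname{diag}(I_h^-, I_h, I_h^+)$, estimate \eqref{eq:app-1} applied to $\sigma$, $\mu$, $\omega$ (each lying in the appropriate $H_0\Lambda\cap H^{r+1}\Lambda$ space) gives $\|\bs u - \mathcal I_h\bs u\| \lesssim h^m\|\bs u\|_m$ for $1\le m\le r$; taking the supremum over $t\in[0,T]$ turns the right-hand side into $h^m\|\bs u\|_{L^\infty(H^m)}$. For the term $\mathcal E_h = \mathcal I_h\bs u - \bs u_h$ I would simply invoke Lemma \ref{lem:err-L-2-dis}, which yields
$$
\|\mathcal E_h(\cdot,t)\| \lesssim h^m\int_0^T\left(\|\bs u_t\|_m + \|\dd^-\sigma\|_m + \|\dd\mu\|_m\right)\dd t.
$$
Adding the two bounds and taking the supremum over $t$ reproduces exactly the claimed estimate, so the theorem follows immediately once the preceding lemmas are available.

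The conceptual heavy lifting is therefore not in this theorem itself but in Lemma \ref{lem:err-L-2-dis} and, behind it, in the derivation of the clean error equation \eqref{eq:err-semi}. The main obstacle---already resolved upstream---is that the naive consistency terms $\la I_h\mu - \mu, \dd^-\tau_h\ra$ and $\la I_h^+\omega - \omega, \dd v_h\ra$ would ordinarily survive in the residual and, since neither the canonical interpolant commutes with $\delta_h$ nor $Q_h$ commutes with $\dd$, would cost a power of $h$ and destroy optimality. The orthogonality and commuting properties of the projection-based operator $I_h$ (Lemmas \ref{eq:I-h-orth} and \ref{lem:bd-I-h}) force these terms to vanish, leaving only the optimal-order consistency contribution $\bs G$, which is directly controlled by the approximation estimates. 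This is the step one must check with care; the remaining steps here are routine applications of the triangle inequality together with the energy bound \eqref{eq:energy1-dis}.

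I would then remark that, because the energy estimate \eqref{eq:energy1-dis} carries no growing-in-time prefactor, the constant hidden in $\lesssim$ is independent of $T$, so the bound is robust for long-time computation---consistent with the discussion preceding the theorem and with the numerical evidence in Table \ref{EX1-3}.
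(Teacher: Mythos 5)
Your proposal is correct and follows essentially the same route as the paper: the paper obtains Theorem \ref{the:err-semi-L2} exactly by the triangle inequality applied to the splitting $\bs u - \bs u_h = (\bs u - \mathcal I_h\bs u) + \mathcal E_h$, with the interpolation part controlled by Lemma \ref{lem:app-I-h} and the discrete part by Lemma \ref{lem:err-L-2-dis} via the energy bound of Theorem \ref{the:energy_continuous-dis}. Your observation that the real work lies in the cancellation of the consistency terms through the properties of $I_h$ in the error equation \eqref{eq:err-semi} matches the paper's emphasis as well.
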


\begin{remark}
In this theorem, the convergence order $m$ is determined by the polynomial order of the finite element spaces preserved. 
\end{remark}
\begin{remark}
It should be point out that in \cite{Quenneville-Belair2015}, Quenneville has obtained an error estimates for the semi-discretization in the form
\begin{align*}
\|\bs u - \bs u_h\|_{L^\infty(L^2)} & \leq \|\pi_h\bs u_0 - \bs u_{0,h}\| + \|\pi_h\bs u - \bs u\|_{L^\infty(L^2)} \\
&\quad + (1+T)(\|(\pi_h\bs u - \bs u)(\cdot,0)\| + \|\pi_h\bs u_t - \bs u_t\|_{L^1(L^2)}),
\end{align*}
where $\pi_h$ is an elliptic projection operator.
Comparing with this result, ours do not have the factor $1+T$ and thus is more robust to the time variable. $\qed$
\end{remark}
	

We now give to the error estimates in the energy norm $\|\mathcal A \bs u - \mathcal A_h\bs U_h\|$, which is equivalent to $\|d^-\sigma - d^-\sigma_h \| + \|\dd\mu - \dd \mu_h\| + \|\delta\mu - \delta_h\mu\| + \|\delta^+\omega - \delta_h^+\omega_h\|$. Note that it is possible that the $L^2$-norm is small but the energy norm is larger due to the  small oscillation in the error. We shall show the energy norm is still of the same order of convergence. We give the estimate of $\|\mathcal A_{h}\mathcal E_{h}\|$ first.
By Lemma \ref{lem:app-I-h} and \ref{the:energy_continuous-dis}, we have the following estimate.
\begin{lemma}\label{lem:err-H-h}
Suppose the exact solution $\bs u = (\sigma,\mu,\omega)^{\intercal}$ of \eqref{eq:hodge-operator-form} has time derivatives $\sigma_{tt} \in L^{1}((0,T),H^{r}\Lambda^{-})$, $\mu_{tt} \in L^{1}((0,T),H^{r}\Lambda)$ and $\omega_{tt} \in L^{1}((0,T),H^{r}\Lambda^{+})$ with $r \geq 1$. Then, for any $1\leq m \leq r$ and $t \in [0,T]$, we have the bound
\begin{align*}
\|\mathcal A_{h}\mathcal E_{h}(\cdot,t)\| &\lesssim h^m(\|\bs u_t\|_{L^\infty(H^m)} + \|\dd^-\sigma\|_{L^\infty(H^m)} + \|\dd\mu\|_{L^\infty(H^m)} )\\
&\quad  + h^{m}\int_{0}^{T} \left( \|\bs u_{tt}\|_{m} + \|\dd^{-} \sigma_{t}\|_{m} + \|\dd \mu_{t}\|_{m}  \right) \dd t
\end{align*}
\end{lemma}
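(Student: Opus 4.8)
The plan is to differentiate the error equation \eqref{eq:err-semi} once in time, apply the first (i.e.\ $L^2$-type) energy bound of Theorem \ref{the:energy_continuous-dis} to $\mathcal E_{h,t}$, and then recover the energy norm $\|\mathcal A_h\mathcal E_h\|$ algebraically from the equation itself. First I would note that since both $\mathcal A_h\mathcal E_h$ and $\mathcal E_{h,t}$ lie in $\bs W_h$, equation \eqref{eq:err-semi} is equivalent to the pointwise-in-time identity
$$
\mathcal A_h\mathcal E_h + \mathcal E_{h,t} = \bs Q_h(\Theta_{h,t} + \bs G),
$$
where $\bs Q_h$ is the $L^2$-orthogonal projection onto $\bs W_h$. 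Consequently $\|\mathcal A_h\mathcal E_h(\cdot,t)\| \le \|\mathcal E_{h,t}(\cdot,t)\| + \|\Theta_{h,t}(\cdot,t) + \bs G(\cdot,t)\|$, so the whole task reduces to a uniform bound on $\|\mathcal E_{h,t}\|$.

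To obtain that bound, differentiate \eqref{eq:err-semi} in time; because the spatial operators $\mathcal A_h$ and $\mathcal I_h$ are time-independent, $\mathcal E_{h,t}$ satisfies the same evolution equation as $\mathcal E_h$ but with data $\Theta_{h,tt} + \bs G_t$ on the right. Testing with $\bs v_h = \mathcal E_{h,t}$ and using the skew-symmetry $\la\mathcal A_h w,w\ra = 0$ gives $\tfrac12\tfrac{\dd}{\dd t}\|\mathcal E_{h,t}\|^2 = \la\Theta_{h,tt} + \bs G_t,\mathcal E_{h,t}\ra$; integrating from $0$ to $s$ and invoking Lemma \ref{lem:greatwall} exactly as in the proof of Theorem \ref{the:energy_continuous} yields
$$
\sup_{0\le t\le T}\|\mathcal E_{h,t}(\cdot,t)\| \le \|\mathcal E_{h,t}(\cdot,0)\| + 2\int_0^T\|\Theta_{h,tt} + \bs G_t\|\dd t.
$$
The initial velocity is controlled by evaluating the identity of the first paragraph at $t=0$: since $\mathcal E_h(\cdot,0)=0$ by the choice of discrete initial data, we have $\mathcal A_h\mathcal E_h(\cdot,0)=0$ and therefore $\|\mathcal E_{h,t}(\cdot,0)\| \le \|\Theta_{h,t}(\cdot,0) + \bs G(\cdot,0)\|$.

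It then remains to insert the approximation estimates. Writing $\Theta_{h,t} = \mathcal I_h\bs u_t - \bs u_t$ and $\Theta_{h,tt} = \mathcal I_h\bs u_{tt} - \bs u_{tt}$, Lemma \ref{lem:app-I-h} gives $\|\Theta_{h,t}\| \lesssim h^m\|\bs u_t\|_m$ and $\|\Theta_{h,tt}\| \lesssim h^m\|\bs u_{tt}\|_m$, while the components of $\bs G = (0,-\dd^-(\pi_h^- - I)\sigma,\dd(I_h - I)\mu)^{\intercal}$ and of $\bs G_t$ are handled by \eqref{eq:app-2} of Lemma \ref{lem:app-I-h} together with the standard interpolation bound for $\pi_h^-$, giving $\|\bs G\| \lesssim h^m(\|\dd^-\sigma\|_m + \|\dd\mu\|_m)$ and $\|\bs G_t\| \lesssim h^m(\|\dd^-\sigma_t\|_m + \|\dd\mu_t\|_m)$. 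Combining the three displayed inequalities, the two pointwise-in-time terms furnish the $L^\infty(H^m)$ part of the asserted bound and the integral term furnishes the remainder, completing the estimate.

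The main obstacle is the step in the second paragraph: recognizing that one must differentiate the error equation in time and estimate $\mathcal E_{h,t}$, rather than attempt a direct energy argument on $\mathcal A_h\mathcal E_h$. A naive test of \eqref{eq:err-semi} with $\bs v_h = \mathcal A_h\mathcal E_h$ does not close, because the term $\la\mathcal A_h\mathcal E_{h,t},\mathcal A_h\mathcal E_h\ra$ carries no sign and cannot be absorbed. The remedy—differentiating once, exploiting skew-symmetry to kill the advection-type term, and then transferring control from $\|\mathcal E_{h,t}\|$ back to $\|\mathcal A_h\mathcal E_h\|$ through the $L^2$-projection identity—together with the careful treatment of the nonzero initial velocity $\mathcal E_{h,t}(\cdot,0)$, is precisely where the conforming discrete Hodge structure and the skew-symmetry of $\mathcal A_h$ are essential.
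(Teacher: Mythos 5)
Your proof is correct, but it follows a genuinely different route from the paper's. The paper disposes of the lemma in two lines: since the error equation \eqref{eq:err-semi} has exactly the form of the semi-discrete system \eqref{eq:mix-semi-dis} with source $\Theta_{h,t}+\bs G$ and with $\mathcal E_h(\cdot,0)=0$, it simply invokes the second a priori estimate \eqref{eq:energy2-dis} of Theorem \ref{the:energy_continuous-dis}, whose underlying mechanism is to test the equation with $\mathcal A_h$ applied to the time derivative, obtain $\tfrac12\tfrac{\dd}{\dd t}\|\mathcal A_h\mathcal E_h\|^2=\la \Theta_{h,t}+\bs G,\,\cdot\,\ra$, and then integrate by parts \emph{in time} to shift a derivative onto the data; that integration by parts is what produces the $4\|\cdot\|_{L^\infty(L^2)}$ term. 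You instead differentiate \eqref{eq:err-semi} in time, apply the first ($L^2$-type) estimate \eqref{eq:energy1-dis} to $\mathcal E_{h,t}$ with data $\Theta_{h,tt}+\bs G_t$, and then recover $\|\mathcal A_h\mathcal E_h\|$ algebraically from the pointwise form $\mathcal E_{h,t}+\mathcal A_h\mathcal E_h=\bs Q_h(\Theta_{h,t}+\bs G)$, handling the nonzero initial velocity via $\mathcal A_h\mathcal E_h(\cdot,0)=0$. Both arguments rest on the same two pillars (skew-symmetry of $\mathcal A_h$ and Lemma \ref{lem:greatwall}) and require the same regularity and the same approximation input from Lemma \ref{lem:app-I-h}; yours avoids the temporal integration by parts at the cost of justifying differentiability of the error equation (which the assumed regularity of $\bs u_{tt}$ and the linear ODE structure of the semi-discrete system do provide) and even yields a slightly smaller constant ($2$ rather than $4$ in front of the $L^\infty(L^2)$ term). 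Your closing observation that testing directly with $\mathcal A_h\mathcal E_h$ does not close is accurate, though the paper's alternative is to test with $\mathcal A_h\mathcal E_{h,t}$ rather than to differentiate first.
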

\begin{proof}
Using the fact that $\mathcal E_{h}(\cdot,0) = 0$ and Theorem \ref{the:energy_continuous-dis}, we have
$$
\|\mathcal A_{h}\mathcal E_{h}(\cdot,t)\| \leq 4 \|\Theta_{h,t} + \bs G\|_{L^\infty(L^2)} +  2\int_{0}^{T}\|\Theta_{h,tt} + \bs G_{t}\|\dd t.
$$
Triangle inequality and Lemma \ref{lem:app-I-h} imply the desired result.
\end{proof}

\begin{theorem}\label{the:err-semi-A}
Suppose the exact solution $\bs u = (\sigma,\mu,\omega)^{\intercal}$ of \eqref{eq:hodge-operator-form} has time derivatives $\sigma_{tt} \in L^{1}((0,T),H^{r}\Lambda^{-})$, $\mu_{tt} \in L^{1}((0,T),H^{r}\Lambda)$ and $\omega_{tt} \in L^{1}((0,T),H^{r}\Lambda^{+})$ with $r \geq 1$. Then, for any $1\leq m \leq r$ and $t \in [0,T]$, we have the bound
\begin{align*}
\|\mathcal A_{h}\bs u_{h}(\cdot,t) - \mathcal A\bs u(\cdot,t)\|  \lesssim & h^m {\large [} \, \|\bs u_t\|_{L^\infty(H^m)} + \|\dd^-\sigma\|_{L^\infty(H^m)} + \|\dd\mu\|_{L^\infty(H^m)} \\
& \|\mathcal A \bs u\|_{L^{\infty}(H^{m})} + \int_{0}^{T}(\|\bs u_{tt}\|_{m} + \|\dd^{-}\sigma_{t}\|_{m} + \|\dd\mu_{t}\|) \dd t \, {\large ]}.
\end{align*}
\end{theorem}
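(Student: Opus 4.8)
The plan is to route the energy-norm error through the quasi-interpolant $\mathcal I_h\bs u$ and estimate the two resulting contributions separately. Since $\mathcal E_h = \mathcal I_h\bs u - \bs u_h$, I would write
\begin{equation*}
\mathcal A_h\bs u_h - \mathcal A\bs u = -\,\mathcal A_h\mathcal E_h + \bigl(\mathcal A_h\mathcal I_h\bs u - \mathcal A\bs u\bigr),
\end{equation*}
so that the triangle inequality reduces the theorem to bounding $\|\mathcal A_h\mathcal E_h(\cdot,t)\|$ and the interpolation consistency term $\|\mathcal A_h\mathcal I_h\bs u(\cdot,t) - \mathcal A\bs u(\cdot,t)\|$. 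The first term is controlled directly by Lemma \ref{lem:err-H-h}, which already supplies the entire $h^m\int_0^T(\|\bs u_{tt}\|_m + \|\dd^-\sigma_t\|_m + \|\dd\mu_t\|_m)\dd t$ part of the claimed bound together with the $L^\infty(H^m)$ norms of $\bs u_t$, $\dd^-\sigma$ and $\dd\mu$.

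The remaining work is the consistency term, which I would estimate componentwise using the explicit forms of $\mathcal A$ and $\mathcal A_h$. Writing $\mathcal I_h\bs u = (I_h^-\sigma, I_h\mu, I_h^+\omega)^\intercal$, the three components of $\mathcal A_h\mathcal I_h\bs u - \mathcal A\bs u$ are $\delta_h I_h\mu - \delta\mu$, $-\dd^-(I_h^- - I)\sigma - (\delta_h^+ I_h^+\omega - \delta^+\omega)$, and $\dd(I_h - I)\mu$. For the exterior-derivative terms I would invoke the approximation estimate \eqref{eq:app-2} of Lemma \ref{lem:app-I-h}, giving $\|\dd^-(I_h^- - I)\sigma\| \lesssim h^m\|\dd^-\sigma\|_m$ and $\|\dd(I_h - I)\mu\| \lesssim h^m\|\dd\mu\|_m$. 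For the coderivative terms I would use the commuting identity $\delta_h I_h v = Q_h^-\delta v$ from Lemma \ref{lem:bd-I-h}(2), which converts $\delta_h I_h\mu - \delta\mu$ and $\delta_h^+ I_h^+\omega - \delta^+\omega$ into the $L^2$-projection errors $(Q_h^- - I)\delta\mu$ and $(Q_h - I)\delta^+\omega$; standard projection estimates then bound these by $h^m\|\delta\mu\|_m$ and $h^m\|\delta^+\omega\|_m$. Since $\mathcal A\bs u = (\delta\mu, -\dd^-\sigma - \delta^+\omega, \dd\mu)^\intercal$, the sum $\|\delta\mu\|_m + \|\delta^+\omega\|_m$ is dominated by $\|\mathcal A\bs u\|_m + \|\dd^-\sigma\|_m$, so taking the supremum in $t$ bounds the consistency term by $h^m(\|\mathcal A\bs u\|_{L^\infty(H^m)} + \|\dd^-\sigma\|_{L^\infty(H^m)} + \|\dd\mu\|_{L^\infty(H^m)})$. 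Adding this to the Lemma \ref{lem:err-H-h} estimate yields the asserted bound.

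The step I expect to be delicate is the treatment of the coderivative components $\delta_h I_h\mu - \delta\mu$ and $\delta_h^+ I_h^+\omega - \delta^+\omega$. Here $I_h$ does not commute with the continuous coderivative $\delta$, and $\delta_h$ is only a nonconforming discretization, so a naive bound through a classical interpolant would lose a power of $h$. The whole argument hinges on the identity $\delta_h I_h = Q_h^-\delta$, which is precisely the reason the projection-based operator $I_h$ was introduced: it replaces the problematic difference by an $L^2$-projection error that enjoys the optimal approximation order. One should also note that, in contrast to the discrete error equation \eqref{eq:err-semi} where the consistency residual tested against discrete functions collapses to $\bs G$, the full energy norm genuinely sees the $\delta\mu$ and $\delta^+\omega$ contributions, and this is exactly the origin of the additional $\|\mathcal A\bs u\|_{L^\infty(H^m)}$ term in the final estimate.
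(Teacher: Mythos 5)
Your proposal is correct and follows essentially the same route as the paper's proof: split through $\mathcal I_h\bs u$, control $\|\mathcal A_h\mathcal E_h\|$ by Lemma \ref{lem:err-H-h}, bound the exterior-derivative components by Lemma \ref{lem:app-I-h}, and use the commuting identity $\delta_h I_h = Q_h^-\delta$ from Lemma \ref{lem:bd-I-h}(2) to turn the coderivative components into $L^2$-projection errors. Your added remark tracing $\|\delta\mu\|_m + \|\delta^+\omega\|_m$ back to $\|\mathcal A\bs u\|_m + \|\dd^-\sigma\|_m$ merely makes explicit a step the paper leaves implicit.
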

\begin{proof}
The triangle inequality and Lemma \ref{lem:bd-I-h} imply that
\begin{align*}
\|\mathcal A_{h}\bs u_{h}(\cdot,t) - \mathcal A\bs u(\cdot,t)\| &  \lesssim \|\dd^{-}(\sigma - I_{h}^{-}\sigma)\| + \|\dd(\mu - I_{h}\mu)\|  + \|\delta\mu - \delta_{h}I_{h}\mu\| \\
&\quad + \|\delta^{+} \omega- \delta^{+}_{h} I_{h}^{+}\omega\| + \|\mathcal A_{h}\mathcal E_{h}(\cdot,t)\| \\
& = \|\dd^{-}(I - I_{h}^{-})\sigma\| + \|\dd(\mu - I_{h}\mu)\| + \|(I - Q_{h}^{-})\delta\mu\|\\
& \quad + \|(I - Q_{h})\delta^{+}\omega\| + \|\mathcal A_{h}\mathcal E_{h}(\cdot,t)\|.
\end{align*}
Using the properties of the $L^{2}$ projection operators, Lemma \ref{lem:app-I-h} and \ref{lem:err-H-h}, we get the desired results.
\end{proof}

\section{Full-discretization}
In this section, we will consider the full discretization. We will use a second order continuous time Galerkin method \cite{French1996A} to discretize time variable and will obtain the energy estimates and optimal error estimates.

Energy conservation numerical schemes can have a crucial influence on the quality of the numerical simulations. In long-time simulations, energy-preserving can have a dramatic effect on stability and global error growth. The numerical schemes are not automatically inherit from the semi-discretization and a lot of time discretization methods cannot preserve the energies exactly. These led us to pay more attentions on the time discretization.


\subsection{Time discretization}
Let $\mathcal T_{\dt}$ denote the equispaced partition of the interval $(0,T)$ with $\dt = T/N$ and $N$ the number of elements in $\mathcal T_{\dt}$. For $1\leq i \leq N$, we denote $t_{i} = i\dt$ and $\tau_{i} = (t_{i-1},t_{i})$ with $t_{0} = 0$. For any quantity $v(t)$, we denote $v^{i} = v(t_{i})$. Define $\mathcal P_{1}(\mathcal T_{\dt})$ (abbr. $\mathcal P_{1}$) as the set of continuous piecewise linear polynomials with respect to the time variable $t$ on $\mathcal T_{\dt}$ and $\mathcal P_{0}(\mathcal T_{\dt})$ (abbr. $\mathcal P_{0}$) as the set of piecewise constant with respect to the time variable $t$ on $\mathcal T_{\dt}$. For any Sobolev space $S$ associates with the spatial variables, we use
$\mathcal P_{1}(S)$ to denote the set of functions that are continuous piecewise linear polynomials with respect to the time variable $t$ and in the Sobolev space $S$ with respect to the spatial variables. $\mathcal P_{0}(S)$ is defined similarly. 

The full discrete formulation of the Hodge wave equation \eqref{eq:hodge-operator-form} can be written as: Find $\bs U_{h} = (\tilde\sigma_{h},\tilde\mu_{h},\tilde\omega_{h})^{\intercal}   \in \mathcal P_{1}(  \bs W_{h})$ such that
\begin{equation}
\label{eq:hodge-wave-full-dis}
\int_{0}^{T} \left(\la \bs U_{h,t},\bs V_{h}\ra + \la \mathcal A_{h} \bs U_{h},\bs V_{h}\ra\right) \dd t = \int_{0}^{T} \la \bs F,\bs V_{h}\ra \dd t\qquad\forall~~~\bs V_{h} \in  \mathcal P_{0}( \bs W_{h}).
\end{equation}
\begin{remark}
The full discrete formulation \eqref{eq:hodge-wave-full-dis} is equivalent to
$$
\int_{t_{i-1}}^{t_{i}}(\la\bs U_{h,t},\bs V_{h} \ra  + \la\mathcal A_{h} \bs U_{h},\bs V_{h}\ra )\dd t = \int_{t_{i-1}}^{t_{i}}\la \bs F,\bs V_{h}\ra \dd t\quad\forall~\bs V_{h}\in \mathcal P_{0}(\bs W_{h}),\  1\leq i \leq N.
$$
The fact that
\begin{align*}
\int_{t_{i-1}}^{t_{i}}\la\bs U_{h,t},\bs V_{h} \ra \dd t  & = \la\bs U_{h}^{i} - \bs U_{h}^{i-1},\bs V_{h}\ra, 
\end{align*}
and
$$
\int_{t_{i-1}}^{t_{i}}\la\mathcal A_{h} \bs U_{h},\bs V_{h}\ra \dd t  = \frac{\dt}{2}\la\mathcal A_{h}(\bs U_{h}^{i} + \bs U_{h}^{i-1}),\bs V_{h}\ra,
$$
implies the full discrete formulation \eqref{eq:hodge-wave-full-dis} is essentially a Crank-Nicolson scheme with exact time integration of the right hand side.
\end{remark}

We have the following energy estimates for \eqref{eq:hodge-wave-full-dis}.
\begin{theorem}\label{them:energy-l2-dis}
Let $\bs U_{h} = (\tilde\sigma_{h},\tilde \mu_{h},\tilde\omega_{h})^{\intercal} \in\mathcal P_{1} (\bs W_{h})$ be the solutions of \eqref{eq:hodge-wave-full-dis}. Assume that $f \in L^{\infty}((0,T), L^{2}\Lambda)$, then there hold the following energy bound
\begin{equation}
\label{eq:ener-dis-f}
\max\limits_{0\leq i \leq N} \|\bs U_{h}^{i}\| \leq \|\bs U_{h}^{0}\| + 2\int_{0}^{T}\|\bs F\|\dd t.
\end{equation}
When $F = 0$, the inequality becomes equality and we have the energy conservation 
$$\|\bs U_h^i\| = \|\bs U_h^0\|, \quad \forall \ 1\leq i \leq N.$$
\end{theorem}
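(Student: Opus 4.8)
The plan is to reproduce the continuous energy argument of Theorem~\ref{the:energy_continuous} at the discrete level, the essential structural fact being that $\mathcal A_h$ is skew-symmetric: since $\delta_h$ and $\delta_h^+$ are the $L^2$-adjoints of $\dd^-$ and $\dd$ by \eqref{eq:delta_h}, one checks directly that $\la \mathcal A_h \bs w,\bs w\ra = 0$ for every $\bs w \in \bs W_h$. First I would localize \eqref{eq:hodge-wave-full-dis} to a single subinterval $\tau_i$, as recorded in the preceding Remark, and test against the piecewise-constant function $\bs V_h$ whose value on $\tau_i$ is the midpoint $\tfrac{1}{2}(\bs U_h^i + \bs U_h^{i-1})$; this is admissible since it lies in $\mathcal P_0(\bs W_h)$.

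With this choice the two time-integral identities already stated in the Remark do all the work. The derivative term becomes $\int_{\tau_i}\la \bs U_{h,t},\bs V_h\ra\dd t = \la \bs U_h^i - \bs U_h^{i-1},\tfrac12(\bs U_h^i+\bs U_h^{i-1})\ra = \tfrac12(\|\bs U_h^i\|^2 - \|\bs U_h^{i-1}\|^2)$, while the $\mathcal A_h$ term becomes $\tfrac{\dt}{2}\la \mathcal A_h(\bs U_h^i+\bs U_h^{i-1}),\tfrac12(\bs U_h^i+\bs U_h^{i-1})\ra$, which vanishes by skew-symmetry. This yields the per-step energy identity $\|\bs U_h^i\|^2 - \|\bs U_h^{i-1}\|^2 = \int_{\tau_i}\la \bs F,\bs U_h^i+\bs U_h^{i-1}\ra\dd t$. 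When $\bs F = 0$ the right-hand side vanishes identically, giving $\|\bs U_h^i\| = \|\bs U_h^{i-1}\|$ for every $i$, which is exactly the asserted energy conservation.

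For the general bound I would telescope the per-step identity up to an arbitrary index $j$, obtaining $\|\bs U_h^j\|^2 = \|\bs U_h^0\|^2 + \sum_{i=1}^j \int_{\tau_i}\la \bs F,\bs U_h^i+\bs U_h^{i-1}\ra\dd t$. Writing $M = \max_{0\le i \le N}\|\bs U_h^i\|$ and bounding each integrand by Cauchy--Schwarz as $\|\bs F\|(\|\bs U_h^i\|+\|\bs U_h^{i-1}\|)\le 2M\|\bs F\|$, the whole sum is controlled by $2M\int_0^T\|\bs F\|\dd t$. Choosing $j$ so that $\|\bs U_h^j\| = M$ then gives the quadratic inequality $M^2 \le \|\bs U_h^0\|^2 + 2M\int_0^T\|\bs F\|\dd t$, and Lemma~\ref{lem:greatwall} applied with $x=M$, $\gamma = \|\bs U_h^0\|$, $\beta = 2\int_0^T\|\bs F\|\dd t$ converts this into the desired bound $M \le \|\bs U_h^0\| + 2\int_0^T\|\bs F\|\dd t$.

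The only genuinely delicate point, and the step I expect to require the most care, is the choice of test function: it must simultaneously be piecewise constant in time (to be admissible in $\mathcal P_0(\bs W_h)$) and equal to the midpoint value $\tfrac12(\bs U_h^i+\bs U_h^{i-1})$ on each $\tau_i$, since it is precisely this midpoint that both collapses the derivative term into a difference of squared norms and activates the skew-symmetry needed to annihilate the $\mathcal A_h$ contribution. Everything after that is the routine Cauchy--Schwarz and Lemma~\ref{lem:greatwall} machinery already employed in Theorem~\ref{the:energy_continuous}.
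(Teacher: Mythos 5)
Your proposal is correct and follows essentially the same route as the paper's proof: test with the (scaled) sum $\bs U_h^i+\bs U_h^{i-1}$ on each subinterval, use skew-symmetry of $\mathcal A_h$ to kill the spatial term, telescope, and finish with Cauchy--Schwarz and Lemma~\ref{lem:greatwall}. The only cosmetic difference is your factor of $\tfrac12$ in the test function, which changes nothing.
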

\begin{proof}
Taking $\bs V_{h}$ in \eqref{eq:hodge-wave-full-dis} as $$
\bs V_{h}|_{\tau_{i}} = \bs U_{h}^{i} + \bs U_{h}^{i-1}\qquad\text{and}\qquad \bs V_{h}|_{\mathcal T_{\dt}\setminus \tau_{i}} = 0,
$$
we obtain
$$
\int_{t_{i-1}}^{t_{i}}\la \bs U_{h,t},\bs U_{h}^{i} + \bs U_{h}^{i-1}\ra \dd t + \int_{t_{i-1}}^{t_{i}}\la \mathcal A_{h}\bs U_{h},\bs U_{h}^{i} + \bs U_{h}^{i-1}\ra\dd t = \int_{t_{i-1}}^{t_{i}} \la \bs F,\bs U_{h}^{i} + \bs U_{h}^{i-1}\ra \dd t.
$$
The fact that
\begin{align*}
\int_{t_{i-1}}^{t_{i}} \la \mathcal A_{h}\bs U_{h},\bs U_{h}^{i} + \bs U_{h}^{i-1}\ra\dd t & = \frac{\dt}{2} \la\mathcal A_{h}(\bs U_{h}^{i} + \bs U_{h}^{i-1}),\bs U_{h}^{i} + \bs U_{h}^{i-1}\ra = 0
\end{align*}
and
\begin{align*}
\int_{t_{i-1}}^{t_{i}} \la \bs U_{h,t},\bs U_{h}^{i} + \bs U_{h}^{i-1}\ra \dd t & = \|\bs U_{h}^{i}\|^{2} - \|\bs U_{h}^{i-1}\|^{2}
\end{align*}
imply 
\begin{align*}
\|\bs U_{h}^{i}\|^{2} - \|\bs U_{h}^{i-1}\|^{2} & = \int_{t_{i-1}}^{t_{i}}\la\bs F,\bs U_{h}^{i} + \bs U_{h}^{i-1}\ra \dd t \leq 2 \max\limits_{0\leq i\leq N}\|\bs U_{h}^{i}\| \int_{t_{i-1}}^{t_{i}} \|\bs F\|\dd t.
\end{align*}
Summing over $i$ from $1$ to $m \leq N$, we get
\begin{align*}
\|\bs U_{h}^{m}\|^{2} - \|\bs U_{h}^{0}\|^{2} \leq 2\max\limits_{0\leq i \leq N}\|\bs U_{h}^{i}\| \int_{0}^{T}\|\bs F\|\dd t.
\end{align*}
Therefore, the desired result follows by Lemma \ref{lem:greatwall}.
\end{proof}

\begin{remark}
Since \eqref{eq:hodge-wave-full-dis} is a linear system, therefore Theorem \ref{them:energy-l2-dis} implies the existence and uniqueness of the solution for the full discrete form \eqref{eq:hodge-wave-full-dis}.
\end{remark}

\begin{theorem}
\label{the:ener-dis-f-2}
Let $\bs U_{h} = (\tilde\sigma_{h}, \tilde\mu_{h} ,\tilde\omega_{h})^{\intercal} \in\mathcal P_{1}(\bs  W_{h})$ be the solution of \eqref{eq:hodge-wave-full-dis}. Assume that $f \in W^{1,1}((0,T), L^{2}\Lambda)$, then there holds the following energy bound
\begin{equation}
\label{eq:ener-dis-f-2}
\max\limits_{0\leq i \leq N} \|\mathcal A_{h}\bs U_{h}^{i}\| \leq \|\mathcal A_{h}\bs U_{h}^{0}\| + 4 \|\bs F\|_{L^{\infty}(L^{2})} + 2\int_{0}^{T}\|\bs F_{t}\|\dd t.
\end{equation}
When $F = 0$, the inequality becomes equality and we have the energy conservation 
$$\|\mathcal A_h\bs U_h^i\| = \|\mathcal A_h \bs U_h^0\|\quad \forall \ 1\leq i \leq N.$$
\end{theorem}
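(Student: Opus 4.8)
The plan is to reproduce, at the fully discrete level, the proof of the $H$-bound \eqref{eq:energy2} in Theorem \ref{the:energy_continuous}, combined with the interval-by-interval testing device already used in Theorem \ref{them:energy-l2-dis}. In the continuous setting the bound on $H(t)=\|\mathcal A\bs u(t)\|$ was obtained by testing with $\mathcal A\bs u_t$; the discrete counterpart is to test the equivalent per-interval form of \eqref{eq:hodge-wave-full-dis} with $\bs V_h=\mathcal A_h\bs U_{h,t}$ on a single $\tau_i$ and $\bs V_h=0$ elsewhere. This is admissible: since $\bs U_h\in\mathcal P_1(\bs W_h)$, the derivative $\bs U_{h,t}$ is piecewise constant in time, and because $\mathcal A_h$ maps $\bs W_h$ into itself, $\mathcal A_h\bs U_{h,t}\in\mathcal P_0(\bs W_h)$.

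First I would record the skew-symmetry $\la\mathcal A_h\bs w,\bs z\ra=-\la\bs w,\mathcal A_h\bs z\ra$ for all $\bs w,\bs z\in\bs W_h$, which follows from the definition \eqref{eq:delta_h} of $\delta_h$ and $\delta_h^+$ as the $L^2$-adjoints of $\dd^-$ and $\dd$; in particular $\la\mathcal A_h\bs w,\bs w\ra=0$, the identity already exploited in Theorem \ref{them:energy-l2-dis}. On $\tau_i$ the function $\bs U_{h,t}$ is a constant vector, so the term $\la\bs U_{h,t},\mathcal A_h\bs U_{h,t}\ra$ vanishes pointwise by skew-symmetry. Using $\mathcal A_h\bs U_{h,t}=(\mathcal A_h\bs U_h)_t$ (as $\mathcal A_h$ is linear and time-independent), the remaining term telescopes and the tested equation becomes the per-interval identity
\begin{equation*}
\frac{1}{2}\big(\|\mathcal A_h\bs U_h^i\|^2-\|\mathcal A_h\bs U_h^{i-1}\|^2\big)=\int_{t_{i-1}}^{t_i}\la\bs F,(\mathcal A_h\bs U_h)_t\ra\dd t.
\end{equation*}
Summing over $1\le i\le m$ and using that $\mathcal A_h\bs U_h$ is globally continuous across the nodes gives $\frac{1}{2}(\|\mathcal A_h\bs U_h^m\|^2-\|\mathcal A_h\bs U_h^0\|^2)=\int_0^{t_m}\la\bs F,(\mathcal A_h\bs U_h)_t\ra\dd t$.

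The decisive step, exactly as in the continuous proof, is to integrate by parts in time on the right-hand side. Since $\mathcal A_h\bs U_h$ is continuous and piecewise linear, hence Lipschitz in $t$, and $\bs F\in W^{1,1}((0,T),L^2\Lambda)$, both factors are absolutely continuous and
\begin{equation*}
\int_0^{t_m}\la\bs F,(\mathcal A_h\bs U_h)_t\ra\dd t=\la\bs F(t_m),\mathcal A_h\bs U_h^m\ra-\la\bs F(0),\mathcal A_h\bs U_h^0\ra-\int_0^{t_m}\la\bs F_t,\mathcal A_h\bs U_h\ra\dd t.
\end{equation*}
Writing $M=\max_{0\le i\le N}\|\mathcal A_h\bs U_h^i\|$ and noting that $\|\mathcal A_h\bs U_h(t)\|\le M$ on every $\tau_i$ by linearity and convexity of the norm, Cauchy--Schwarz on each term yields $\frac{1}{2}(\|\mathcal A_h\bs U_h^m\|^2-\|\mathcal A_h\bs U_h^0\|^2)\le M\big(2\|\bs F\|_{L^\infty(L^2)}+\int_0^T\|\bs F_t\|\dd t\big)$. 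Taking the maximum over $m$ gives $M^2\le\|\mathcal A_h\bs U_h^0\|^2+M\big(4\|\bs F\|_{L^\infty(L^2)}+2\int_0^T\|\bs F_t\|\dd t\big)$, and Lemma \ref{lem:greatwall} with $\gamma=\|\mathcal A_h\bs U_h^0\|$ and $\beta=4\|\bs F\|_{L^\infty(L^2)}+2\int_0^T\|\bs F_t\|\dd t$ delivers \eqref{eq:ener-dis-f-2}. When $\bs F=0$ the right-hand side of the per-interval identity vanishes, so $\|\mathcal A_h\bs U_h^i\|=\|\mathcal A_h\bs U_h^0\|$ for every $i$, which is the claimed exact conservation.

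The main obstacle is arranging the test function so that the energy $\|\mathcal A_h\bs U_h\|$ is produced rather than the $L^2$-energy: unlike Theorem \ref{them:energy-l2-dis}, where one tests with the nodal sum $\bs U_h^i+\bs U_h^{i-1}$, here one must test with $\mathcal A_h$ applied to the time derivative, and it is precisely the skew-symmetry $\la\bs U_{h,t},\mathcal A_h\bs U_{h,t}\ra=0$ that kills the unwanted term and leaves a clean telescoping sum for $\|\mathcal A_h\bs U_h\|^2$. The only other point requiring care is the temporal integration by parts, which is legitimate because the Galerkin-in-time ansatz keeps $\mathcal A_h\bs U_h$ continuous across the nodes $t_i$; this continuity is what allows the per-interval identities to be summed into a single boundary-plus-remainder expression and reproduces the $4\|\bs F\|_{L^\infty(L^2)}$ and $2\int_0^T\|\bs F_t\|\dd t$ contributions of the continuous estimate.
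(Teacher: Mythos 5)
Your proposal is correct and follows essentially the same route as the paper: the paper tests with $\bs V_h|_{\tau_i}=\mathcal A_h(\bs U_h^i-\bs U_h^{i-1})$, which is exactly your $\mathcal A_h\bs U_{h,t}$ scaled by $\dt$, and then uses the same skew-symmetry cancellation, telescoping, temporal integration by parts, and Lemma \ref{lem:greatwall}. No substantive differences.
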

\begin{proof}
Taking $\bs V_{h}$ in \eqref{eq:hodge-wave-full-dis} as
$$
\bs V_{h}|_{\tau_{i}} = \mathcal A_{h}(\bs U_{h}^{i} - \bs U_{h}^{i-1})\quad\text{and}\quad \bs V_{h}|_{\mathcal T_{\dt}\setminus \tau_{i}} = 0,
$$
we have
\begin{align*}
\int_{t_{i-1}}^{t_{i}} \la \bs U_{h,t},\mathcal A_{h}(\bs U_{h}^{i} - \bs U_{h}^{i-1})\ra \dd t & + \int_{t_{i-1}}^{t_{i}} \la \mathcal A_{h}\bs U_{h},\mathcal A_{h}(\bs U_{h}^{i} - \bs U_{h}^{i-1})\ra \dd t\\
& =   \int_{t_{i-1}}^{t_{i}} \la \bs F,\mathcal A_{h}(\bs U_{h}^{i} - \bs U_{h}^{i-1})\ra \dd t.
\end{align*}
Using the fact that
$$
\int_{t_{i-1}}^{t_{i}}\la \bs U_{h,t}, \mathcal A_{h}(\bs U_{h}^{i} - \bs U_{h}^{i-1})\ra \dd t = \la \bs U_{h}^{i} - \bs U_{h}^{i-1},\mathcal A_{h}(\bs U_{h}^{i} - \bs U_{h}^{i-1}) \ra = 0
$$
and
$$
\int_{t_{i-1}}^{t_{i}}\la\mathcal A_{h}\bs U_{h},\mathcal A_{h}(\bs U_{h}^{i} - \bs U_{h}^{i-1})\ra = \frac{\dt}{2}( \|\mathcal A_{h}\bs U_{h}^{i}\|^{2} - \|\mathcal A_{h}\bs U_{h}^{i-1}\|^{2}),
$$
we get
\begin{align*}
\|\mathcal A_{h}\bs U_{h}^{i}\|^{2} - \|\mathcal A_{h}\bs U_{h}^{i-1}\|^{2} & = 2 \int_{t_{i-1}}^{t_{i}}\left\la\bs F,\mathcal A_{h}\frac{\bs U_{h}^{i} - \bs U_{h}^{i-1}}{\dt} \right\ra \dd t 
 = 2\int_{t_{i-1}}^{t_{i}}\la\bs F,\mathcal A_{h}\bs U_{h,t}\ra \dd t.
\end{align*}
Summing over $i$ from $1$ to $m \leq N$, we obtain
\begin{align*}
\|\mathcal A_{h}\bs U_{h}^{m}\|^{2} &  = \|\mathcal A_{h} \bs U_{h}^{0}\|^{2} + 2 \int_{0}^{t_{m}}\la\bs F,\mathcal A_{h} \bs U_{h,t} \ra \dd t \\
& = \|\mathcal A_{h} \bs U_{h}^{0}\|^{2} + 2\la \bs F^{m},\mathcal A_{h}\bs U_{h}^{m}\ra - 2\la \bs F^{0},\mathcal A_{h}\bs U_{h}^{0}\ra - 2 \int_{0}^{t_{m}}\la \bs F_{t},\mathcal A_{h}\bs U_{h}\ra \dd t \\
& \leq \|\mathcal A_{h}\bs U_{h}^{0}\|^{2} + 2\max\limits_{0\leq i\leq N}\|\mathcal A_{h}\bs U_{h}^{i}\|\left(2 \|\bs F\|_{L^{\infty}(L^{2})} +  \int_{0}^{T}\|\bs F_{t}\|\dd t\right).
\end{align*}
Then the desired result follows by a direct using of Lemma \ref{lem:greatwall}.
\end{proof}

\subsection{Error analysis of the full discretization}

In this subsection, we turn to the error estimates of the full discrete formulation \eqref{eq:hodge-wave-full-dis}. We bound the error of the full discrete formulation in various norms.
Let 
\begin{align*}
\bs e_{h} = \mathcal I_{h}\bs u - \bs U_{h}.
\end{align*}
Simple caculation shows that for any $\bs V_{h} \in \mathcal P_{0} ( \bs W_{h})$, $\bs e_{h}$ satisfies the following equation
\begin{equation}
\label{eq:int-err-full-new}
\int_{0}^{T} (\la \bs e_{h,t},\bs V_{h}\ra  + \la\mathcal A_{h}\bs e_{h},\bs V_{h}\ra) \dd t = \int_{0}^{T}\la \Theta_{h} + \bs G,\bs V_{h} \ra \dd t,\quad\forall ~~\bs V_{h} \in \mathcal P_{0}(\bs W_{h}).
\end{equation}
Then, we have the following estimates of $\bs e_{h}$.
\begin{lemma}
\label{lem:e-l2-es}
Let $\bs u = (\sigma,\mu,\omega)^{\intercal}$ be the solution of \eqref{eq:hodge-operator-form} and $\bs U_{h} = (\tilde\sigma_{h},\tilde\mu_{h},\tilde\omega_{h})^{\intercal} \in \mathcal P_{1}(\bs W_{h})$ be the solution of \eqref{eq:hodge-wave-full-dis}. 
Assume that $\sigma_{tt} \in L^{1}((0,T),H^{r}\Lambda^{-})$, $\mu_{tt} \in L^{1}((0,T),H^{r}\Lambda)$ and $\omega_{tt} \in L^{1}((0,T),H^{r}\Lambda^{+})$ with $r \geq 1$. Then, for any $1\leq m \leq r$, we have the bound
\begin{align*}
\max\limits_{0\leq i\leq N} \|\bs e_{h}^{i}\|  \lesssim &h^{m} \int_{0}^{T}(\|\bs u_{h}\|_{m} + \|\dd^{-}\sigma\|_{m} + \|\dd\mu\|_{m}) \dd t + \dt^{2}\int_{0}^{T} \|\mathcal A\bs u_{tt}\| \dd t.
\end{align*}
\end{lemma}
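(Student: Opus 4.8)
The plan is to reduce the full-discrete error to a quantity that lives in $\mathcal P_{1}(\bs W_{h})$, so that the full-discrete energy estimate of Theorem \ref{them:energy-l2-dis} can be applied. Since $\mathcal I_{h}\bs u$ is in general not piecewise linear in time, let $\bs w_{h} \in \mathcal P_{1}(\bs W_{h})$ be the continuous piecewise linear (in time) nodal interpolant of $\mathcal I_{h}\bs u$, so that $\bs w_{h}^{i} = (\mathcal I_{h}\bs u)^{i}$ for every $i$. Writing $\bs\theta_{h} = \bs w_{h} - \bs U_{h} \in \mathcal P_{1}(\bs W_{h})$ and $\bs\rho_{h} = \mathcal I_{h}\bs u - \bs w_{h}$, we have $\bs e_{h} = \bs\theta_{h} + \bs\rho_{h}$ with $\bs\rho_{h}^{i} = 0$ at every node. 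Hence $\max_{i}\|\bs e_{h}^{i}\| = \max_{i}\|\bs\theta_{h}^{i}\|$, and it suffices to estimate the nodal values of the genuinely piecewise linear function $\bs\theta_{h}$.

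First I would derive the equation governing $\bs\theta_{h}$ by inserting $\bs e_{h} = \bs\theta_{h} + \bs\rho_{h}$ into \eqref{eq:int-err-full-new}. The decisive simplification is that $\bs\rho_{h}$ vanishes at all nodes, so for any $\bs V_{h}\in\mathcal P_{0}(\bs W_{h})$ one has $\int_{0}^{T}\la\bs\rho_{h,t},\bs V_{h}\ra\dd t = \sum_{i}\la\bs\rho_{h}^{i}-\bs\rho_{h}^{i-1},\bs V_{h}|_{\tau_{i}}\ra = 0$; only the zeroth-order term $\mathcal A_{h}\bs\rho_{h}$ survives. This yields $\int_{0}^{T}(\la\bs\theta_{h,t},\bs V_{h}\ra + \la\mathcal A_{h}\bs\theta_{h},\bs V_{h}\ra)\dd t = \int_{0}^{T}\la\Theta_{h,t}+\bs G-\mathcal A_{h}\bs\rho_{h},\bs V_{h}\ra\dd t$, i.e. $\bs\theta_{h}$ solves the full-discrete scheme with right-hand side $\Theta_{h,t}+\bs G-\mathcal A_{h}\bs\rho_{h}$ and, by the choice of discrete initial data, with $\bs\theta_{h}^{0}=0$. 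Applying Theorem \ref{them:energy-l2-dis} with this source gives $\max_{i}\|\bs\theta_{h}^{i}\| \leq 2\int_{0}^{T}\|\Theta_{h,t}+\bs G-\mathcal A_{h}\bs\rho_{h}\|\dd t$.

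It then remains to estimate the three source contributions. For the spatial part I would invoke Lemma \ref{lem:app-I-h}: since $\Theta_{h,t} = \mathcal I_{h}\bs u_{t}-\bs u_{t}$, one gets $\int_{0}^{T}\|\Theta_{h,t}\|\dd t \lesssim h^{m}\int_{0}^{T}\|\bs u_{t}\|_{m}\dd t$, and the definition of $\bs G$ together with the $\|\dd(\cdot)\|$-estimate of Lemma \ref{lem:app-I-h} gives $\int_{0}^{T}\|\bs G\|\dd t\lesssim h^{m}\int_{0}^{T}(\|\dd^{-}\sigma\|_{m}+\|\dd\mu\|_{m})\dd t$. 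The time-discretization error sits entirely in $\mathcal A_{h}\bs\rho_{h}$: because $\mathcal A_{h}$ is linear and time-independent, $\mathcal A_{h}\bs\rho_{h}$ is exactly the continuous piecewise linear interpolation error in time of $\mathcal A_{h}\mathcal I_{h}\bs u$, so the standard one-dimensional interpolation bound produces a factor $\dt^{2}$ after integration in time, namely $\int_{0}^{T}\|\mathcal A_{h}\bs\rho_{h}\|\dd t \lesssim \dt^{2}\int_{0}^{T}\|\mathcal A_{h}\mathcal I_{h}\bs u_{tt}\|\dd t$. Finally the stability estimate $\|\mathcal A_{h}\mathcal I_{h}\bs v\|\lesssim\|\mathcal A\bs v\|$ drawn from Lemma \ref{lem:bd-I-h} converts this into $\dt^{2}\int_{0}^{T}\|\mathcal A\bs u_{tt}\|\dd t$, and collecting the three bounds yields the claim. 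I expect the main obstacle to be precisely this last term: one must recognize that $\mathcal A_{h}\bs\rho_{h}$ is a pure time-interpolation error of $\mathcal A_{h}\mathcal I_{h}\bs u$ (not of $\bs u$), and that the stability of $\mathcal A_{h}\mathcal I_{h}$ is exactly what is needed to trade the mesh-dependent operator for the continuous $\mathcal A$ acting on $\bs u_{tt}$.
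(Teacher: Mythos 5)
Your proof is correct and follows essentially the same route as the paper: since $\bs U_{h}$ is already piecewise linear in time, your $\bs\rho_{h}=(I-J_{\Delta t})\mathcal I_{h}\bs u$ is exactly the term $\mathcal A_{h}(I-J_{\Delta t})\bs e_{h}$ that the paper isolates when testing with $\bs e_{h}^{i-1}+\bs e_{h}^{i}$, and both arguments rest on the same three ingredients --- skew-symmetry of $\mathcal A_{h}$ annihilating the piecewise-linear-in-time part, the one-dimensional interpolation bound producing $\Delta t^{2}\|\mathcal A_{h}\mathcal I_{h}\bs u_{tt}\|$, and the stability $\|\mathcal A_{h}\mathcal I_{h}(\cdot)\|\lesssim\|\mathcal A(\cdot)\|$ drawn from Lemma \ref{lem:bd-I-h}. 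Your packaging via the $\bs\theta_{h}$--$\bs\rho_{h}$ splitting and a black-box application of Theorem \ref{them:energy-l2-dis} is a clean equivalent of the paper's inline telescoping, and it also silently corrects two typos in the paper (the source in \eqref{eq:int-err-full-new} should read $\Theta_{h,t}$, consistent with \eqref{eq:hodge-inter-form}, and the $\|\bs u_{h}\|_{m}$ in the stated bound should read $\|\bs u_{t}\|_{m}$).
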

\begin{proof}
Taking $\bs V_{h}$ in \eqref{eq:int-err-full-new} as
$$
\bs V_{h}|_{\tau_{i}} = \bs e_{h}^{i-1} + \bs e_{h}^{i}\quad\text{and}\quad \bs V_{h}|_{\mathcal T_{\dt}\setminus \tau_{i}} = 0,
$$
we obtain
\begin{align*}
\int_{t_{i-1}}^{t_{i}}\la \bs e_{h,t},\bs e_{h}^{i-1} + \bs e_{h}^{i}\ra \dd t  & + \int_{t_{i-1}}^{t_{i}} \la \mathcal A_{h}\bs e_{h},\bs e_{h}^{i} + \bs e_{h}^{i+1} \ra \dd t = \int_{t_{i-1}}^{t_{i}} \la \Theta_{h} + \bs G,\bs e_{h}^{i-1} + \bs e_{h}^{i} \ra \dd t.
\end{align*}
Note that
\begin{align*}
\int_{t_{i-1}}^{t_{i}}\la \bs e_{h,t},\bs e_{h}^{i-1} + \bs e_{h}^{i} \ra \dd t & = \|\bs e_{h}^{i}\|^{2} - \|\bs e_{h}^{i-1}\|^{2},
\end{align*}
and
\begin{align*}
& \int_{t_{i-1}}^{t_{i}} \la \mathcal A_{h}\bs e_{h},\bs e_{h}^{i-1} + \bs e_{h}^{i} \ra \dd t  = \int_{t_{i-1}}^{ t_{i}} \la \mathcal A_{h}(I - J_{\dt})\bs e_{h},\bs e_{h}^{i-1} + \bs e_{h}^{i} \ra \dd t \\
 \leq &2 \max\limits_{0\leq i \leq N}\|\bs e_{h}^{i}\| \int_{t_{i-1}}^{t_{i}} \|\mathcal A_{h}(I - J_{\dt})\bs e_{h}\|\dd t 
\leq C\dt^{2} \max\limits_{0\leq i leq N}\|\bs e_{h}^{i}\| \int_{t_{i-1}}^{t_{i}}\|\mathcal A_{h}\mathcal I_{h} \bs u_{tt}\| \dd t \\
 \leq & C \dt^{2} \max\limits_{0\leq i leq N}\|\bs e_{h}^{i}\| \int_{t_{i-1}}^{t_{i}}\|\mathcal A \bs u_{tt}\| \dd t ,
\end{align*}
we then have
\begin{align*}
&\|\bs e_{h}^{i}\|^{2}  - \|\bs e_{h}^{i-1}\|^{2}  \leq C \max\limits_{0\leq i \leq N}\|\bs e_{h}^{i}\| \left(\int_{t_{i-1}}^{t_{i}}(\|\Theta_{h}\| + \|\bs G\|) \dd t + \dt^{2}\int_{t_{i-1}}^{t_{i}} \|\mathcal A\bs u_{tt}\| \dd t \right) \\
\lesssim & \max\limits_{0\leq i \leq N}\|\bs e_{h}^{i}\| \left(h^{m} \int_{t_{i-1}}^{t_{i}}(\|\bs u_{h}\|_{m} + \|\dd^{-}\sigma\|_{m} + \|\dd\mu\|_{m}) \dd t + \dt^{2}\int_{t_{i-1}}^{t_{i}} \|\mathcal A\bs u_{tt}\| \dd t \right). 
\end{align*}
Summing over $i$ from $1$ to $m \leq N$, we get
\begin{align*}
\|\bs e_{h}^{m}\|^{2} & \lesssim  \max\limits_{0\leq i \leq N}\|\bs e_{h}^{i}\| \left(h^{m} \int_{0}^{T}(\|\bs u\|_{m} + \|\dd^{-}\sigma\|_{m} + \|\dd\mu\|_{m}) \dd t + \dt^{2}\int_{0}^{T} \|\mathcal A\bs u_{tt}\| \dd t \right).
\end{align*}
Then the desired result follows. 
\end{proof}

Using triangle inequality, Lemma \ref{lem:e-l2-es} and the properties of $I_{h}$, we have the following $L^{2}$ error estimate.
\begin{theorem}\label{the:err-full-L2}
Let $\bs u = (\sigma,\mu,\omega)^{\intercal}$ be the solution of \eqref{eq:hodge-operator-form} and $\bs U_{h} = (\tilde\sigma_{h},\tilde\mu_{h},\tilde\omega_{h})^{\intercal} \in \mathcal P_{1}(\bs W_{h})$ be the solution of \eqref{eq:hodge-wave-full-dis}.
Assume that $\sigma_{tt} \in L^{1}((0,T),H^{r}\Lambda^{-})$, $\mu_{tt} \in L^{1}((0,T),H^{r}\Lambda)$ and $\omega_{tt} \in L^{1}((0,T),H^{r}\Lambda^{+})$ with $r \geq 1$. Then, for any $1\leq m \leq r$ we have the bound
\begin{align*}
\max\limits_{0\leq i\leq N} \|\bs u^{i} - \bs U_{h}^{i}\|  \lesssim &
h^{m} \left(\|\bs u\|_{L^{\infty}(H^{m})} + \|\bs u\|_{L^{1}(H^{m})} + \|\dd^{-}\sigma\|_{L^{1}(H^{m})} + \|\dd\mu\|_{L^{1}(H^{m})} \right) \\
& + \dt^{2}\|\mathcal A\bs u_{tt}\|_{L^{1}(L^{2})}
%
\end{align*}
\end{theorem}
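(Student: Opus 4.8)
The plan is to follow the by-now familiar splitting of the error into an interpolation part and the auxiliary error $\bs e_h = \mathcal I_h\bs u - \bs U_h$, the latter having already been controlled in Lemma \ref{lem:e-l2-es}. First I would insert $\mathcal I_h \bs u^i$ at each time level $t_i$ and use the triangle inequality:
\[
\|\bs u^i - \bs U_h^i\| = \|(\bs u^i - \mathcal I_h\bs u^i) + \bs e_h^i\| \le \|\bs u^i - \mathcal I_h\bs u^i\| + \|\bs e_h^i\|.
\]
It then suffices to bound the two terms on the right uniformly in $i$ and take the maximum over $0\le i\le N$.

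For the interpolation term I would use that $\mathcal I_h$ is block-diagonal, acting by $I_h^-$, $I_h$, $I_h^+$ on the three components $\sigma,\mu,\omega$ of $\bs u$. Since the $L^2$ norm decouples across these components, the approximation bound \eqref{eq:app-1} of Lemma \ref{lem:app-I-h} applies to each component separately, yielding, for $1\le m\le r$,
\[
\|\bs u^i - \mathcal I_h\bs u^i\| \lesssim h^m\|\bs u^i\|_m \le h^m\|\bs u\|_{L^\infty(H^m)},
\]
uniformly in $i$. This produces exactly the $h^m\|\bs u\|_{L^\infty(H^m)}$ contribution in the stated estimate.

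For the auxiliary term I would simply invoke Lemma \ref{lem:e-l2-es}, rewriting its time integrals $\int_0^T\|\cdot\|_m\dd t$ as the corresponding $L^1(H^m)$ norms, to obtain
\[
\max\limits_{0\le i\le N}\|\bs e_h^i\| \lesssim h^m\bigl(\|\bs u\|_{L^1(H^m)} + \|\dd^-\sigma\|_{L^1(H^m)} + \|\dd\mu\|_{L^1(H^m)}\bigr) + \dt^2\|\mathcal A\bs u_{tt}\|_{L^1(L^2)}.
\]
Adding the two bounds and taking the supremum over $i$ on the left delivers the claim.

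The heavy lifting (the Crank--Nicolson energy identity, the $O(\dt^2)$ consistency coming from $(I-J_{\dt})\bs e_h$, and the spatial interpolation estimates for $\Theta_h$ and $\bs G$) has already been carried out inside Lemma \ref{lem:e-l2-es} via Theorem \ref{them:energy-l2-dis}, so the remaining argument is essentially bookkeeping. The only point requiring care is that the interpolation estimate must be applied componentwise through the three distinct operators $I_h^-$, $I_h$, $I_h^+$ on spaces of consecutive degree, and that the regularity hypotheses of Lemma \ref{lem:app-I-h} are genuinely met for each component so that \eqref{eq:app-1} is available with the asserted exponent $m\le r$; the assumed regularity of $\bs u$ (together with the $L^1$-in-time hypotheses on the second time derivatives) supplies this.
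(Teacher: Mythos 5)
Your proposal is correct and follows essentially the same route as the paper, which proves the theorem by exactly this combination: the triangle inequality through $\mathcal I_h\bs u^i$, the approximation property \eqref{eq:app-1} of Lemma \ref{lem:app-I-h} applied componentwise for the term $h^m\|\bs u\|_{L^\infty(H^m)}$, and Lemma \ref{lem:e-l2-es} for $\max_i\|\bs e_h^i\|$. Nothing further is needed.
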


Now, we turn to the estimates of the energy norm $\|\mathcal A(\cdot)\|$ error. We first estimate $\|\mathcal A_{h}\bs e_{h}\|$.
\begin{lemma}
Let $\bs u = (\sigma,\mu,\omega)^{\intercal}$ be the solution of \eqref{eq:hodge-operator-form} and $\bs U_{h} = (\tilde\sigma_{h},\tilde\mu_{h},\tilde\omega_{h})^{\intercal} \in \mathcal P_{1}(\bs W_{h})$ be the solution of \eqref{eq:hodge-wave-full-dis}.
Assume that $\sigma_{tt} \in L^{\infty}((0,T),H^{r}\Lambda^{-})$, $\mu_{tt} \in L^{\infty}((0,T),H^{r}\Lambda)$ and $\omega_{tt} \in L^{\infty}((0,T),H^{r}\Lambda^{+})$ with $r \geq 1$. Then, for any $1\leq m \leq r$ we have the bound
\begin{align*}
\|\mathcal A_{h}\bs e_{h}\| & \lesssim h^{m}\left( \|\bs u_{t}\|_{L^{1}(H^{m})} + \|\dd^{-} \sigma_{t}\|_{L^{1}(H^{m})} + \|\dd\mu_{t}\|_{L^{1}(H^{m})} \right) + \dt^{2}\|\mathcal A \bs u_{tt}\|_{L^{\infty}(L^{2})}.
\end{align*}
\end{lemma}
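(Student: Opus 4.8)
The plan is to run the energy-estimate argument of Theorem \ref{the:ener-dis-f-2}, now applied to the error identity \eqref{eq:int-err-full-new}, while peeling off the time-discretization error exactly as in Lemma \ref{lem:e-l2-es}. On each subinterval $\tau_i$ I would test \eqref{eq:int-err-full-new} with the piecewise constant $\bs V_h|_{\tau_i} = \mathcal A_h(\bs e_h^i - \bs e_h^{i-1})$ and $\bs V_h = 0$ elsewhere, the exact analogue of the test function used for $\bs U_h$ in Theorem \ref{the:ener-dis-f-2}. Since $\mathcal A_h$ is skew-symmetric, the term $\int_{\tau_i}\la \bs e_{h,t},\bs V_h\ra\dd t = \la \bs e_h^i-\bs e_h^{i-1},\mathcal A_h(\bs e_h^i-\bs e_h^{i-1})\ra$ vanishes, as before.

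The second term needs the time interpolant $J_{\dt}$ from the proof of Lemma \ref{lem:e-l2-es}, because $\bs e_h = \mathcal I_h\bs u - \bs U_h$ is \emph{not} piecewise linear in time (only $\bs U_h$ is). Splitting $\mathcal A_h\bs e_h = \mathcal A_h J_{\dt}\bs e_h + \mathcal A_h(I-J_{\dt})\bs e_h$ and using $(I-J_{\dt})\bs U_h = 0$, I obtain
\[
\int_{\tau_i}\la \mathcal A_h\bs e_h,\bs V_h\ra\dd t = \frac{\dt}{2}\big(\|\mathcal A_h\bs e_h^i\|^2 - \|\mathcal A_h\bs e_h^{i-1}\|^2\big) + \la \bs w_i,\mathcal A_h(\bs e_h^i-\bs e_h^{i-1})\ra,
\]
where $\bs w_i = \int_{\tau_i}\mathcal A_h(I-J_{\dt})\mathcal I_h\bs u\dd t$ is precisely the Crank--Nicolson (trapezoidal) consistency defect. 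Summing over $i$, using $\bs e_h^0 = 0$, and dividing by $\dt$ then gives $\|\mathcal A_h\bs e_h^m\|^2 = \tfrac{2}{\dt}\sum_i\la \bs W_i - \bs w_i,\mathcal A_h(\bs e_h^i-\bs e_h^{i-1})\ra$ with $\bs W_i = \int_{\tau_i}(\Theta_h+\bs G)\dd t$.

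The final step is to remove the factor $1/\dt$ by summation by parts in time, the discrete counterpart of the integration by parts already used in Theorem \ref{the:ener-dis-f-2}. Writing $\bs g_i = \mathcal A_h\bs e_h^i$, I would rewrite $\tfrac{2}{\dt}\sum_i\la \bs W_i-\bs w_i,\bs g_i-\bs g_{i-1}\ra$ as a boundary term at $t_m$ plus a sum of the increments $\bs W_{i+1}-\bs W_i$ and $\bs w_{i+1}-\bs w_i$ tested against $\bs g_i$. The data increments satisfy $\sum_i\|\bs W_{i+1}-\bs W_i\| \lesssim \dt\,\|(\Theta_h+\bs G)_t\|_{L^1(L^2)}$, so after dividing by $\dt$ and invoking Lemma \ref{lem:app-I-h} for $\Theta_{h,t}$ and $\bs G_t$ this produces the advertised $h^m\big(\|\bs u_t\|_{L^1(H^m)} + \|\dd^-\sigma_t\|_{L^1(H^m)} + \|\dd\mu_t\|_{L^1(H^m)}\big)$ term, while the boundary contribution of $\bs w_m$ gives the $\dt^2\|\mathcal A\bs u_{tt}\|_{L^\infty(L^2)}$ term through the trapezoidal bound $\|\bs w_i\| \lesssim \dt^2\int_{\tau_i}\|\mathcal A_h\mathcal I_h\bs u_{tt}\|\dd t$ together with Lemma \ref{lem:bd-I-h}. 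Dividing by $\max_i\|\mathcal A_h\bs e_h^i\|$ and applying the quadratic inequality of Lemma \ref{lem:greatwall} then closes the estimate.

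The hard part is the consistency defect $\bs w_i$. Because the $\mathcal A_h$-energy telescopes with a prefactor $\dt$ (unlike the clean $L^2$ telescoping in Lemma \ref{lem:e-l2-es}), $\bs w_i$ enters weighted by $1/\dt$, and a term-by-term estimate only yields $O(\dt)$; the sharp $O(\dt^2)$ rests on combining the $O(\dt^3)$ \emph{local} order of the trapezoidal defect with the cancellation in the summation by parts, so that effectively only the boundary increment survives, at the cost of one power of $\dt$. Maintaining this cancellation under the sole assumption $\mathcal A\bs u_{tt}\in L^\infty(L^2)$ (rather than a stronger $\bs u_{ttt}$ regularity, which a naive bound of $\sum_i\|\bs w_{i+1}-\bs w_i\|$ would demand) is the delicate point, and it is precisely what forces the $L^\infty$-in-time norm in the $\dt^2$ term.
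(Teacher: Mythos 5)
Your proposal follows essentially the same route as the paper's proof: the same test function $\bs V_{h}|_{\tau_{i}} = \mathcal A_{h}(\bs e_{h}^{i} - \bs e_{h}^{i-1})$ in \eqref{eq:int-err-full-new}, the same skew-symmetry cancellation of the $\la\bs e_{h,t},\bs V_{h}\ra$ term, the same splitting of $\mathcal A_{h}\bs e_{h}$ via the nodal-in-time interpolant $J_{\dt}$, and the same closing appeal to Lemma \ref{lem:app-I-h} and Lemma \ref{lem:greatwall}. The one substantive divergence is how the factor $1/\dt$ is removed. The paper writes $(\bs e_{h}^{i}-\bs e_{h}^{i-1})/\dt = \tfrac{\partial}{\partial t}(J_{\dt}\bs e_{h})$ and integrates by parts \emph{continuously}, so the time derivative lands on $\Theta_{h}+\bs G$ and on the defect $(I-J_{\dt})\bs e_{h}$; since the defect vanishes at the nodes it contributes no boundary terms, and the remaining factor $\mathcal A_{h}J_{\dt}\bs e_{h}$ is bounded by $\max_{i}\|\mathcal A_{h}\bs e_{h}^{i}\|$. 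Your discrete Abel summation is the identical computation for the data term $\bs W_{i}$, but for the defect it produces the increments $\bs w_{i+1}-\bs w_{i}$, which, as you correctly observe, cannot be summed to the required order without a $\bs u_{ttt}$ bound; the paper's integration by parts sidesteps those increments entirely and instead must control $\int_{0}^{T}\|\mathcal A_{h}\tfrac{\partial}{\partial t}(I-J_{\dt})\bs e_{h}\|\dd t$, which it asserts to be $\lesssim \dt^{2}\|\mathcal A\bs u_{tt}\|_{L^{\infty}(L^{2})}$ by one-dimensional interpolation estimates. Be aware, though, that the standard interpolation estimate gives $\int_{\tau_{i}}\|\tfrac{\partial}{\partial t}(v-J_{\dt}v)\|\dd t \lesssim \dt\int_{\tau_{i}}\|v_{tt}\|\dd t$, which accumulates to $O(\dt)$ over the $N=T/\dt$ subintervals rather than $O(\dt^{2})$; so the difficulty you articulate at length in your final paragraph is a genuine soft spot that the paper's own last step does not visibly resolve either. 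In short: modulo the discrete-versus-continuous summation by parts, your proof is the paper's proof, and the delicate point you flag is present in both versions.
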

\begin{proof}
Taking $\bs V_{h}$ in \eqref{eq:int-err-full-new} as
$$
\bs V_{h}|_{\tau_{i}} = \mathcal A_{h} (\bs e_{h}^{i} - \bs e_{h}^{i-1})\quad\text{and}\quad \bs V_{h}|_{\mathcal T_{\dt}\setminus \tau_{i}} = 0,
$$
we obtain
\begin{align*}
\int_{t_{i-1}}^{t_{i}} \la \bs e_{h,t},\mathcal A_{h}(\bs e_{h}^{i} - \bs e_{h}^{i-1})\ra \dd t & + \int_{t_{i-1}}^{t_{i}} \la \mathcal A_{h}\bs  e_{h},\mathcal A_{h}(\bs e_{h}^{i} - \bs e_{h}^{i-1}) \ra \dd t \\
&  = \int_{t_{i-1}}^{t_{i}}\la\Theta_{h}+\bs G,\mathcal A_{h}(\bs e_{h}^{i} -\bs e_{h}^{i-1}) \ra \dd t.
\end{align*}
Note that
\begin{align*}
\int_{t_{i-1}}^{t_{i}} \la\bs e_{h,t},\mathcal A_{h}(\bs e_{h}^{i} - \bs e_{h}^{i-1}) \ra \dd t & = \la(\bs e_{h}^{i} - \bs e_{h}^{i-1}),\mathcal A_{h}(\bs e_{h}^{i} - \bs e_{h}^{i-1})\ra = 0,
\end{align*}
and
\begin{align*}
\int_{t_{i-1}}^{t_{i}}\la\mathcal A_{h}\bs e_{h},\mathcal A_{h}(\bs e_{h}^{i} - \bs e_{h}^{i-1}) \ra \dd t & =\frac{\dt}{2} \left( \|\mathcal A_{h}\bs e_{h}^{i}\|^{2} - \|\mathcal A_{h}\bs e_{h}^{i-1}\|^{2} \right) \\
&\quad  + 
\int_{t_{i-1}}^{t_{i}}\la\mathcal A_{h} (I - J_{\dt})\bs e_{h},\mathcal A_{h}(\bs e_{h}^{i} - \bs e_{h}^{i-1})\ra \dd t.
\end{align*}
Therefore,
\begin{align*}
& \|\mathcal A_{h}\bs e_{h}^{i}\|^{2} - \|\mathcal A_{h}\bs e_{h}^{i-1}\|^{2} \\  = & \frac{2}{\dt} \int_{t_{i-1}}^{t_{i}} \la\Theta_{h}+\bs G,\mathcal A_{h}(\bs e_{h}^{i} - \bs e_{h}^{i-1})\ra \dd t  - \frac{2}{\dt} \int_{t_{i-1}}^{t_{i}} \la\mathcal A_{h} (I - J_{\dt})\bs e_{h},\mathcal A_{h}(\bs e_{h}^{i} - \bs e_{h}^{i-1})\ra \dd t\\
=& 2 \int_{t_{i-1}}^{t_{i}}\left\la \Theta_{h} + \bs G,\mathcal A_{h}\frac{\partial}{\partial t}(J_{\dt}\bs e_{h})\right\ra\dd t -2 \int_{t_{i-1}}^{t_{i}}\left\la \mathcal A_{h}(I - J_{\dt})\bs e_{h},\mathcal A_{h}\frac{\partial}{\partial t}(J_{\dt}\bs e_{h})\right\ra\dd t.
\end{align*}
Summing over $i$ from $1$ to $ m \leq N$, we get
\begin{align*}
& \|\mathcal A_{h}\bs e_{h}^{m}\|^{2} \\
  = &2\int_{0}^{t_{m}} \left\la \Theta_{h} + \bs G,\mathcal A_{h}\frac{\partial}{\partial t}(J_{\dt}\bs e_{h})\right\ra\dd t 
- 2 \int_{0}^{t_{m}} \left\la \mathcal A_{h}(I - J_{\dt})\bs e_{h},\mathcal A_{h}\frac{\partial}{\partial t}(J_{\dt}\bs e_{h})\right\ra\dd t \\
 = & 2\la \Theta_{h}^{m} + \bs G^{m},\mathcal A_{h} \bs e_{h}^{m}\ra - 2\int_{0}^{t_{m}} \la\Theta_{h,t} + \bs G_{t},\mathcal A_{h}J_{\dt}\bs e_{h} \ra\dd t \\
&  + 2 \int_{0}^{t_{m}} \left\la \mathcal A_{h}\frac{\partial}{\partial t}(I - J_{\dt})\bs e_{h},\mathcal A_{h}(J_{\dt}\bs e_{h})\right\ra\dd t \\
 \leq & 2\max\limits_{0 \leq i \leq N}\|\mathcal A_{h}\bs e_{h}^{i}\|\left( \max\limits_{0\leq i \leq N}\|\Theta_{h}^{i} + \bs G^{i}\| + \int_{0}^{T}\|\Theta_{h,t} + \bs G_{t}\|\dd t  + \int_{0}^{T}\left\|\mathcal A_{h} \frac{\partial}{\partial t}(I - J_{\dt})\bs e_{h}\right\| \dd t   \right) \\
 \leq & C \max\limits_{0\leq i \leq N}\|\mathcal A_{h}\bs e_{h}^{i}\|  \int_{0}^{T}\left(\|\Theta_{h,t} \| + \|\bs G_{t}\| +\left\|\mathcal A_{h} \frac{\partial}{\partial t}(I - J_{\dt})\bs e_{h}\right\| \right)\dd t \\
 \leq & C\max\limits_{0\leq i \leq N}\|\mathcal A_{h}\bs e_{h}^{i}\| \left[ h^{m} \left( \|\bs u_{t}\|_{L^{1}(H^{m})} + \|\dd^{-} \sigma_{t}\|_{L^{1}(H^{m})} + \|\dd\mu_{t}\|_{L^{1}(H^{m})} + \dt^2 \| \|\mathcal A \bs u_{tt}\|_{L^{\infty}(L^{2})}\right) \right], 
\end{align*}
where in the last inequality, we have used the properties of the one dimensional interpolation operator. Then the desired result follows.
\end{proof}

We summarize the error estimate for the full discretization below.
 \begin{theorem}\label{the:err-full-A}
 Let $\bs u = (\sigma,\mu,\omega)^{\intercal}$ be the solution of \eqref{eq:hodge-operator-form} and $\bs U_{h} = (\tilde\sigma_{h},\tilde\mu_{h},\tilde\omega_{h})^{\intercal} \in \mathcal P_{1}(\bs W_{h})$ be the solution of \eqref{eq:hodge-wave-full-dis}.
Assume that $\sigma_{ttt} \in L^{\infty}((0,T),H^{r}\Lambda^{-})$, $\mu_{ttt} \in L^{\infty}((0,T),H^{r}\Lambda)$ and $\omega_{ttt} \in L^{\infty}((0,T),H^{r}\Lambda^{+})$ with $r \geq 1$. Then, for any $1\leq m \leq r$ we have the bound
\begin{align*}
\max\limits_{0\leq i\leq N} \|\mathcal A\bs u^{i} - \mathcal A_{h}\bs U_{h}^{i}\|  \lesssim & \, h^{m}\left [ \|\bs u_{t}\|_{L^{1}(H^{m})}  + \|\mathcal A \bs u\|_{L^{\infty}(H^{m})} \right.\\
& + \left. \|\dd^{-} \sigma_{t}\|_{L^{1}(H^{m})} + \|\dd\mu_{t}\|_{L^{1}(H^{m})} \right ] \\
 + & \dt^{2} \|\mathcal A \bs u_{tt}\|_{L^{\infty}(L^{2})}.
\end{align*}
\end{theorem}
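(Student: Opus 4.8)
The plan is to reduce the theorem to the preceding lemma together with the semi-discrete argument of Theorem~\ref{the:err-semi-A}, by splitting the error through the interpolant $\mathcal I_h\bs u$. With $\bs e_h = \mathcal I_h\bs u - \bs U_h$ as above, the triangle inequality at each node $t_i$ gives
\begin{equation*}
\|\mathcal A\bs u^i - \mathcal A_h\bs U_h^i\| \le \|\mathcal A\bs u^i - \mathcal A_h\mathcal I_h\bs u^i\| + \|\mathcal A_h\bs e_h^i\|.
\end{equation*}
The second term is exactly what the preceding lemma controls, contributing $h^m(\|\bs u_t\|_{L^1(H^m)} + \|\dd^{-}\sigma_t\|_{L^1(H^m)} + \|\dd\mu_t\|_{L^1(H^m)}) + \dt^2\|\mathcal A\bs u_{tt}\|_{L^\infty(L^2)}$; this is where the energy-conservation identities of Theorems~\ref{them:energy-l2-dis}--\ref{the:ener-dis-f-2} have already been spent to keep the constant free of any power of $T$, since the $\max_i$ there arises from a telescoping sum closed by Lemma~\ref{lem:greatwall} rather than by a Gr\"onwall step. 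Note the theorem's hypotheses on $\sigma_{ttt},\mu_{ttt},\omega_{ttt}$ are (more than) enough to apply that lemma. It remains only to bound the time-pointwise, purely spatial consistency term $\|\mathcal A\bs u^i - \mathcal A_h\mathcal I_h\bs u^i\|$ by $h^m\|\mathcal A\bs u\|_{L^\infty(H^m)}$.

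For that term I would expand the block structure of $\mathcal A$ and $\mathcal A_h$ componentwise exactly as in Theorem~\ref{the:err-semi-A}, obtaining the four pieces $\|\delta\mu - \delta_h I_h\mu\|$, $\|\dd^{-}(\sigma - I_h^{-}\sigma)\|$, $\|\delta^{+}\omega - \delta_h^{+}I_h^{+}\omega\|$ and $\|\dd(\mu - I_h\mu)\|$. The decisive step is to invoke the commutation identities from Lemma~\ref{lem:bd-I-h}, namely $\delta_h I_h\mu = Q_h^{-}\delta\mu$ and $\delta_h^{+}I_h^{+}\omega = Q_h\delta^{+}\omega$, which turn the two coderivative pieces into the $L^2$-projection errors $\|(I - Q_h^{-})\delta\mu\|$ and $\|(I - Q_h)\delta^{+}\omega\|$. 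This is precisely where optimality is secured: the commutation keeps these at order $h^m$, whereas the canonical interpolant does not commute with $\delta_h$ and would cost a power of $h$.

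I would then close the estimate by applying the approximation bound \eqref{eq:app-2} of Lemma~\ref{lem:app-I-h} to $\|\dd^{-}(\sigma - I_h^{-}\sigma)\|$ and $\|\dd(\mu - I_h\mu)\|$, and the standard optimal-order estimate for the $L^2$ projections $Q_h^{-}, Q_h$ to the two remaining pieces. Using that $\mathcal A\bs u = (\delta\mu,\,-\dd^{-}\sigma - \delta^{+}\omega,\,\dd\mu)^{\intercal}$, together with the $L^2$-orthogonality of the Hodge decomposition separating $\dd^{-}\sigma$ from $\delta^{+}\omega$ in the middle component, each of the four pieces is bounded by $h^m$ times an $H^m$-norm of a component of $\mathcal A\bs u$, hence by $h^m\|\mathcal A\bs u\|_{L^\infty(H^m)}$. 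Adding the two contributions and taking the maximum over $i$ yields the claimed bound. The main obstacle is not in this final assembly, which merely repackages Theorem~\ref{the:err-semi-A} and the preceding lemma, but upstream of it: the commutation $\delta_h I_h = Q_h^{-}\delta$ of Lemma~\ref{lem:bd-I-h}, without which the coderivative errors are suboptimal, and the verification inside the preceding lemma that the temporal remainder $\mathcal A_h\frac{\partial}{\partial t}(I - J_{\dt})\bs e_h$ is genuinely of order $\dt^2$ under the stated regularity of the time derivatives of $\bs u$.
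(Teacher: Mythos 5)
Your proposal is correct and follows essentially the same route as the paper: the paper states this theorem without a written proof, but it is obtained exactly as you describe, by splitting through $\mathcal I_h\bs u$, invoking the preceding lemma for $\|\mathcal A_h\bs e_h^i\|$, and bounding the spatial consistency term componentwise as in the proof of Theorem~\ref{the:err-semi-A} via the commutation $\delta_hI_h = Q_h^{-}\delta$ and Lemma~\ref{lem:app-I-h}. Your identification of the commutation property and the $\dt^2$ temporal remainder as the genuinely load-bearing ingredients matches the paper's emphasis.
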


\section{Numerical experiments}
In this section, we will give some simple numerical examples to illustrate the theoretical results. We consider the Hodge wave equation on the unit square $\Omega = (0,1)^{2}$, i.e., $n = 2$ and compute the cases $k = 0$, $k  = 1$ and $k = 2$.

\subsection{The case $k = 0$} 
The Hodge wave equation presents in the standard $H^{1}(\Omega)$ or $H(\curl)$ language reads as (note that $\sigma = \delta\mu = 0$): Find $\mu \in H_{0}^{1}$ and $\bs \omega \in H_{0}(\rot)$ or $\mu \in H_{0}(\curl)$ and $\bs\omega \in H_{0}(\div)$ such that
\begin{align}
(\mu_{t}, v) + (\bs\omega,\grad v) & = ( f, v)\qquad\forall~~\bs v \in H_{0}^{1}(\Omega) ,\\
(\bs \omega_{t},\bs \phi) - (\grad\mu,\bs \phi) & = 0\qquad\forall~~\bs \phi \in H_{0}(\rot).
\end{align}
or
\begin{align}\label{eq:k=0-1}
(\mu_{t}, v) + (\bs\omega,\curl v) & = ( f, v)\qquad\forall~~\bs v \in H_{0}(\curl) ,\\
\label{eq:k=0-2}
(\bs \omega_{t},\bs \phi) - (\curl\mu,\bs \phi) & = 0\qquad\forall~~\bs \phi \in H_{0}(\div).
\end{align}
We only give the numerical results for \eqref{eq:k=0-1}-\eqref{eq:k=0-2}. We choose the exact solutions as
\begin{align*}
\mu(x,y,t) & = e^{-t} \sin(\pi x)\sin(\pi y), \\
\bs \omega(x,y,t) & = -\pi e^{-t}\begin{pmatrix} 
 \sin(\pi x)\cos(\pi y) \\ -\cos(\pi x)\sin(\pi y)
\end{pmatrix}.
\end{align*}
The initial conditions are $\mu_{0} = \mu(x,y,0)$ and $\bs\omega_{0} = \bs\omega(x,y,0)$. We use piecewise continuous second order polynomial to discrete $\mu$ and use $RT_1$ element \cite{Raviart1977A} to discrete $\omega$, the numerical results are listed in Table \ref{EX-k-0-1}.

\begin{table}[hpt]
\footnotesize
\begin{center}
\caption{Errors and convergence orders in various norms with $\dt = 0.0001$ at $t = 0.0004$.}
\label{EX-k-0-1}
\begin{tabular}{c|c|c|c}
\hline
\hline  $h$ & $\|\mu - \mu_{h}\|$  &$ \|\curl(\mu - \mu_{h})\| $ & $ \|\omega - \omega_{h}\|$ \\
\hline
1/4   &  3.8253e-03  &  1.3417e-01   &  1.3164e-01 \\
\hline
1/8   &  5.0314e-04  &  3.5025e-02   &  3.3567e-02 \\
\hline
1/16  &  6.7850e-05  &  9.5850e-03   &  8.4467e-03 \\
\hline
order     & 2.914 & 1.904 & 1.981   
\\ \hline
\hline
\end{tabular}
\end{center}
\end{table}
From Table \ref{EX-k-0-1}, we can see that the mixed finite element method is of second order convergence rate for the variables $\curl\mu$ and $\omega$, and is of third order convergence rate for $\mu$. 
All these variables have optimal convergence order.

\subsection{The case $k = 1$} 
The Hodge wave equation is: Find $\sigma \in H_{0}(\curl)$, $\bs \mu \in H_{0}(\div)$ and $\omega \in L_{0}^{2}(\Omega)$ such that
\begin{align*}
(\sigma_{t},\tau) - (\bs \mu,\curl\tau) & = 0\qquad\forall~~\tau \in H_{0}(\curl),\\
(\bs \mu_{t},\bs v) + (\curl\sigma,\bs v) + (\omega,\div\bs v) & = (\bs f,\bs v)\qquad\forall~~\bs v \in H_{0}(\div) ,\\
(\omega_{t},\phi) - (\div\bs\mu,\phi) & = 0\qquad\forall~~\phi \in L_{0}^{2}(\Omega).
\end{align*}
This formulation can be viewed as the mixed method for the time-harmonic Maxwell's equations with divergence free constrain on both $\bs \mu$ and $\bs f$. The formulation  can  also be viewed as the mixed method for the elastic wave equation. We use continuous piecewise quadratic polynomial to discrete $\sigma$, use $RT_1$ element \cite{Raviart1977A} to discrete $\mu$ and use discontinuous piecewise linear polynomial to discrete $\omega$.
Firstly, we choose the exact solutions as
\begin{align}\label{ex:sol1}
\sigma(x,y,t) & = 2 e^{-t} \left(\pi\sin^{2}(\pi x)\sin(\pi y)\cos(\pi y)  - x (x-1)(2 x-1) y^{2}(y-1)^{2} \right)\\
\bs \mu(x,y,t) & = e^{-t} \begin{pmatrix} \sin^{2}(\pi x)\sin^{2}(\pi y)  \\
x^{2}(x-1)^{2}y^{2}(y-1)^{2}\end{pmatrix},\\
\label{ex:sol3}
\begin{split}
\omega(x,y,t) & = -2 e^{-t}\left(\pi\sin(\pi x)\cos(\pi x)\sin^{2}(\pi y)\right. \\
&\quad \left. + x^{2}(x-1)^{2}y(y-1)(2y-1) \right),
\end{split}
\end{align}
with initial conditions
$\sigma_{0} = \sigma(x,y,0)$, $\bs\mu_{0} = \bs \mu(x,y,0)$ and $\omega_{0} = \omega(x,y,0)$. The numerical results are listed in Table \ref{EX1-1}. We also test the long time robustness of our algorithm, the numerical results are listed in Table \ref{EX1-3}. 

\begin{table}[h!]
\footnotesize
\begin{center}
\caption{Errors and convergence orders in various norms with $\dt = 0.0001$ at $t = 0.0004$.}
\label{EX1-1}
\begin{tabular}{c|c|c|c|c|c}
\hline
\hline  $h$ & $\|\sigma - \sigma_{h}\|$  &$ \|\curl(\sigma - \sigma_{h})\| $ &$ \|\mu - \mu_{h}\|$ & $\|\div(\mu - \mu_{h})\|$ & $\|\omega - \omega_{h}\|$\\
\hline
1/4   &  4.8563e-02  &  1.6691e+00   &  2.9085e-02    &    0.1391   &  0.1388 \\ 
\hline
1/8   &  6.4968e-03  &  4.4475e-01   &  7.6216e-03    &    0.0364   &  0.0367 \\ 
\hline
1/16  &  8.2584e-04  &  1.1298e-01   &  1.9354e-03    &    0.0093   &  0.0093 \\ 
\hline
order    &  2.949     &  1.942     &    1.950   &  1.950  &   1.9763 \\  
\hline
\hline
\end{tabular}
\end{center}
\end{table}

\begin{table}[h!]
\footnotesize
\begin{center}
\caption{Long time problem with $\dt = 0.1$ and $h = 1/16$.}
\label{EX1-3}
\begin{tabular}{c|c|c|c|c|c}
\hline
\hline  $T$ & $\|\sigma - \sigma_{h}\|$  &$ \|\curl(\sigma - \sigma_{h})\| $ &$ \|\mu - \mu_{h}\|$ & $\|\div(\mu - \mu_{h})\|$ & $\|\omega - \omega_{h}\|$\\
\hline
10   &  5.4138e-04  &  1.5087e-02   &  3.7500e-01    &    1.3603   &  0.2374 \\ 
\hline
30   &  4.8684e-04  &  1.8186e-02   &  3.7502e-01    &    1.3604   &  0.2486 \\ 
\hline
50  &  6.1267e-04  &  1.4339e-02   &  3.7502e-01    &    1.3604   &  0.4158 \\ 
\hline
\hline
\end{tabular}
\end{center}
\end{table}


Then, we chose $\bs f =0$ and the initial conditions $\sigma_{0} = \sigma(x,y,0)$, $\bs\mu_{0} = \bs \mu(x,y,0)$ and $\omega_{0} = \omega(x,y,0)$ with $\sigma$, $\bs\mu$ and $\omega$ defined as in \eqref{ex:sol1} - \eqref{ex:sol3}. We compute the energies $\|\bs U_{h}^{i}\|$ and $\|\mathcal A_{h}\bs U_{h}^{i}\|$ on different time levels, the numerical results are showing in Fig. \ref{Fig:EX2-1}.


\begin{figure}
\centering
\includegraphics[width=0.80\textwidth]{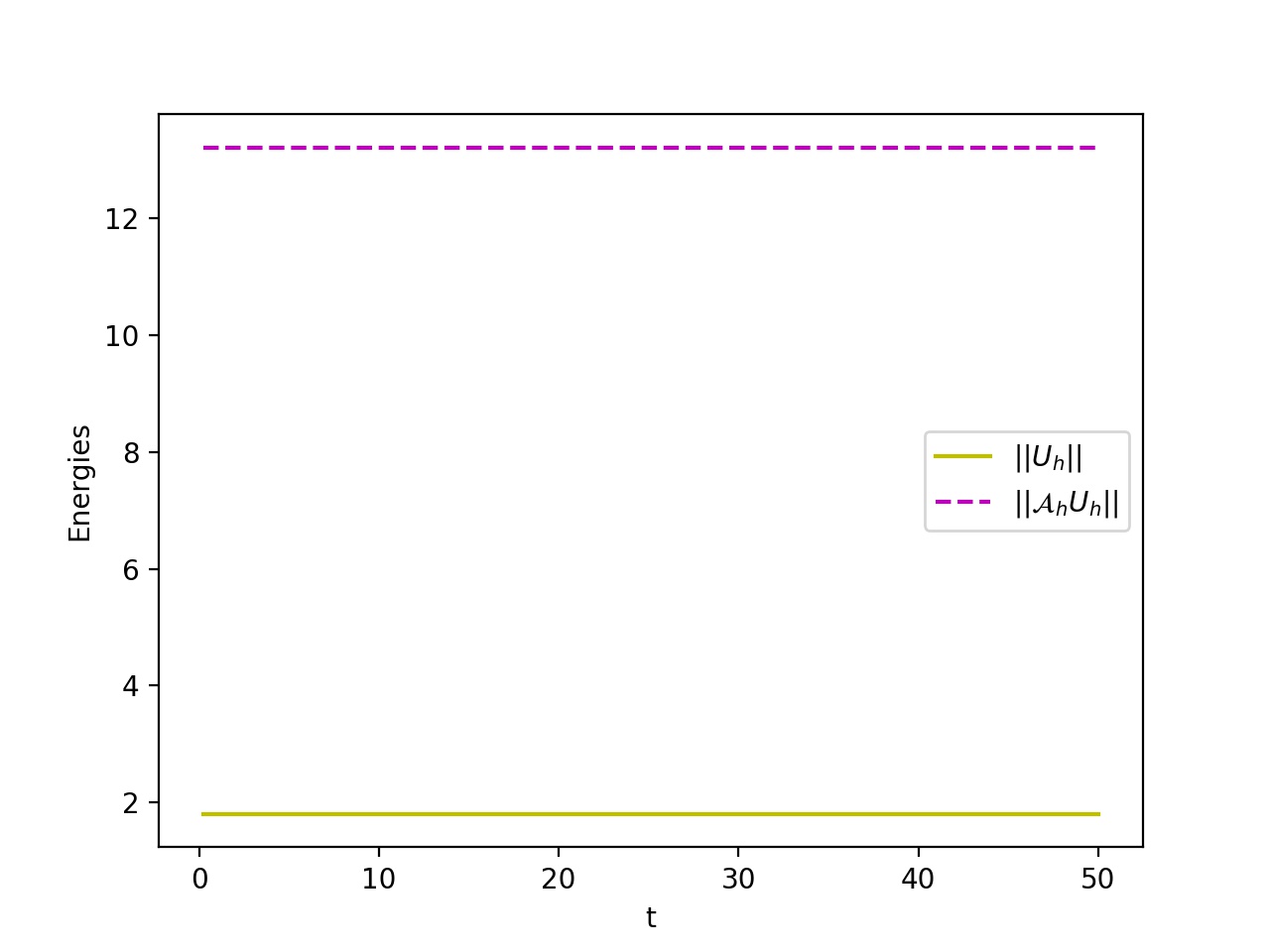}
\caption{Energies $\|\bs U_h\|$ and $\|\mathcal A_h\bs U_h\|$ in different times with $h = 1/16$ and $\dt = 0.25$.}\label{Fig:EX2-1}
\end{figure}

From this example, we have the following observations.
\begin{enumerate}
\item The mix finite element method is of second order convergence rate for the variables $\curl\sigma$, $\mu$, $\div\mu$ and $\omega$, and is of third order convergence rate for the variable $\sigma$. All of these variables have optimal convergence order.
\item From Table \ref{EX1-3}, we can see that the mixed finite element method is robust for long-time problem.
\item From Fig. \ref{Fig:EX2-1}, we can see that the mixed finite element method conserves the energies $\|\bs U_h\|$ and $\|\mathcal A_h\bs U_h\|$ exactly.
\end{enumerate}

\subsection{The case $k = 2$} The Hodge wave equation presents in the $H(\div)$ and $L^{2}$ language reads as (note that $\omega = \dd\mu = 0$): Find $\bs \sigma \in H_{0}(\div)$ and $\mu \in L_{0}^{2}(\Omega)$ such that 
\begin{align}
(\bs \sigma_{t},\bs \tau) + (\mu,\div\bs\tau)  & = 0\qquad\forall~~\bs\tau \in H_{0}(\div), \\
(\mu_{t}, v) - (\div\bs\sigma, v) & = (f, v) \qquad\forall ~~ v \in L_{0}^{2}(\Omega).
\end{align}
This formulation is the mixed method for acoustic wave equations \cite{Kirby_2014}. We choose the exact solutions as
\begin{align}
\bs\sigma(x,y,t)  & =-\pi e^{-t} \begin{pmatrix} \cos(\pi x)\sin(\pi y) \\ \sin(\pi x)\cos(\pi y) \end{pmatrix},\\
\mu(x,y,t) & = e^{-t}\sin(\pi x) \sin(\pi y),
\end{align}
and pick initial conditions $\bs\sigma_{0} = \bs\sigma(x,y,0)$ and $\mu_{0} = \mu(x,y,0)$. We use $RT_1$ element to discrete $\bs\sigma$ and use discontinuous piecewise linear polynomials to discrete $\mu$, the numerical results are listed in Table \ref{EX-k-2-1}.

\begin{table}[h!]
\footnotesize
\begin{center}
\caption{Errors and convergence orders in various norms with $\dt = 0.0001$ at $t = 0.0004$.}
\label{EX-k-2-1}
\begin{tabular}{c|c|c|c}
\hline
\hline  $h$ & $\|\sigma - \sigma_{h}\|$  &$ \|\div(\sigma - \sigma_{h})\| $ &$ \|\mu - \mu_{h}\|$ \\
\hline
1/4   &  5.6058e-02  &  3.8201e-01   &  1.9350e-02 \\
\hline
1/8   &  1.4002e-02  &  9.6901e-02   &  4.9051e-03 \\
\hline
1/16  &  3.4958e-03  &  2.4360e-02   &  1.2312e-03  \\
\hline
order         & 2.002  & 1.985 & 1.992  
\\ \hline
\hline
\end{tabular}
\end{center}
\end{table}

From Table \ref{EX-k-2-1}, we can see that the mixed finite element method is of second order convergence rate for all the variables.

\section*{Acknowledgments}
We would like to thank Professor Long Chen from University of California at Irvine for valuable discussion and suggestions.


\begin{thebibliography}{10}

\bibitem{Arnold;Falk;Winther2006}
D.~N. Arnold, R.~S. Falk, and R.~Winther.
\newblock Finite element exterior calculus, homological techniques, and
  applications.
\newblock {\em Acta Numerica}, 15:1--155, 2006.

\bibitem{Arnold;Falk;Winther2010}
D.~N. Arnold, R.~S. Falk, and R.~Winther.
\newblock Finite element exterior calculus: from hodge theory to numerical
  stability.
\newblock {\em Bulletin of the American Mathematical Society}, 47(2):281--354,
  2010.
  
  
 \bibitem{Arnold2018}
 D.~N.~Arnold.
 \newblock Finite element exterior calculus.
 \newblock{SIAM}, 2018.
 

\bibitem{Baker1976Error}
G.~A. Baker.
\newblock Error estimates for finite element methods for second order
  hyperbolic equations.
\newblock {\em SIAM Journal on Numerical Analysis}, 13(4):564--576, 1976.

\bibitem{Brezzi;Fortin1991}
F.~Brezzi and M.~Fortin.
\newblock {\em Mixed and hybrid finite element methods}.
\newblock Springer-Verlag, New York, 1991.

\bibitem{Chenwu2017}
L.~Chen and Y.~Wu.
\newblock Convergence of adaptive mixed finite element methods for the hodge
  laplacian equation: without harmonic forms.
\newblock {\em SIAM Journal on Numerical Analysis}, 55(6):2905--2929, 2017.

\bibitem{Chen2016MultiGrid}
L.~Chen, Y.~Wu, L.~Zhong, and J.~Zhou.
\newblock Multigrid preconditioners for mixed finite element methods of the
  vector laplacian.
\newblock {\em Journal of Scientific Computing}, 77:101--128, 2018.


\bibitem{Christiansen2011}
S.~H. Christiansen, H.~Z. Munthe-Kaas and B. Owren.
\newblock Topics in structure-preserving discretization.
\newblock {\em Acta Numerica}, pp~1-119, 2011.

\bibitem{Chung2009OPTIMAL}
E.~T. Chung and B.~Engquist.
\newblock Optimal discontinuous galerkin methods for the acoustic wave equation
  in higher dimensions.
\newblock {\em SIAM Journal on Numerical Analysis}, 47(5):3820--3848, 2009.


\bibitem{Costabel2010a}
M. Costabel and A. McIntosh.
\newblock On Bogovski\u{l} and regularized Poincar\'e integral operators for de Rahm complexes on Lipschitz domains.
\newblock {\em Math. Z.}, 265: 297--320, 2010.

\bibitem{Cowsat1990A}
L.~C. Cowsat, T.~F. Dupont, and M.~F. Wheeler.
\newblock A priori estimates for mixed finite element methods for the wave
  equation.
\newblock {\em Computer Methods in Applied Mechanics and Engineering},
  82(1-3):205--222, 1990.


\bibitem{Demkowiz2000}
L. Demkowicz, P. Monk, L. Vardapetyan, and W. Rachowicz.
\newblock De Rham diagram for $hp$ finite element spaces.
\newblock {\em Math. Comput. Applications},
39(7-8):29-38, 2000.

\bibitem{Demkowiz2003}
L. Demkowicz, I. Babu\v{s}ka.
\newblock Optimal $p$ interpolation error estimates for edge finite elements of variable order in 2D.
\newblock {\em SIAM J. Numer. Anal.}, 41(4):1195--1208, 2003.

  
  \bibitem{Demkowiz2005}
  L. Demkowicz and A. Buffa.
  \newblock $H^1$, $H(\curl)$ and $H(\div)$-conforming projection-based interpolation in three dimensions Quasi-optimal $p$-interpolation estimates.
  \newblock {\em Comput. Methods Appl. Mech. Engrg}, 194: 267 -- 296, 2005.
  


\bibitem{Dupont1973L2}
T.~Dupont.
\newblock L2-estimates for Galerkin methods for second order hyperbolic
  equations.
\newblock {\em SIAM Journal on Numerical Analysis}, 10(5):880--889, 1973.

\bibitem{French1996A}
D.~French and T.~Peterson.
\newblock A continuous space-time finite element method for the wave equation.
\newblock {\em Mathematics of Computation}, 65(214):491--506, 1996.

\bibitem{geveci1988application}
T.~Geveci.
\newblock On the application of mixed finite element methods to the wave
  equations.
\newblock {\em ESAIM: Mathematical Modelling and Numerical Analysis},
  22(2):243--250, 1988.

\bibitem{Girault;Raviart2012}
V.~Girault and P.-A. Raviart.
\newblock {\em Finite element methods for Navier-Stokes equations: theory and
  algorithms}, volume~5.
\newblock Springer Science \& Business Media, 2012.

\bibitem{Glowinski2006A}
R.~Glowinski and T.~Rossi.
\newblock A mixed formulation and exact controllability approach for the
  computation of the periodic solutions of the scalar wave equation. (i):
  Controllability problem formulation and related iterative solution.
\newblock {\em Academie Des Sciences Comptes Rendus Mathematique},
  343(7):493--498, 2006.
  
  
 \bibitem{Griesmaier2014}
 R.~Griesmaier and P.~Monk.
 \newblock Discretization of the wave equation using continuous elements in time and a hybridizable discontinuous Galerkin method in space.
 \newblock{J Sci Comput}, 58: 472-498. 

\bibitem{Jenkins2007Numerical}
E.~W. Jenkins.
\newblock Numerical solution of the acoustic wave equation using
  raviart–thomas elements.
\newblock {\em Journal of Computational and Applied Mathematics},
  206(1):420--431, 2007.

\bibitem{Kirby_2014}
R.~C. Kirby and T.~T. Kieu.
\newblock Symplectic-mixed finite element approximation of linear acoustic wave
  equations.
\newblock {\em Numerische Mathematik}, 130(2):257--291, Oct 2014.

\bibitem{Mitrea;Mitrea;Taylor2001}
D.~Mitrea, M.~Mitrea, and M.~Taylor.
\newblock {\em Layer potentials, the Hodge Laplacian, and global boundary
  problems in nonsmooth Riemannian manifolds}, volume 713.
\newblock American Mathematical Soc., 2001.

\bibitem{Mitrea2001}
M.~Mitrea.
\newblock Dirichlet integrals and gaffney-friedrichs inequalities in convex
  domains.
\newblock {\em Forum Math}, 13:531--567, 2001.

\bibitem{Oden1989}
J.~T. Oden, L. Demkowicz, W. Rachowicz, and T.~A. Westermann.
\newblock Towards a universal $h$-$p$ adaptive finite element strategy, Part2. A posteriori error estimation.
\newblock {\em Comput. Methods Appl. Meth. Eng.}, 77: 113--180, 1989.


\bibitem{Quenneville-Belair2015}
V.~Quenneville-B\'elair.
\newblock A new approach to finite element simulations of general relativity.
\newblock {\em Ph.~D.~thesis}, University of Minnesota, 2015.

\bibitem{Raviart1977A}
P.~A. Raviart and J.~M. Thomas.
\newblock A mixed finite element method for 2-nd order elliptic problems.
\newblock {\em Mathematical aspects of finite element methods}, pages 292--315,
  1977.

\bibitem{Rivi2003A}
B.~Rivi\'ere and M.~F. Wheeler.
\newblock A priori error estimates for mixed finite element approximations of
  the acoustic wave equation.
\newblock {\em SIAM Journal on Numerical Analysis}, 40(5):1698--1715, 2002.
\end{thebibliography}

\end{document}